\documentclass[a4paper,fleqn]{cas-sc}

\usepackage[numbers]{natbib}

\usepackage{graphicx} 
\usepackage{xcolor}
\usepackage{tikz}
\usepackage{esint}
\usepackage{amsmath}

\def\Yint#1{\mathchoice
{\YYint\displaystyle\textstyle{#1}}%
{\YYint\textstyle\scriptstyle{#1}}%
{\YYint\scriptstyle\scriptscriptstyle{#1}}%
{\YYint\scriptscriptstyle\scriptscriptstyle{#1}}%
\!\iint}

\def\YYint#1#2#3{{%
\setbox0=\hbox{$#1{#2#3}{\iint}$}%
\vcenter{\hbox{$#2#3$}}\kern-.50\wd0}}

\def\longdash{-\mkern-9.5mu-}
\def\tiltlongdash{\rotatebox[origin=c]{18}{$\longdash$}}

\def\fiint{\Yint\tiltlongdash}

\newtheorem{theorem}{Theorem}[section]
\newtheorem{lemma}[theorem]{Lemma}
\newtheorem{proposition}[theorem]{Proposition}
\newtheorem{definition}[theorem]{Definition}
\newtheorem{corollary}[theorem]{Corollary}
\newtheorem{remark}[theorem]{Remark}

\newproof{proof}{Proof}

\begin{document}
\let\WriteBookmarks\relax
\def\floatpagepagefraction{1}
\def\textpagefraction{.001}

\shorttitle{Interior regularity of parabolic mixed local-nonlocal equations}    

\shortauthors{S. Junquera and L. M. Del Pezzo}  

\title [mode = title]{Interior regularity of mixed local-nonlocal parabolic semilinear equations}  
%

\author[1]{Sergio Junquera}[orcid=0009-0004-2759-2752]

\cormark[1]

\ead{sejunque@ucm.es}

\credit{Conceptualization, Methodology, Formal analysis, Writing - Original Draft, Writing - Review and Editing}

\affiliation[1]{organization={Departamento de Análisis Matemático y Matemática Aplicada, Universidad Complutense de Madrid},
			addressline={Plaza de Ciencias 3, Ciudad Universitaria},
            city={Madrid},
            postcode={28040}, 
            state={Madrid},
            country={Spain}}

\author[2,3]{Leandro M. Del Pezzo}

\ead{leandro.delpezzo@fcea.edu.uy}

\credit{Conceptualization, Methodology, Formal analysis, Writing - Review and Editing}

\affiliation[2]{organization={Instituto de Estadística (IESTA), Departamento de Métodos Cuantitativos, Facultad de Ciencias Económicas y de Administración, Universidad de la República},
            addressline={Av. Gonzalo Ramírez 1926}, 
            city={Montevideo},
            postcode={11200}, 
            state={Montevideo},
            country={Uruguay}}

\affiliation[3]{organization={PEDECIBA Matemática},
            addressline={Iguá 4225}, 
            city={Montevideo},
            postcode={11400}, 
            state={Montevideo},
            country={Uruguay}}

\cortext[1]{Corresponding author}

\begin{abstract}
In this paper we prove the existence, uniqueness and regularity of classical solutions \break of a semilinear parabolic equation with a mixed local and nonlocal diffusion operator \break $\mathcal{L} = -(-\Delta)^s + \Delta$ and Dirichlet boundary conditions. Here, $(-\Delta)^s$ is the integral fractional laplacian and $\Delta$ is the classic local laplacian. We then study the interior regularity of said solutions and conclude that they are Hölder continuous in both space and time, and they are $C^{2,\alpha}_{loc}$ in space for all positive times.
\end{abstract}


\begin{keywords}
Nonlocal diffusion \sep Fractional laplacian \sep Mixed operator \sep Interior regularity \sep Hölder continuity
\end{keywords}

\maketitle

\section{Introduction} \label{sec.introduction}

In this paper we will study the interior regularity of solutions of the following nonlinear nonlocal parabolic dynamical system with Dirichlet boundary conditions:
\begin{equation} \label{1.1}
    \begin{cases}
        \displaystyle u_t(x,t)= \mathcal{L} u(x,t)+f(u(x,t)), & (x,t)\in\Omega\times(0,T), \\
        u(x,0)=u_0(x), & x\in\Omega,\\
        u(x,t) = 0, & (x,t)\in\mathbb{R}^n \setminus \Omega \times [0,T),
    \end{cases}
    \end{equation}
    where we consider that $T\in(0,\infty]$ is the maximal existence time of the solution and $\Omega \subset \mathbb{R}^n$ is an open bounded Lipschitz domain. The reaction term is a nonnegative function $f:\mathcal{I} \to \mathbb{R}^+$, where $\mathcal{I} = [0,\alpha)$ for some $\alpha>0$ and such that $f\in C^1_{loc}(\mathcal{I})$ and $f$ is increasing. The initial data $u_0$ lies in the space $H_0^1(\Omega) \cap L^\infty(\Omega)$ and it satisfies $u_0(x) \in \mathcal{I}$ a.e. $x\in\Omega$. The diffusion operator $\mathcal{L}$ is a combination of a nonlocal operator (the fractional laplacian) and a local operator (the classical laplacian),
    \begin{equation*}
    	\mathcal{L} = -(-\Delta)^s + \Delta,
    \end{equation*}
    with $s\in(0,1)$ and
    \begin{equation} \label{eq.fraclap}
    	(-\Delta)^s \psi(x) = C_{n,s} \text{P.V.} \int_{\mathbb{R}^n} \frac{\psi(x)-\psi(y)}{|x-y|^{n+2s}} dy,
    \end{equation}
    where $C_{n,s}$ is a normalization constant that depends on $n$ and $s$ and P.V. denotes the principal value integral. This principal value is used to avoid the singularity of the integrand at the diagonal $x=y$, and is defined as
	\begin{equation*}
		\text{P.V.} \int_{\mathbb{R}^n} F(x,y) dy = \lim_{\varepsilon\to 0^+} \int_{\mathbb{R}^n \setminus B_\varepsilon (x)} F(x,y) dy.
	\end{equation*}   
     This kind of operators generate high mathematical interest due to the combination of local and nonlocal mechanisms, as well as due to the lack of scale invariance, see for example \cite{AC2021,BDVV2023,SVWZ2025}. They are also highly interesting operators from an application viewpoint, being able to model diffusion phenomena with different time scales and bimodal power-law distribution processes, see \cite{PV2021}, among others. The main mechanism that differentiates the mixed operator from the purely local or purely fractional laplacian is the combination of different scales, which rises the interesting question of whether one of the scales prevail or both coexist in the end.
    
    In this paper we will work with Dirichlet boundary conditions, which must be imposed in the whole complement of the domain, $\mathbb{R}^n \setminus \Omega$. This is natural in the field of nonlocal operators, since their nature forces us to establish the value of the solution in the whole space $\mathbb{R}^n$, see \cite{AMRT2010, BFRW1997, BV2016, F2003}. In our case, this is needed so that expression \eqref{eq.fraclap} makes sense.
    
    Notice that the semilinear reaction term in \eqref{1.1} is a locally Lipschitz function with a restricted domain. This was inspired by the problem the authors strived to study in \cite{JP2026}, where
    \begin{equation*}
    	f(u(x,t)) = \lambda (1-u(x,t))^{-p}.
    \end{equation*}
    This reaction term explodes when $\|u(\cdot,t)\|_{L^\infty(\Omega)}\nearrow 1$ as $t\to T$. When this happens, the equation stops making sense and we say that the solution presents quenching, see for example \cite{L1985} for more information. Although this quenching system was our main objective and the results in this paper are formulated in a somewhat particular case that simplifies some arguments, those could also be easily adapted to the most general case in which $\mathcal{I}=(\alpha_1,\alpha_2)$ with a little bit more work.
    
   	The study of partial differential equations involving the mixed local and nonlocal operator $\mathcal{L}$ has been attracting considerable attention, mainly in the elliptic framework. Some of those studies deal with existence and non-existence of solutions \cite{AC2021, BDVV2025}, eigenvalue problems \cite{BDVV2023,DFR2019} or symmetry of solutions \cite{BVDV2021}, while others, the ones we are most interested in, study regularity of solutions, such as \cite{SVWZ2025}.
   	
   	In contrast with the elliptic case, parabolic evolution equations with this mixed local and nonlocal operator have not been studied as deeply, although some very interesting works already exist in this context. For example, in \cite{D2024} they study the interior $C^\alpha$ regularity of the homogeneous parabolic problem: in \cite{BPV2025} they deal with the existence and non-existence of solutions for the Cauchy problem with a superlinear reaction term, and in \cite{DF2025} they obtain the Fujita exponent and blow-up rate of the solutions for that same problem.
   	
   	The main purpose of this article is to establish interior $C^{2,\alpha}$ regularity for the semilinear mixed local-nonlocal parabolic equation \eqref{1.1}. We will also study and obtain a spatial $L^2-L^\infty$ estimate of the mixed local-nonlocal parabolic equation with a zeroth order term, namely
   	\begin{equation} \label{eq.zeroth.order}
    \begin{cases}
        \displaystyle v_t(x,t)= \mathcal{L} v(x,t)+a(x,t)v(x,t), & (x,t)\in\Omega\times(0,T^\prime), \\
        v(x,0)=v_0(x), & x\in\Omega,\\
        v(x,t) = 0, & (x,t)\in\mathbb{R}^n \setminus \Omega \times [0,T^\prime),
    \end{cases}
    \end{equation}
    where we consider that $T^\prime\in(0,\infty]$ is the maximal existence time of the solution and $\Omega \subset \mathbb{R}^n$ is an open bounded Lipschitz domain. The coefficient of the zeroth order term is a function $a\in C_{loc}([0,T^\prime), L^\infty(\Omega))$ and the initial data $v_0$ lies in the space $H_0^1(\Omega)$. This $L^2-L^\infty$ estimate will be a key ingredient on tackling the main objective of this paper.
    
    \subsection{Main results}
    
    Our first objective is to prove that system \eqref{1.1} admits classical solutions in the $L^2(\Omega)$ framework. We will do so via semigroup theory and find the following result
    
    \begin{proposition} \label{prop.semigroup.regularity}
    	For every $u_0\in H_0^1(\Omega) \cap L^\infty(\Omega)$ such that $\|u_0\|_{L^\infty(\Omega)} < \alpha$, there exists some $T=T(u_0)>0$ such that system \eqref{1.1} admits a unique classical solution $u \in C((0,T), D(\mathcal{L})) \cap C([0,T),H_0^1(\Omega)) \cap C^1((0,T),L^2(\Omega))$. 
    	
    	Furthermore, $\limsup_{t\to t_0} \|u(\cdot,t)\|_{L^\infty(\Omega)} = \alpha$ if and only if $t_0=T$.
    \end{proposition}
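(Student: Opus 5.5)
We would treat \eqref{1.1} as an abstract semilinear Cauchy problem $u' = \mathcal{L}u + F(u)$ in $X = L^2(\Omega)$, with $F(u)=f(u)$ acting as a Nemytskii operator. The first step is to realize $-\mathcal{L}$ as the nonnegative self-adjoint operator associated with the closed, coercive, symmetric bilinear form
\begin{equation*}
\mathcal{E}(u,v) = \int_\Omega \nabla u\cdot\nabla v\,dx + \frac{C_{n,s}}{2}\iint_{\mathbb{R}^{2n}} \frac{(u(x)-u(y))(v(x)-v(y))}{|x-y|^{n+2s}}\,dx\,dy
\end{equation*}
on the form domain $H_0^1(\Omega)$, where functions are extended by zero outside $\Omega$. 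Since $\Omega$ is bounded and Lipschitz, the Gagliardo seminorm is controlled by $\|\nabla u\|_{L^2}$. Hence the form domain is exactly $H_0^1(\Omega)$ with an equivalent norm, and $D(\mathcal{L})=\{u\in H_0^1(\Omega): \mathcal{L}u\in L^2(\Omega)\}$. Then $\mathcal{L}$ generates an analytic contraction semigroup $e^{t\mathcal{L}}$ on $L^2(\Omega)$, and $D((-\mathcal{L})^{1/2}) = H_0^1(\Omega)$. Because $\mathcal{E}$ is a Dirichlet form (normal contractions operate on it), the semigroup is also positivity preserving and $L^\infty$-contractive. This is the key structural fact we will exploit.

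Next, we truncate the nonlinearity. Fix $\beta$ with $\|u_0\|_{L^\infty}<\beta<\alpha$, and let $f_\beta$ be a globally Lipschitz $C^1$ extension of $f|_{[0,\beta]}$ to $\mathbb{R}$, for instance constant outside $[0,\beta]$. Then $F_\beta$ is globally Lipschitz on $L^2(\Omega)$. The mild formulation
\begin{equation*}
u(t)=e^{t\mathcal{L}}u_0+\int_0^t e^{(t-\tau)\mathcal{L}}f_\beta(u(\tau))\,d\tau
\end{equation*}
has a unique solution in $C([0,T_1],H_0^1(\Omega))$, obtained by a contraction argument in $H_0^1$. We use the estimate $\|e^{t\mathcal{L}}\|_{L^2\to H_0^1}\le Ct^{-1/2}$, which is integrable, and note that $f_\beta$ is Lipschitz on $L^2$, so the Duhamel term lands in $H_0^1$.

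For the regularity we follow Pazy's argument for analytic semigroups. First, $F_\beta(u)$ is Hölder continuous in time with values in $L^2$ on $[\varepsilon,T_1]$. This holds because $u\in C^{\theta}((0,T_1],L^2)$, which follows from analyticity and the fact that $u_0\in D((-\mathcal{L})^{1/2})$. From this Hölder continuity we get $u\in C((0,T_1),D(\mathcal{L}))\cap C^1((0,T_1),L^2)$, so $u$ is a classical solution. Continuity at $t=0$ in $H_0^1$ comes from $u_0\in H_0^1=D((-\mathcal{L})^{1/2})$.

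We must then check that $u$ stays in $[0,\beta]$ for short times, so that $f_\beta(u)=f(u)$ and $u$ actually solves \eqref{1.1}. Nonnegativity follows by testing with $u^-$: we use $\mathcal{E}(u,u^-)\le -\mathcal{E}(u^-,u^-)$ together with $f_\beta\ge0$, or alternatively the positivity of the semigroup in the mild formula. For the upper bound we use the $L^\infty$-contractivity:
\begin{equation*}
\|u(t)\|_\infty\le\|u_0\|_\infty+t\sup_{[0,\beta]} f_\beta .
\end{equation*}
Hence $\|u(t)\|_\infty<\beta$ for $t<(\beta-\|u_0\|_\infty)/\sup f_\beta$. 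Uniqueness in the stated class follows from the same truncation and Gronwall's inequality in $L^2$, since any classical solution with values in $\mathcal{I}$ is locally bounded by some $\beta<\alpha$.

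Finally, we extend the solution. Let $T$ be the supremum of the times up to which a solution with $\sup_{[0,t]}\|u\|_\infty<\alpha$ exists. If $T<\infty$ and $\limsup_{t\to T}\|u(t)\|_\infty<\alpha$, then $u$ stays below some $\beta<\alpha$. The local existence time obtained above depends only on $\beta-\|u(t)\|_\infty$ and on bounds for $f$ on $[0,\beta]$. Restarting from $u(t)$ near $T$, which lies in $H_0^1\cap L^\infty$, therefore contradicts maximality. Conversely, for $t_0<T$ the solution is continuous into $L^2$ with uniform $L^\infty$ bounds on $[0,t_0+\delta]$, so the $\limsup$ at $t_0$ is below $\alpha$. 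The case $T=\infty$ is interpreted in the same way. The main obstacle we expect is the rigorous verification of the $L^\infty$ and positivity properties of $e^{t\mathcal{L}}$ for the mixed operator with exterior Dirichlet condition, that is, the Dirichlet-form (Beurling–Deny) criteria. We would check these directly on $\mathcal{E}$, using $\mathcal{E}(\min(u,1),u-\min(u,1))\ge0$ for both the local and nonlocal parts.
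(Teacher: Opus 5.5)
Your proof is correct and follows the paper's overall structure: the form $\mathcal{E}$ on $H_0^1(\Omega)$, the analytic semigroup, a globally Lipschitz truncation of $f$, classical regularity from analytic-semigroup theory, and continuation. The difference is in how you keep the truncated solution inside the range where the truncation is inactive.

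\textbf{The paper's route.} It proves order preservation via Picard iterations: a comparison lemma in the initial data and a super/subsolution comparison lemma. Both use positivity of $e^{\mathcal{L}t}$ and the monotonicity of $f_\varepsilon$. It then traps $u_\varepsilon$ between the subsolution $0$ and the spatially constant supersolution $w(t)$, which solves $w'=f_\varepsilon(w)$ with $w(0)=\|u_0\|_\infty$.

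\textbf{Your route.} You use the sub-Markovian property of $e^{t\mathcal{L}}$ (positivity plus $L^\infty$-contractivity, from the Beurling--Deny criteria, which you sketch correctly via $\mathcal{E}(u\wedge 1,(u-1)^+)\ge 0$). You apply it directly in Duhamel's formula and get $0\le u(t)\le\|u_0\|_\infty+t f(\beta)$.

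\textbf{What each buys.} Your argument is shorter and needs no monotonicity of $f$, only $f\ge 0$ and local Lipschitz continuity. It also avoids a delicate point in the paper's comparison lemma. There the cut-off supersolution $w(t)\chi_\Omega$ is not in $H_0^1(\Omega)$, so the variation-of-constants formula the paper invokes for it is not directly covered by semigroup theory. In exchange, the paper's route gives stand-alone comparison principles, recorded in a later Remark. It also gives the sharper bound $\|u(t)\|_\infty\le w(t)$, so $T$ is at least the quenching time of the spatially homogeneous ODE. Your linear bound is cruder per step but gives the same maximal $T$ after continuation. Combining sub-Markovianity with the monotonicity of $f$ would recover the ODE bound as well.

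\textbf{Two small points.}
\begin{itemize}
\item The constant extension of $f|_{[0,\beta]}$ is Lipschitz but generally not $C^1$. This is harmless, since only the Lipschitz property is used.
\item Your claim that any classical solution with values in $\mathcal{I}$ is locally bounded by some $\beta<\alpha$ is not automatic in the stated class, which has no time-continuity into $L^\infty$. Uniqueness should therefore be read, as it implicitly is in the paper, among solutions with $\sup_{\tau\in[0,t]}\|u(\cdot,\tau)\|_{L^\infty(\Omega)}<\alpha$ for $t<T$.
\end{itemize}
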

    
    Then, we will establish some initial interior regularity on the solution in both space and time.
    
    \begin{theorem} \label{teo.holder}
    	Let $u$ be the unique classical solution of system \eqref{1.1} and some $\tilde{T} \in (0,T)$. Fix some $(x_0,t_0) \in \Omega \times (0,\tilde{T})$ and define, for any $\rho>0$, the parabolic cylinder $Q_\rho = B_\rho(x_0) \times [t_0-\rho^2,t_0]$. Then, for every $\alpha_0 \in (0,1)$, there exists some $r_\ast = r_\ast(n,s,\alpha_0,\tilde{T}) \in (0,1)$ such that $\overline{B_\rho}(x_0) \subset \Omega$ and $[t_0-\rho^2,t_0] \subset (0,\tilde{T})$ and, for every $0<r\leq r_\ast$, it holds
    	\begin{equation*}
    		\fiint_{Q_r} |u(x,t)-(u)_{Q_r}|^2 dx dt \leq c r^{2\alpha_0},
    	\end{equation*}
    	where $c=c(n,s,\alpha_0,\|f(u)\|_{L^\infty(\Omega \times [0,\tilde{T}])}, \|u\|_{L^2(\Omega \times[0,\tilde{T})}, \|u\|_{L^\infty([0,\tilde{T}], L^2(\Omega))})$. In particular, the solution $u$ is locally Hölder continuous of exponent $\alpha_0$ in both space and time.
    \end{theorem}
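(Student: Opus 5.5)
We prove the Campanato-type estimate by perturbation (freezing) against a caloric comparison function, followed by an iteration lemma. The Hölder continuity then follows from Campanato's characterization of Hölder spaces with respect to the parabolic metric.

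Since $\tilde T<T$, Proposition \ref{prop.semigroup.regularity} shows that $\|u\|_{L^\infty(\Omega\times[0,\tilde T])}<\alpha$ stays away from $\alpha$. Hence $g:=f(u)\in L^\infty(\Omega\times[0,\tilde T])$, and $u$ solves the linear problem $u_t-\Delta u=-(-\Delta)^s u+g$ in $\Omega\times(0,\tilde T)$. Choose $r_\ast$ so small that $Q_{2r_\ast}$ lies compactly inside $\Omega\times(0,\tilde T)$.

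\textbf{Step 1 (comparison).} For $Q_\rho\subset Q_{r_\ast}$, let $w$ be the solution of the heat equation $w_t=\Delta w$ in $Q_\rho$ with $w=u$ on the parabolic boundary of $Q_\rho$. The classical Campanato estimates for caloric functions give, for $0<r\le\rho$,
\begin{equation*}
\iint_{Q_r}|w-(w)_{Q_r}|^2\,dx\,dt\le C\Big(\frac{r}{\rho}\Big)^{n+4}\iint_{Q_\rho}|w-(w)_{Q_\rho}|^2\,dx\,dt .
\end{equation*}

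\textbf{Step 2 (energy estimate for $v=u-w$).} The difference $v=u-w$ vanishes on the parabolic boundary of $Q_\rho$ and satisfies $v_t-\Delta v=g-(-\Delta)^s u$. Testing with $v$ gives
\begin{equation*}
\sup_t\int_{B_\rho}v^2+\iint_{Q_\rho}|\nabla v|^2\le C\iint_{Q_\rho}|g|\,|v|+\Big|\int\!\!\int (-\Delta)^s u\, v\Big|.
\end{equation*}
The reaction term is bounded by $\|g\|_\infty\rho^{(n+2)/2}\|v\|_{L^2(Q_\rho)}$. For the nonlocal term, write $(-\Delta)^{s}u\,v$ in Gagliardo form, $\frac{C_{n,s}}{2}\iint\!\!\iint \frac{(u(x)-u(y))(v(x)-v(y))}{|x-y|^{n+2s}}$. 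Split it into the near part ($y\in B_{2\rho}$) and the tail. In the near part, $[u]_{H^s}$ is controlled locally by $\|\nabla u\|_{L^2}$, and $\|v\|_{H^s}\lesssim \rho^{1-s}\|\nabla v\|_{L^2}$ by Poincaré interpolation. The tail is bounded by $\rho^{-2s}\big(\|u\|_{L^\infty_tL^2}+\cdots\big)$ times $\|v\|_{L^1}$. Combining this with Poincaré on $B_\rho$ for $v$ (which gives a factor $\rho^2$) yields
\begin{equation*}
\iint_{Q_\rho}|v|^2\le C\rho^{n+2+2\beta}
\end{equation*}
for some $\beta>0$ depending on $s$ (essentially $\beta=\min\{1,2-2s\}$-type gains). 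The constant $C$ depends on the norms listed in the statement together with a local energy (Caccioppoli) bound on $\|\nabla u\|_{L^2(Q_{2\rho})}$, which is itself obtained by testing the equation of $u$ with $u\eta^2$.

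\textbf{Step 3 (iteration).} Write $\phi(r)=\iint_{Q_r}|u-(u)_{Q_r}|^2$. Steps 1 and 2 give
\begin{equation*}
\phi(r)\le C\Big(\frac r\rho\Big)^{n+4}\phi(\rho)+C\rho^{n+2+2\beta}.
\end{equation*}
The standard iteration lemma (Giaquinta) then yields $\phi(r)\le C r^{n+2+2\min\{\beta,\alpha_0\}}$. This reaches every $\alpha_0<1$ only if $\beta$ can be pushed to $1$. If $\beta<1$, we bootstrap: the first pass gives $u\in C^{\beta}$ locally, which improves the estimate of the near nonlocal part, since $|(-\Delta)^s u|$ becomes bounded once $u\in C^{2s+\epsilon}$, or at least its contribution gains a power. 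Each pass raises the exponent, and finitely many passes give any $\alpha_0<1$, with $r_\ast$ shrinking at each stage. Dividing $\phi(r)$ by $|Q_r|\sim r^{n+2}$ gives the stated averaged inequality, and Campanato's theorem in the parabolic metric gives $C^{\alpha_0}$ in space and $C^{\alpha_0/2}$ (hence locally $\alpha_0$-type) in time.

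\textbf{Main obstacle.} The hardest part is Step 2: controlling the nonlocal term $(-\Delta)^s u$ tested against $v$ with a positive power gain in $\rho$, and handling its tail using only the global $L^2$ norms of $u$ listed in the statement. The bootstrapping needed to reach every $\alpha_0<1$ must also be organized carefully so that constants depend only on the stated quantities. As a first attempt, I will exploit $s<1$: the fractional term is of lower order than $\Delta$, so it behaves like a perturbation of size $\rho^{2-2s}$ relative to the local energy.
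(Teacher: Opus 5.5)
Your route differs from the paper's in how the nonlocal tail is handled. It can be made to work, but two steps need repair.

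\textbf{Comparison.} The paper never bounds the tail crudely at small scales. It iterates the combined quantity $r^{1-\varepsilon/2}I_1(r)+I_2(r)+r$, where $I_1$ is the $L^2_t$-tail of $u-(u)_{Q_r}$, using the tail-decay Lemma \ref{lemma.nonlocaldecay}. The comparison error $u-w$ enters only as $r^{\varepsilon}\tau^{-n-2}$ times that same quantity. The choice of $\tau_0, r_\ast$ plus the truncated maximal-function argument then gives every $\alpha_0<1$ in one pass. The global $L^2$ norms are used only once, at the fixed scale $r_\ast$, which is why the constant depends only on them.

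You instead bound the tail at every scale, turn it into an additive error $C\rho^{n+2+2\beta}$, apply Giaquinta's iteration lemma, and bootstrap. This is more elementary: no tail-decay lemma and no maximal-function trick. The price is that you need pointwise control of $u$ at every scale and about $1/(2-2s)$ passes, with radii and constants deteriorating as $s\to1$. Your constant also depends on $\|u\|_{L^\infty(\Omega\times[0,\tilde T])}$, which is not in the statement's list.

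\textbf{Repairs.} (i) Contrary to your ``Main obstacle'' plan, the per-scale tail cannot be controlled by the $L^2$ norms. One only has $\int_{\mathbb{R}^n\setminus B_{2\rho}}|u(y,t)|\,|x_0-y|^{-n-2s}dy\le C\rho^{-n/2-2s}\|u(\cdot,t)\|_{L^2(\Omega)}$, not $C\rho^{-2s}\|u\|_{L^\infty_tL^2}$. After Young and Poincar\'e this contributes $C\rho^{6-4s}$ to $\iint_{Q_\rho}|v|^2$, i.e. $2\beta=4-4s-n$, which is negative for every $n\ge 4$.

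Use instead $M=\|u\|_{L^\infty(\Omega\times[0,\tilde T])}<\infty$, available from Proposition \ref{prop.semigroup.regularity}. The far part is then at most $CM\rho^{-2s}\|v(\cdot,t)\|_{L^1(B_\rho)}$. Your Caccioppoli step gives $\iint_{Q_{2\rho}}|\nabla u|^2\le C\rho^{n}$, with $C$ depending on $M$ and $\|f(u)\|_{L^\infty}$. You get $\iint_{Q_\rho}|v|^2\le C\rho^{n+6-4s}$, i.e. $\beta=2-2s$, so no bootstrap is needed when $s<1/2$.

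(ii) Your first bootstrap mechanism is unavailable. A bootstrap is needed only when $s\ge 1/2$, and then $C^{2s+\epsilon}$ means $C^{1,2s-1+\epsilon}$, which an $L^2$-excess Campanato argument (exponents below $1$) never produces.

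What works is the power gain you only allude to. Suppose $u\in C^{\beta}$ in the parabolic metric near $(x_0,t_0)$, with $\beta<2s$. Then:
\begin{itemize}
\item the far part becomes $C(\rho^{\beta-2s}+1)\|v(\cdot,t)\|_{L^1(B_\rho)}$;
\item Caccioppoli with $k=(u)_{Q_{4\rho}}$ gives $\iint_{Q_{2\rho}}|\nabla u|^2\le C\rho^{n+2\beta}$.
\end{itemize}
Hence $\iint_{Q_\rho}|v|^2\le C\rho^{n+2+2\min\{\beta+2-2s,\,2\}}$, so each pass gains $2-2s$ and finitely many passes reach any $\alpha_0<1$.

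Finally, the parabolic Campanato characterization gives $C^{\alpha_0/2}$ in $t$, and this does not imply exponent $\alpha_0$ in time. Drop ``hence locally $\alpha_0$-type''; the statement's ``in both space and time'' should be read in the parabolic metric.
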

    
    Next, we will obtain an $L^2-L^\infty$ estimate in space for system \eqref{eq.zeroth.order}, which will help us in the last stretch of the article.
    
    \begin{theorem} \label{teo.l2.linfty}
    	Let $v$ be the unique classical solution to \eqref{eq.zeroth.order} and fix some $\tilde{T} \in (0,T^\prime)$. Let $(x_0,t_0) \in \Omega \times (0,\tilde{T})$, $\tau \in (0,1)$, $R\in (0,1)$ and $Q_R = B_R(x_0) \times (t_0-R^2,t_0)$, such that $\overline{B_R}(x_0) \subset \Omega$ and $[t_0-R^2,t_0] \subset (0, \tilde{T})$. Then it holds that
    	\begin{equation} \label{eq.L2Linfty}
    		\|v\|_{L^\infty(Q_{\tau R})} \leq  C R^{-\frac{n}{2}} \tau^{-\frac{n}{2}-1} (1-\tau)^{-(\frac{n}{2}+s)(\frac{n}{2}+1)} \|v\|_{L^\infty([t_0-R^2,t_0], L^2(\Omega))},
    	\end{equation}
    	where $C=C(n,s,\|a\|_{L^\infty(\Omega\times[0,\tilde{T}])})$.
    \end{theorem}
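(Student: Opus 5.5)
We will prove the estimate with a De Giorgi iteration applied separately to $v_+$ and $v_-$. Since $-v$ solves the same equation, it suffices to bound $v_+$. For $k>0$ put $w_k=(v-k)_+$. Choose nested radii $\rho_j=\tau R+2^{-j}(1-\tau)R$, cylinders $Q_j=B_{\rho_j}(x_0)\times(t_0-\rho_j^2,t_0)$, and levels $k_j=k(1-2^{-j})$, where $k>0$ is to be fixed at the end.

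\textbf{Step 1: Caccioppoli inequality.} We test the equation with $w_{k}\eta^2$, where $\eta(x,t)$ is a cutoff equal to $1$ on $Q_{j+1}$, supported in $B_{\rho_j}\times(t_0-\rho_j^2,t_0]$, with
\[
|\nabla\eta|\lesssim 2^j/((1-\tau)R),\qquad |\partial_t\eta|\lesssim 4^j/((1-\tau)R)^2 .
\]
This is legitimate because classical solutions lie in $C^1((0,T'),L^2)\cap C((0,T'),D(\mathcal L))$.
\begin{itemize}
\item The local part gives $\|\nabla(w\eta)\|_2^2$ minus the usual $\|w\nabla\eta\|_2^2$ error.
\item The nonlocal part is handled by the standard splitting $(v(x)-v(y))(w\eta^2(x)-w\eta^2(y))$. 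We drop the nonnegative Gagliardo term. The local-in-$B_{\rho_j}$ error is bounded by $\int |\eta(x)-\eta(y)|^2 w^2\,K$, which is $\lesssim ((1-\tau)R)^{-2s}\int w^2$. Since $R<1$ and $(1-\tau)R<1$, this is dominated by $((1-\tau)R)^{-2}\int w^2$.
\item The tail term is $\int w\eta^2(x)\int_{\mathbb R^n\setminus B_{\rho_j}} w(y)|x-y|^{-n-2s}\,dy\,dx$. For $x\in\operatorname{supp}\eta(\cdot,t)$ and $y\notin B_{\rho_j}$, we have $|x-y|\gtrsim 2^{-j}(1-\tau)R\,|y-x_0|/\rho_j$. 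Using Cauchy--Schwarz in $y$ and the fact that $v=0$ outside $\Omega$, the inner integral is at most
\[
C\,2^{j(n+2s)}\,((1-\tau)R)^{-n-2s}\,R^{n/2}\,\|v(\cdot,t)\|_{L^2(\Omega)} .
\]
This is exactly where $\|v\|_{L^\infty_tL^2_x}$ enters, and it produces the $(1-\tau)^{-(n/2+s)}$ factors.
\item The zeroth-order term contributes $\|a\|_\infty\int (w^2+k\,w)\eta^2$. Since $w>0$ forces $v>k$, we have $k\,w\le v\,w$, and this is absorbed.
\end{itemize}
Taking the supremum in $t$, we obtain a bound for $\sup_t\int w^2\eta^2+\iint|\nabla(w\eta)|^2$ by
\[
\frac{C4^j}{((1-\tau)R)^2}\iint_{Q_j} w^2+\frac{C\,2^{j(n+2s)}R^{n/2}M}{((1-\tau)R)^{n+2s}}\iint_{Q_j} w,\qquad M=\|v\|_{L^\infty([t_0-R^2,t_0],L^2(\Omega))}.
\]
The last term is converted to a $w^2$ term by $\iint w_{k_{j+1}}\le (k_{j+1}-k_j)^{-1}\iint w_{k_j}^2$.

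\textbf{Step 2: Parabolic Sobolev and iteration.} We use the embedding $L^\infty L^2\cap L^2H^1\hookrightarrow L^{2(n+2)/n}$ together with Chebyshev's inequality $|\{w_{k_{j+1}}>0\}\cap Q_{j+1}|\le (k_{j+1}-k_j)^{-2}Y_j$, where $Y_j=\iint_{Q_j}w_{k_j}^2$. This gives
\[
Y_{j+1}\le C\,b^j\,A(k)\,Y_j^{1+2/(n+2)},
\]
where $A(k)$ collects the powers of $(1-\tau)R$, $k$, and the term $R^{n/2}M/k$ coming from the tail. By the standard fast-geometric-convergence lemma, $Y_j\to0$ provided $Y_0\le c\,A(k)^{-(n+2)/2}$.

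\textbf{Step 3: Choosing $k$.} We have $Y_0\le R^2M^2$. We choose $k$ to be the maximum of two quantities: a multiple of $R^{-n/2}\tau^{-n/2-1}(\cdots)M$ dictated by the energy term, and the quantity balancing the tail term. This yields
\[
\sup_{Q_{\tau R}}v\le k\lesssim R^{-n/2}\tau^{-n/2-1}(1-\tau)^{-(n/2+s)(n/2+1)}M,
\]
where the exponent $(n/2+s)(n/2+1)$ arises from raising the tail factor $(1-\tau)^{-(n+2s)}$ to the power $(n+2)/4$. A factor $\tau^{-n/2-1}$ naturally arises from normalizing by $|Q_{\tau R}|$.

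\textbf{Main obstacle.} The hard part is the bookkeeping of the tail term so that the precise powers of $\tau$ and $(1-\tau)$ match \eqref{eq.L2Linfty}. The lack of scaling of $\mathcal L$ requires $R<1$ in order to compare the $R^{-2s}$ and $R^{-2}$ contributions. If needed, we will first run the iteration between $\tau R$ and $R$ with the simpler $L^2$-normalized level, and then track the constants.
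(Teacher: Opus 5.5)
Your proposal is correct and uses the same De Giorgi skeleton as the paper: a Caccioppoli inequality for $(v-k)_+$, the parabolic Gagliardo--Nirenberg inequality, the level-gap/Chebyshev conversion, and the fast geometric convergence lemma. The real difference is how you handle the nonlocal tail.

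\textbf{Tail handling: the paper's route versus yours.} The paper follows \cite{FSZ2022} and builds the tail into the level, choosing $\tilde k\ge\frac12\mathrm{Tail}_\infty(v_+,x_0,\tau r,t_0-r^2,t_0)$. The tail contribution then becomes $C2^{(n+2s+1)j}(1-\tau)^{-n-2s}(\tau r)^{-2s}\iint w_j^2$, and the iteration gives the general bound $\|v\|_{L^\infty(Q_{\tau r})}\le\mathrm{Tail}_\infty+C(\cdots)\|v\|_{L^2(Q_r)}$. Only in the last line does it use $v\equiv0$ off $\Omega$ and Cauchy--Schwarz to bound the tail by $C(\tau r)^{-n/2}M$. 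Measuring the tail at radius $\tau r$ is exactly what produces $\tau^{-2s}\le\tau^{-2}$, and hence $\tau^{-n/2-1}$.

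You apply Cauchy--Schwarz at every step instead. You keep the geometric factor $(\rho_j/(2^{-j}(1-\tau)R))^{n+2s}$ together with $\rho_j^{-n/2-2s}$, so only $\rho_j^{n/2}\le R^{n/2}$ survives. The tail then enters the recursion as $R^{n/2}M/k$ and is balanced against the level.

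\textbf{What each route buys.} Your route is more direct and gives a slightly better constant. It has no negative power of $\tau$ at all. With exact balancing, the tail only costs $(1-\tau)^{-(n+2s)(n+2)/(n+6)}$. The cruder choice of first taking $k\ge c_0R^{-n/2}M$ and then imposing the energy condition reproduces the stated $(1-\tau)^{-(n/2+s)(n/2+1)}$. The paper's route gives in exchange a reusable local boundedness estimate, valid for any finite tail.

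\textbf{Details to fix.}
\begin{itemize}
\item The remark ``$kw\le vw$'' absorbs nothing, since $vw=w^2+kw$. The term $\|a\|_\infty k_{j+1}\iint w_{k_{j+1}}\eta^2$ must be handled by the same level-gap conversion you use for the tail. That gives at most $2^{j+1}\|a\|_\infty Y_j$, which is dominated by $4^j((1-\tau)R)^{-2}Y_j$ because $(1-\tau)R<1$. This is how the paper treats its term $\mathcal I_6$.
\item Your distance bound for the tail needs the spatial support of $\eta$ to lie in an intermediate ball such as $B_{(\rho_j+\rho_{j+1})/2}$, not merely in $B_{\rho_j}$.
\item In Step 3, no factor $\tau^{-n/2-1}$ arises in your scheme; normalizing by $|Q_{\tau R}|$ plays no role. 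Since $\tau^{-n/2-1}\ge1$, you can simply insert it.
\item The local energy term forces $k\gtrsim R^{-n/2}(1-\tau)^{-(n+2)/2}M$. This is dominated by $(1-\tau)^{-(n/2+s)(n/2+1)}$ only when $n+2s\ge2$. The paper's final absorption step drops the same $r^{-n/2-1}(1-\tau)^{-(n/2+1)}\|v\|_{L^2(Q_r)}$ term. So this caveat, which matters only for $n=1$, $s<1/2$, is shared with the paper and is not a defect of your route.
\end{itemize}
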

    
    Finally, we will combine these results into the main objective of this article, which is to obtain higher interior regularity in space for solutions of \eqref{1.1}.
    
    \begin{theorem} \label{teo.higherregularity}
    	Let $u$ be the unique classical solution of system \eqref{1.1}. Then for every $\alpha_0 \in (0,1)$, $u_t \in C^{\alpha_0}_{loc}(\Omega \times (0,T))$ and $u(\cdot,t) \in C^{2,\alpha_0}_{loc}(\Omega)$ for every $t\in (0,T)$.
    \end{theorem}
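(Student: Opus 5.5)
Since the equation is semilinear, the plan is to differentiate it in time and treat $v:=u_t$ as a solution of a linear problem of the form \eqref{eq.zeroth.order}. Formally, $v$ satisfies
\begin{equation*}
v_t=\mathcal{L}v+f'(u)v \quad\text{in } \Omega\times(\delta,T), \qquad v=0 \text{ in } (\mathbb{R}^n\setminus\Omega)\times(\delta,T).
\end{equation*}
Here $a(x,t):=f'(u(x,t))$. By Proposition \ref{prop.semigroup.regularity}, $\|u(\cdot,t)\|_{L^\infty}$ stays below $\alpha$ on $[0,\tilde T]$ for every $\tilde T<T$. Since $f\in C^1_{loc}(\mathcal{I})$ and $u\in C([0,T),H^1_0)$, this gives $a\in L^\infty(\Omega\times[0,\tilde T])$, and I will check that $a\in C_{loc}([\delta,T),L^\infty(\Omega))$, at least after approximation. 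To make this rigorous I will work with difference quotients $v_h=(u(\cdot,t+h)-u(\cdot,t))/h$. These solve \eqref{eq.zeroth.order} with
\begin{equation*}
a_h=\int_0^1 f'\bigl(\theta u(t+h)+(1-\theta)u(t)\bigr)\,d\theta,
\end{equation*}
whose $L^\infty$ bounds are uniform in $h$. Since $u\in C^1((0,T),L^2)$, the quotients $v_h$ are bounded in $L^\infty([t_0-R^2,t_0],L^2(\Omega))$ for $t_0-R^2\ge\delta>0$. The semigroup is analytic, so $u_t(t)\in D(\mathcal L)\subset H^1_0$ for $t>0$, which lets us restart the problem at time $\delta$ with initial datum in $H^1_0$.

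Next I apply Theorem \ref{teo.l2.linfty} to $v_h$, with a constant independent of $h$. This gives a uniform local $L^\infty$ bound, and letting $h\to0$ shows $u_t\in L^\infty_{loc}(\Omega\times(0,T))$.

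The key step is to show that $u_t$ is Hölder continuous, and I will do it by rerunning the Campanato argument of Theorem \ref{teo.holder} for $v$ in place of $u$. In that proof, the reaction term entered only through $\|f(u)\|_{L^\infty}$. Now the forcing is $a v$, which is locally bounded by the previous step. Localizing with a cutoff produces a tail term, since $v$ is merely in $L^\infty_tL^2_x$ globally, and this term is controlled exactly as in Theorem \ref{teo.holder}, which also only used $\|u\|_{L^\infty_tL^2}$. I therefore expect the same Campanato estimate to give $u_t\in C^{\alpha_0}_{loc}(\Omega\times(0,T))$ for every $\alpha_0\in(0,1)$. This is the main obstacle: the estimate must hold uniformly for the difference quotients, and the nonlocal tail has to be handled with only $L^2$ control outside the ball.

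Finally I treat each time slice as an elliptic problem. For fixed $t$,
\begin{equation*}
-\Delta u(\cdot,t)=-(-\Delta)^s u(\cdot,t)+f(u(\cdot,t))-u_t(\cdot,t).
\end{equation*}
By Theorem \ref{teo.holder} and the Lipschitz property of $f$, both $u(\cdot,t)$ and $f(u(\cdot,t))$ lie in $C^{\alpha_0}_{loc}$, and $u_t(\cdot,t)\in C^{\alpha_0}_{loc}$ by the previous step. The fractional term is handled by bootstrapping. Split $u=\eta u+(1-\eta)u$ with a cutoff $\eta$. The far part has a smooth fractional Laplacian inside the ball, with bounds depending only on $\|u(\cdot,t)\|_{L^2}$. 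For the near part, if $u\in C^{\beta}_{loc}$ then $(-\Delta)^s(\eta u)\in C^{\beta-2s}$ when $\beta>2s$ (or is handled directly if $\beta\le 2s$). Schauder estimates for the Laplacian then raise the regularity to $C^{\beta+2-2s}_{loc}$. Iterating finitely many times on shrinking balls gives $u(\cdot,t)\in C^{1,\gamma}_{loc}$ and eventually $C^{2s+\alpha_0}_{loc}$, so that $(-\Delta)^s u(\cdot,t)\in C^{\alpha_0}_{loc}$. The right-hand side is then in $C^{\alpha_0}_{loc}$, and one last application of Schauder estimates gives $u(\cdot,t)\in C^{2,\alpha_0}_{loc}(\Omega)$.
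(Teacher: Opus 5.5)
Your treatment of $u_t$ follows the paper. You use difference quotients $v_h$ solving \eqref{eq.zeroth.order} with a potential bounded uniformly in $h$; your integral formula for $a_h$ is a cleaner, manifestly measurable version of the paper's $f'(\xi)$. You then take the uniform local $L^\infty$ bound from Theorem \ref{teo.l2.linfty}, rerun the Campanato argument with bounded forcing $a_hv_h$, and let $h\to0$. Keeping $t_0-R^2\ge\delta>0$ is what makes the $L^\infty_tL^2_x$ bounds on $v_h$ uniform, since $\|u_t(\cdot,t)\|_{L^2(\Omega)}$ may blow up as $t\to0$ when $u_0$ is only in $H_0^1(\Omega)$. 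The claim $u_t(t)\in D(\mathcal{L})$ is not needed, because $v_h(\delta)\in H_0^1(\Omega)$ automatically. It also does not follow from analyticity alone when $f$ is only $C^1$.

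The genuine difference is the final elliptic step. The paper applies the interior Schauder estimate for the mixed operator, \cite[Theorem 1.5]{SVWZ2025}, directly to $-\Delta u+(-\Delta)^s u=f(u)-u_t\in C^{\alpha_0}_{loc}$. You instead reprove what is needed by hand. You treat $(-\Delta)^s u$ as a perturbation of order $2s<2$ and split it into a far part and a near part. The far part is smooth in the ball, with bounds controlled by $\|u(\cdot,t)\|_{L^2}$. The near part maps $C^\beta$ to $C^{\beta-2s}$. You then bootstrap with classical Schauder theory for $\Delta$, gaining $2-2s$ per step. Your route is self-contained and shows exactly which properties are used; the citation is shorter.

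If you write it out, spell out the case $\beta\le 2s$. It cannot be avoided at the first step when $s\ge 1/2$, since $\beta<1$, and there $(-\Delta)^s(\eta u)$ is only a distribution. You need, for instance, Schauder theory in negative Hölder--Zygmund spaces. Alternatively, use the identity $(-\Delta)^s(\eta u)=-\Delta\, I_{2-2s}(\eta u)$ with the Riesz potential $I_{2-2s}$, absorbing the near part into the Laplacian so that only positive Hölder norms appear.
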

    
    \noindent \textbf{The paper is structured as follows:} in Section \ref{sec.preliminary} we will introduce some relevant definitions and notation. Then, in Section \ref{sec.existence}, we will obtain the existence, uniqueness and some regularity of solutions to system \eqref{1.1} by proving Proposition \ref{prop.semigroup.regularity}. In order to do so, we will also establish some comparison results for the same system, see Lemma \ref{initial.data.comparison} and Lemma \ref{lemma.comparison.abstract}. In Section \ref{sec.campanato}, we prove Theorem \ref{teo.holder}, that is, the Hölder continuity in space and time of solutions of \eqref{1.1}. We do so via a Campanato argument. Then, in Section \ref{sec.l2linfty}, we prove Theorem \ref{teo.l2.linfty}, which gives us an $L^2-L^\infty$ estimate on space of solutions of \eqref{eq.zeroth.order}. This is done by obtaining a Caccioppoli inequality for the system, see Lemma \ref{lemma.Cacciopoli.improved.zeroth}, and then applying a Moser iteration argument, see Lemma \ref{lemma.final.Cacciopoli.recursion}. Finally, in Section \ref{sec.interiorregularity}, we prove Theorem \ref{teo.higherregularity} by applying the known elliptic regularity for the mixed operator $\mathcal{L} = -(-\Delta)^s + \Delta$ together with the results on the previous sections. This way, we determine the $C^{2,\alpha}_{loc}$ regularity in space of solutions to \eqref{1.1}.

\section{Preliminary notions and results} \label{sec.preliminary}

	In this section we will properly state definitions and results that will be used throughout the paper, taking special care with the regularity of our operator $\mathcal{L} = -(-\Delta)^s + \Delta$. Throughout this section, we will assume that $k \in \mathbb{N}$, $\alpha \in (0,1)$ and $s\in(0,1)$.
	
	First, let us define some of the spaces that will be used extensively in our work.
	\begin{align*}
		H_0^1(\Omega) =& \left\{\varphi \in L^2(\Omega) : \nabla \varphi \in L^2(\Omega) \; \text{ and } \; \text{Tr }\varphi = 0 \text{ on } \partial\Omega \right\} \\
		H^s(\mathbb{R}^n) =& \left\{ u\in L^2(\mathbb{R}^n) : [u]^2_{s} = \iint_{\mathbb{R}^n \times \mathbb{R}^n} \frac{(u(x)-u(y))^2}{|x-y|^{n+2s}} dx dy <\infty  \right\} \\
		H_0^s(\Omega) =& \{ u \in H^s(\mathbb{R}^n) : u=0 \text{ a.e. in } \mathbb{R}^n \setminus \Omega\}  \\
		D(\mathcal{L}) =& \left\{\varphi \in L^2(\Omega) : \mathcal{L} \varphi \in L^2(\Omega) \right\} \\
		C^k(\Omega) =& \left\{\varphi: \Omega \to \mathbb{R} : D^\beta \varphi \in C(\Omega) \text{ for every } |\beta| \leq k \right\}\\
		C^{k,\alpha} (\Omega) =& \left\{\varphi \in C^k(\Omega) : D^\beta \varphi \in C^{\alpha}(\Omega) \text{ for every } |\beta|=k \right\} \\
		C^{k,\alpha}_{loc} (\Omega) =& \left\{\varphi \in C^k(\Omega) : D^\beta \varphi \in C^{\alpha}_{loc}(\Omega) \text{ for every } |\beta|=k \right\} \\
		C^{k,1} (\Omega) =& \left\{\varphi \in C^k(\Omega) : D^\beta \varphi \in \text{Lip}(\Omega) \text{ for every } |\beta|=k \right\} \\
		C^{k,1}_{loc} (\Omega) =& \left\{\varphi \in C^k(\Omega) : D^\beta \varphi \in \text{Lip}_{loc}(\Omega) \text{ for every } |\beta|=k \right\}\\
		L^1_{2s} (\mathbb{R}^n) =& \left\{ \varphi \in L^1_{loc}(\mathbb{R}^n) : \int_{\mathbb{R}^n} \frac{|\varphi(y)|}{1+|y|^{n+2s}} dy <\infty \right\}.
	\end{align*}
	Notice that the space $H_0^1(\Omega)$ can be naturally identified with the space
	\begin{equation*}
		\tilde{H}_0^1(\Omega) = \left\{\varphi \in L^2(\mathbb{R}^n) : \nabla \varphi \in L^2(\mathbb{R}^n) \; \text{ and } \; u\equiv 0 \text{ a.e. in } \mathbb{R}^n\setminus \Omega  \right\},
	\end{equation*}
	which is the actual space we need to use due to the nonlocal nature of our operator $\mathcal{L}$, see \cite{BDVV2023}. However, we will identify these two spaces implicitly throughout the paper and every $H_0^1(\Omega)$ function with its extension on $\tilde{H}_0^1(\Omega)$.
	
	The space $L^1_{2s}(\mathbb{R}^n)$ is often called the ``tail space'', and it contains the functions whose ``decay at infinity'' is controlled enough for $(-\Delta)^s \varphi$ to be well defined. Indeed, it is well known that $(-\Delta)^s \varphi$ is well defined pointwise if $\varphi \in C^{1,1}_{loc}(\Omega) \cap L^1_{2s}(\mathbb{R}^n)$, see \cite{S2007}. In the same spirit, we will need to control this same ``decay at infinity'' throughout the article. For that reason, we define the nonlocal tail of a function $\varphi: \mathbb{R}^n \times (0,\tilde{T}) \rightarrow \mathbb{R}$ as follows:
	\begin{equation*}
		\text{Tail}_\infty (\varphi,x_0,r,t_1,t_2) = \sup_{t\in[t_1,t_2]} \left( r^{2s} \int_{\mathbb{R}^n \setminus B_r(x_0)} \frac{|\varphi(y,t)|}{|x_0-y|^{n+2s}} dy \right),
	\end{equation*}
	where $x_0 \in \mathbb{R}^n$ and $[t_1,t_2] \subset (0,\tilde{T})$. It is clear that this quantity is finite for every $\varphi \in L^\infty([t_1,t_2], L^1_{2s}(\mathbb{R}^n))$.
	
	Finally, we fix some of the notation we will be using. For some fixed $(x_0,t_0) \in \Omega \times (0,T)$ and $r>0$, we will often write the spatial balls and parabolic cylinders as follows:
	\begin{equation*}
		B_r := B_r(x_0) = \{ x\in \mathbb{R}^n: |x-x_0|<r\}, \;\; Q_r := B_r \times [t_0-r^2,t_0].
	\end{equation*}
	We will also use $\fint$ to denote the average integral, that is,
	\begin{equation*}
		\fint_K \varphi(z) dz = \frac{1}{|K|} \int_K \varphi(z) dz,
	\end{equation*}
	where $|K|$ denotes the Lebesgue measure of the set $K$.
	
	Finally, we note that $C$ and $c$ will be positive constants that may and will change value from line to line, and their dependence on certain data will be specified when it is necessary to do so.
	
	\section{Maximal semigroup regularity of the semilinear problem} \label{sec.existence}

	Let us first use the semigroup machinery to get existence and uniqueness of solutions to \eqref{1.1}, as well as some initial regularity. Although this process is standard, we include it here for completion.
	
	First, let us fix some $\varepsilon\in(0,\alpha)$ and consider a truncation of the reaction term, $f_\varepsilon: \mathbb{R} \to \mathbb{R}$ such that $f_\varepsilon(x) = f(x)$ for every $x\in (0,\alpha-\varepsilon)$, $f_\varepsilon$ is globally Lipschitz in $\mathbb{R}$ and it is increasing in $\mathbb{R}$. Now define the truncated problem:
    \begin{equation} \label{1.1.truncated}
    \begin{cases}
        \displaystyle (u_\varepsilon)_t(x,t)= \mathcal{L} u_\varepsilon(x,t)+f_\varepsilon(u_\varepsilon(x,t)), & (x,t)\in\Omega\times(0,T^\prime), \\
        u_\varepsilon(x,0)=u_0(x), & x\in\Omega,\\
        u_\varepsilon(x,t) = 0, & (x,t)\in\mathbb{R}^n \setminus \Omega \times [0,T^\prime),
    \end{cases}
    \end{equation}
    where $T^\prime \in (0,\infty]$ is the maximal time of existence of this new problem.
    
    Let us prove that our truncated system \eqref{1.1.truncated} admits a unique classical solution.
    
    \begin{lemma} \label{lemma.truncated.solution}
    	For every $u_0\in H_0^1(\Omega)$, system \eqref{1.1.truncated} has a unique classical solution $u_\varepsilon \in C((0,\infty), D(\mathcal{L})) \cap C([0,\infty),H_0^1(\Omega)) \cap C^1((0,\infty),L^2(\Omega))$.
    \end{lemma}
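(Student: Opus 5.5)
We will treat \eqref{1.1.truncated} as an abstract semilinear Cauchy problem $u' = -Au + F_\varepsilon(u)$ in $X = L^2(\Omega)$. Here $A = -\mathcal{L}$ is taken with domain $D(\mathcal{L})$, understood through the Dirichlet condition in $\mathbb{R}^n\setminus\Omega$, and $F_\varepsilon(u)(x) = f_\varepsilon(u(x))$ is the Nemytskii operator.

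\textbf{Step 1: $A$ generates an analytic semigroup on $L^2(\Omega)$.} We define $A$ through the bilinear form
\begin{equation*}
 \mathcal{E}(\varphi,\psi) = \int_{\Omega} \nabla\varphi\cdot\nabla\psi \, dx + \frac{C_{n,s}}{2}\iint_{\mathbb{R}^n\times\mathbb{R}^n} \frac{(\varphi(x)-\varphi(y))(\psi(x)-\psi(y))}{|x-y|^{n+2s}}\,dx\,dy
\end{equation*}
on the form domain $H_0^1(\Omega)$, identified with $\tilde H_0^1(\Omega)$. We will use the standard embedding $\tilde H^1_0(\Omega)\hookrightarrow H^s_0(\Omega)$, with $[\varphi]_s^2\le C\|\varphi\|_{H^1}^2$ for functions vanishing outside the bounded set $\Omega$. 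This embedding makes $\mathcal{E}$ continuous, symmetric and coercive on $H_0^1(\Omega)$, the coercivity coming from Poincaré's inequality. Moreover, $H_0^1(\Omega)$ is dense in $L^2(\Omega)$.

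The operator associated with $\mathcal{E}$ is therefore self-adjoint and positive, and we identify it with $-\mathcal{L}$ on $D(\mathcal{L})$. It generates an analytic contraction semigroup $e^{-tA}$. Furthermore, $D(A^{1/2}) = H_0^1(\Omega)$ with equivalent norms, by Kato's square root property for symmetric forms.

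\textbf{Step 2: Lipschitz nonlinearity and the mild solution.} Since $f_\varepsilon$ is globally Lipschitz with constant $L_\varepsilon$, the map $F_\varepsilon : L^2(\Omega)\to L^2(\Omega)$ is globally Lipschitz with the same constant. It is well defined because $|f_\varepsilon(u)|\le |f_\varepsilon(0)|+L_\varepsilon|u|$ and $\Omega$ is bounded. A Banach fixed point argument for the variation of constants formula
\begin{equation*}
 u_\varepsilon(t) = e^{-tA}u_0 + \int_0^t e^{-(t-\sigma)A}F_\varepsilon(u_\varepsilon(\sigma))\,d\sigma
\end{equation*}
then gives a unique mild solution in $C([0,T],L^2(\Omega))$ for every $T$. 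We obtain it either with a weighted sup norm or by iterating on short intervals of fixed length. Global Lipschitz continuity excludes blow-up, so the solution is global, i.e.\ $T'=\infty$.

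\textbf{Step 3: regularity.} This is the main obstacle, since $F_\varepsilon$ is only Lipschitz and not differentiable on $L^2$.

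First we get $H_0^1$ continuity. Because $u_0\in H_0^1(\Omega) = D(A^{1/2})$, the map $t\mapsto e^{-tA}u_0$ is continuous into $H_0^1(\Omega)$ on $[0,\infty)$. For the convolution term we use the analytic estimate $\|A^{1/2}e^{-tA}\|\le Ct^{-1/2}$, which is integrable near $0$. This puts the convolution term in $C([0,\infty),H_0^1(\Omega))$.

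Next we get Hölder continuity in time. Using analyticity again, $u_\varepsilon$ is Hölder continuous in time with values in $L^2$ on $[0,T]$. For instance, $\|e^{-hA}-I\|_{D(A^{1/2})\to L^2}\le Ch^{1/2}$ gives $u_\varepsilon\in C^{1/2}([0,T],L^2)$. Consequently $t\mapsto F_\varepsilon(u_\varepsilon(t))$ is Hölder continuous, since $F_\varepsilon$ is Lipschitz.

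Finally, we invoke the classical regularity result for inhomogeneous equations with analytic generators and locally Hölder forcing (Pazy, Thm.~4.3.2 / Cor.~4.3.3). It shows that the mild solution is classical: $u_\varepsilon\in C((0,\infty),D(A))\cap C^1((0,\infty),L^2(\Omega))$, and the equation holds in $L^2$.

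\textbf{Step 4: uniqueness of classical solutions.} Any classical solution is a mild solution, so uniqueness follows from the uniqueness in Step 2. Alternatively, we can test the difference of two solutions against itself, use $\mathcal{E}\ge0$ and the Lipschitz bound, and conclude with Gronwall's inequality.

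The exterior condition $u_\varepsilon=0$ in $\mathbb{R}^n\setminus\Omega$ holds automatically, because $u_\varepsilon(t)\in\tilde H_0^1(\Omega)$.

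The delicate points are the identification of the form operator with $-\mathcal{L}$ on $D(\mathcal{L})$ and the Hölder regularity of the forcing. For the identification we integrate by parts against test functions in $C_c^\infty(\Omega)$, so that the fractional part produces $(-\Delta)^s$ in the distributional sense.
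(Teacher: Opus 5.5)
Your proposal is correct and follows the same route as the paper: you take the form $\mathcal{E}$ on $H_0^1(\Omega)$, apply Kato's representation theorem to get a self-adjoint generator of an analytic semigroup identified with $-\mathcal{L}$, and use that the Nemytskii operator $F_\varepsilon$ is globally Lipschitz on $L^2(\Omega)$. The only difference is that the paper simply cites Lunardi's Proposition 7.1.10, whereas you unpack it (contraction for the mild solution, $D(A^{1/2})=H_0^1(\Omega)$, $C^{1/2}$-in-time regularity making the forcing Hölder, then Pazy's regularity theorem), which gives a valid and more self-contained version of the same argument.
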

    
    \begin{proof}
    	It is a well known fact that $H_0^1(\Omega) \hookrightarrow H_0^s(\Omega)$ for every $0<s<1$, see \cite{DPV2012}. Then define the bilinear form
    	\begin{equation} \label{dirichlet.form}
    		\mathcal{E}(\varphi,\psi)= \frac{C_{n,s}}{2}\iint_{\mathbb{R}^n \times \mathbb{R}^n} \frac{(\varphi(x)-\varphi(y))(\psi(x)-\psi(y))}{|x-y|^{n+2s}} dy dx + \int_\Omega \nabla \varphi(x) \nabla \psi(x) dx.
    	\end{equation}
    	This form's natural domain is $H_0^1(\Omega) \cap H_0^s(\Omega) = H_0^1(\Omega)$. Furthermore, its associated norm is equivalent to the usual $H_0^1$ norm. Indeed,
    	\begin{equation*}
    		\|\nabla u\|_{L^2(\Omega)}^2 \leq \frac{C_{n,s}}{2} [u]^2_s + \|\nabla u\|_{L^2(\Omega)}^2 \leq  C \|\nabla u\|_{L^2(\Omega)}^2,
    	\end{equation*}
    	so it is clear that $\|u\|_{\mathcal{E}}^2  = \|u\|_{L^2(\Omega)}^2 + \mathcal{E}(u,u) \equiv \|u\|_{L^2(\Omega)}^2 + \|\nabla u\|_{L^2(\Omega)}^2 = \|u\|_{H_0^1(\Omega)}$. Therefore, $\mathcal{E}$ is a densely defined, symmetric, closed and nonnegative bilinear form. By Kato's representation theorem, see \cite{K1995}, we know that there exists a unique nonnegative self-adjoint operator $A$ associated with $\mathcal{E}$ that generates an analytical semigroup in $L^2(\Omega)$. This means that for every $u\in D(A)$ and $v\in H_0^1(\Omega)$,
    	\begin{equation*}
    		\mathcal{E}(u,v) = \langle Au,v \rangle_{L^2(\Omega)}.
    	\end{equation*}
    	According to how we defined $\mathcal{E}$, using integration by parts, it is easy to identify
    	\begin{equation*}
    		A = -\mathcal{L} = (-\Delta)^s - \Delta,
    	\end{equation*}
    	so we conclude that $-\mathcal{L}$ generates an analytical semigroup $e^{\mathcal{L} t}$ in $L^2(\Omega)$.
    	
    	Since $f_\varepsilon$ is globally Lipschitz, the Nemytskii operator $F_\varepsilon: L^2(\Omega) \to L^2(\Omega)$ is also globally Lipschitz. Bearing in mind that $u_0 \in H_0^1(\Omega) = D(\mathcal{E})$, we can use classical semigroup theory (see \cite[Proposition 7.1.10]{L1995}) to conclude that there exists a unique classical solution satisfying
    	\begin{equation*}
    		u_\varepsilon \in C((0,\infty), D(\mathcal{L})) \cap C([0,\infty),H_0^1(\Omega)) \cap C^1((0,\infty),L^2(\Omega)). \hfill \square
    	\end{equation*}
    \end{proof}
    
    Now we want to prove that solutions of the truncated problem \eqref{1.1.truncated} with initial data $u_0\in H_0^1(\Omega) \cap L^\infty(\Omega)$ such that $u_0(x) \in [0,\alpha-2\varepsilon]$ a.e. $x\in\Omega$ remain in $(0,\alpha-\varepsilon)$ up to some time $T_\varepsilon$. This way, we could identify the solutions of \eqref{1.1.truncated} with those of \eqref{1.1} and inherit the existence, uniqueness and regularity.
    
    To do so, we first need to establish a comparison result with respect to the initial data for equation \eqref{1.1.truncated}.
    
    \begin{lemma} \label{initial.data.comparison}
    	Let $u_0,u_1 \in H_0^1(\Omega)$ be such that $u_0 \geq u_1$ a.e. $x\in\Omega$. If $u_\varepsilon^0$ and $u_\varepsilon^1$ are the solutions of \eqref{1.1.truncated} with initial data $u_0$ and $u_1$ respectively, then
    	\begin{equation} \label{comparison.abstract}
    		u_\varepsilon^0(\cdot,t) \geq u^1_\varepsilon(\cdot,t) \text{ a.e. } x \in \Omega \text{ for all } t \in [0,\infty).
		\end{equation}
    \end{lemma}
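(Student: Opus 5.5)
We argue with the difference of the two solutions and a Stampacchia-type energy estimate on its positive part. Set $w = u^1_\varepsilon - u^0_\varepsilon$, so that $w(\cdot,0) = u_1 - u_0 \le 0$ a.e. and $w \equiv 0$ in $\mathbb{R}^n\setminus\Omega$. By Lemma \ref{lemma.truncated.solution}, $w \in C((0,\infty),D(\mathcal{L}))\cap C([0,\infty),H_0^1(\Omega))\cap C^1((0,\infty),L^2(\Omega))$, and
\begin{equation*}
w_t = \mathcal{L} w + f_\varepsilon(u^1_\varepsilon) - f_\varepsilon(u^0_\varepsilon) \quad \text{in } L^2(\Omega),\ t>0.
\end{equation*}
We show that $w^+ = \max\{w,0\}$ vanishes for all times. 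Since $w(\cdot,t)\in H_0^1(\Omega)$, also $w^+(\cdot,t)\in H_0^1(\Omega) = D(\mathcal{E})$, so it is an admissible test function in the weak formulation $\langle -\mathcal{L}w, \varphi\rangle = \mathcal{E}(w,\varphi)$ given by Kato's representation.

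The key steps, in order, are the following.

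\emph{Step 1.} Since $w\in C^1((0,\infty),L^2(\Omega))$ and $r\mapsto \frac12 (r^+)^2$ is $C^1$ with Lipschitz derivative, we have for $t>0$
\begin{equation*}
\frac{d}{dt}\frac12\|w^+\|^2_{L^2(\Omega)} = \langle w_t, w^+\rangle.
\end{equation*}
Moreover $\|w^+(t)\|_{L^2}\to \|w^+(0)\|_{L^2} = 0$ as $t\to 0$, by continuity of $w$ into $H_0^1 \subset L^2$ and the $1$-Lipschitz property of $r\mapsto r^+$.

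\emph{Step 2.} Show that $\mathcal{E}(w,w^+)\ge \mathcal{E}(w^+,w^+)\ge 0$. The local part is exact: $\int_\Omega \nabla w\cdot\nabla w^+ = \int_\Omega |\nabla w^+|^2$. For the nonlocal part, write $w = w^+ - w^-$ and use the pointwise inequality
\begin{equation*}
(w(x)-w(y))(w^+(x)-w^+(y)) \ge (w^+(x)-w^+(y))^2,
\end{equation*}
which is equivalent to $-(w^-(x)-w^-(y))(w^+(x)-w^+(y)) = w^-(x)w^+(y) + w^+(x)w^-(y)\ge 0$.

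\emph{Step 3.} Control the reaction term. Since $f_\varepsilon$ is increasing and globally Lipschitz with constant $L_\varepsilon$, on $\{w>0\}$ we have $0\le f_\varepsilon(u^1_\varepsilon)-f_\varepsilon(u^0_\varepsilon)\le L_\varepsilon w^+$, while on $\{w\le 0\}$ the product with $w^+$ vanishes. Hence
\begin{equation*}
\langle f_\varepsilon(u^1_\varepsilon)-f_\varepsilon(u^0_\varepsilon), w^+\rangle \le L_\varepsilon \|w^+\|_{L^2(\Omega)}^2.
\end{equation*}
Only the Lipschitz bound is needed here, not the monotonicity.

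\emph{Step 4.} Combining Steps 1--3 gives
\begin{equation*}
\frac{d}{dt}\|w^+\|^2_{L^2(\Omega)} \le 2L_\varepsilon\|w^+\|^2_{L^2(\Omega)}, \quad t>0.
\end{equation*}
Gronwall's inequality on $[\delta,t]$, followed by letting $\delta\to0$ and using Step 1, yields $\|w^+(t)\|_{L^2(\Omega)}\le e^{L_\varepsilon t}\cdot 0 = 0$. Thus $u^0_\varepsilon\ge u^1_\varepsilon$ a.e. in $\Omega$ for every $t\ge 0$.

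The main obstacle is the rigor of Steps 1--2. One must justify the chain rule for $\|w^+\|^2$ for an $L^2$-valued $C^1$ function; this holds because $r\mapsto (r^+)^2/2$ has Lipschitz derivative, so one can differentiate difference quotients. One must also identify $\langle -\mathcal{L}w,w^+\rangle$ with $\mathcal{E}(w,w^+)$, which is exactly the Kato representation with $w\in D(\mathcal{L})$ and $w^+\in H_0^1(\Omega)$. Finally, the behaviour near $t=0$ is handled through continuity into $H_0^1(\Omega)$, which is why the integration is done on $[\delta,t]$ before letting $\delta\to0$.
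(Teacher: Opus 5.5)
Your argument is correct, but it takes a different route from the paper.

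The paper works entirely with the mild (variation-of-constants) formulation. It runs Picard iterations $u^i_{k+1}=\mathcal{G}_i(u^i_k)$ for both initial data and propagates the order $u^0_k\ge u^1_k$ by induction. The induction uses two facts: $e^{\mathcal{L}t}$ is positivity preserving, and $f_\varepsilon$ is increasing. It then passes to the limit on a short interval $[0,\tau]$ where $\mathcal{G}_i$ is a contraction, and restarts at $\tau$ to cover $[0,\infty)$.

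You instead test the equation for $w=u^1_\varepsilon-u^0_\varepsilon$ with $w^+$ and close with Gronwall. Your Step 2 inequality $\mathcal{E}(w,w^+)\ge\mathcal{E}(w^+,w^+)\ge 0$ is essentially the Beurling--Deny property of the Dirichlet form. That property is exactly what underlies the positivity of the semigroup, which the paper invokes without proof. So your route is more self-contained in that respect. It also needs only the Lipschitz bound on $f_\varepsilon$, not its monotonicity, as you correctly observe. And it yields the ordering on all of $[0,\infty)$ at once, with no local contraction or continuation step.

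The price is regularity. To justify testing with $w^+$, you use $w\in C^1((0,\infty),L^2(\Omega))\cap C((0,\infty),D(\mathcal{L}))$ from Lemma~\ref{lemma.truncated.solution}, together with the Kato identity $\langle -\mathcal{L}w,w^+\rangle=\mathcal{E}(w,w^+)$. The paper's monotone-iteration argument stays at the level of mild solutions. It is also reused almost verbatim for the super/subsolution comparison in Lemma~\ref{lemma.comparison.abstract}, where monotonicity of $f_\varepsilon$ is genuinely exploited.
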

    
    \begin{proof}
    	Due to classical semigroup theory, we know that the solution $u^i$ of system \eqref{1.1.truncated} with initial data $u_i$ is the unique fixed point of
    	\begin{equation*}
    		\mathcal{G}_i(w)(x,t) = e^{\mathcal{L} t} u_i(x) + \int_0^t e^{\mathcal{L}(t-s)} f_\varepsilon (w(x,s)) ds.
    	\end{equation*}
    	in $C([0,\tau], H_0^1(\Omega))$, since $\mathcal{G}_i$ is a contraction provided $\tau$ is small enough for $i=0,1$. Now we consider the sequence of Picard iterations,
    	\begin{equation*}
    		u^i_{k+1}(x,t) = \mathcal{G}_i(u^i_k)(x,t),
    	\end{equation*}
    	for every $(x,t) \in \Omega \times [0,\tau]$ and $k>1$, and $u^i_1 = u_i$. It is well known that this sequence $u^i_k$ converges to $u^i_\varepsilon$ in $C([0,\tau],H_0^1(\Omega))$.
    	
    	We know that $u^0_1 \geq u^1_1$ a.e. $x\in\Omega$. Assume now that $u^0_j \geq u^1_j$ a.e. $x\in\Omega$ for every $t\in[0,\tau]$ and every $j\leq k$, and let us prove that the ordering is still true for $j=k+1$. Indeed, we have
    	\begin{equation*}
    		u^i_{k+1}(x,t) = \mathcal{G}_i(u^i_k)(x,t) = e^{\mathcal{L}t} u_i(x) + \int_{0}^t e^{\mathcal{L}(t-s)} f_\varepsilon (u^i_k(x,s)) ds,
    	\end{equation*}
    	a.e. $x\in\Omega$ for every $t\in[0,\tau]$ and, since $f_\varepsilon$ is increasing, it is clear that $f_\varepsilon (u^0_k(x,s)) \geq f_\varepsilon (u^1_k(x,s))$ a.e. $x\in\Omega$ for every $t\in[0,\tau]$. Then, using that the semigroup is positivity preserving:
    	\begin{align*}
    		e^{\mathcal{L}t} u_0(x) &\geq e^{\mathcal{L}t} u_1(x), \\
    		e^{\mathcal{L}(t-s)} f_\varepsilon(u^0_k(x,s)) &\geq e^{\mathcal{L}(t-s)} f_\varepsilon(u^1_k(x,s)),
    	\end{align*}
    	a.e. $x\in\Omega$ for every $t\in[0,\tau]$. This means that $u^0_{k+1} \geq u^1_{k+1}$ a.e. $x\in\Omega$ for every $t\in[0,\tau]$. By induction, $u^0_n \geq u^1_n$ a.e. $x\in\Omega$ for every $t\in[0,\tau]$ and for every $n\in\mathbb{N}$. Using the convergence of this sequence to the respective solutions, we obtain that
    	\begin{equation*}
    		u^0_\varepsilon(x,t) \geq u^1_\varepsilon(x,t)
    	\end{equation*}
    	a.e. $x\in\Omega$ for every $t\in[0,\tau]$. Repeating this argument starting at time $\tau$, we can get the order preservation for every $t\in[0,\infty)$ and the result is proved. \qed
    \end{proof}
    
    Next, let us prove a comparison result for supersolutions and subsolutions of \eqref{1.1.truncated}. The ideas of the proof are very similar to those of Lemma \ref{initial.data.comparison}, but we include them anyway for completion.
    
    \begin{definition}
    	We say $\overline{u}\in C((0, \infty),D(\mathcal{L})) \cap C^1((0,\infty),L^2(\Omega))\cap C([0,\infty),L^1_{2s}(\mathbb{R}^n))$ is a supersolution of system \eqref{1.1.truncated} in $[t_0,t_1]$ if
    	 \begin{equation} \label{def.supersolution}
    \begin{cases}
        \displaystyle \overline{u}_t(x,t)\geq \mathcal{L}\overline{u}(x,t)+f_\varepsilon(\overline{u}(x,t)), & (x,t)\in\Omega\times [t_0,t_1], \\
        \overline{u}(x,0)\geq u_0(x), & x\in\Omega,\\
        \overline{u}(x,t) \geq 0, & (x,t)\in\mathbb{R}^n \setminus \Omega \times [t_0,t_1],
    \end{cases}
    \end{equation}
    a.e. $x\in\Omega$ for every $t\in[t_0,t_1]$. We say $\underline{u}\in C((0, \infty),D(\mathcal{L})) \cap C^1((0,\infty),L^2(\Omega))\cap C((0,\infty),L^1_{2s}(\mathbb{R}^n))$ is a subsolution of system \eqref{1.1.truncated} in $[t_0,t_1]$ if it satisfies the reverse inequalities a.e. $x\in\Omega$ for every $t\in[t_0,t_1]$.
    \end{definition}
    
    \begin{lemma} \label{lemma.comparison.abstract}
    	Let $u_\varepsilon$ be the solution to \eqref{1.1.truncated} with initial data $u_0$ and $\overline{u}$ a supersolution of \eqref{1.1.truncated} in $[0,\tau]$. Then
    	\begin{equation*}
    		\overline{u}(x,t) \geq u_\varepsilon(x,t)
    	\end{equation*}
    	a.e. $x\in\Omega$ and for every $t\in[0,\tau]$. The same is true for subsolutions with a reversed inequality.
    \end{lemma}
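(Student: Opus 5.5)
The approach mirrors Lemma \ref{initial.data.comparison}: I would recast the supersolution inequality as a mild (Duhamel) inequality and then compare Picard iterates. Set $g(x,t) := \overline{u}_t - \mathcal{L}\overline{u} - f_\varepsilon(\overline{u}) \geq 0$ in $\Omega\times[0,\tau]$. The difficulty is that $\overline{u}$ need not vanish outside $\Omega$; it is only nonnegative there. So I would first split off the exterior data. Write $\overline{u} = w + h$, where $w(\cdot,t)$ vanishes outside $\Omega$ and $h(\cdot,t) = \overline{u}(\cdot,t)\chi_{\mathbb{R}^n\setminus\Omega} \geq 0$. For $x\in\Omega$ we have $\mathcal{L}h(x,t) = C_{n,s}\int_{\mathbb{R}^n\setminus\Omega} \frac{h(y,t)}{|x-y|^{n+2s}}dy \geq 0$. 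This uses $\overline{u}\in L^1_{2s}$, and the local part vanishes inside $\Omega$. Hence, on $\Omega$,
\begin{equation*}
w_t = \mathcal{L}w + f_\varepsilon(\overline{u}) + g + \mathcal{L}h \geq \mathcal{L} w + f_\varepsilon(\overline{u}),
\end{equation*}
and $w$ (restricted to $\Omega$) lies in the framework where $\mathcal{L}$ is the Dirichlet generator of $e^{\mathcal{L}t}$ from Lemma \ref{lemma.truncated.solution}. I am implicitly using that $w(\cdot,t)\in D(\mathcal{L})$; making this rigorous is a technical point I would handle by the regularity assumed in the definition.

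The next step is the variation of constants formula for the $C^1((0,\tau],L^2)$ function $w$:
\begin{equation*}
w(t) = e^{\mathcal{L}t}w(0) + \int_0^t e^{\mathcal{L}(t-r)}\big(f_\varepsilon(\overline{u}(r)) + \tilde g(r)\big)dr,
\end{equation*}
where $\tilde g := g + \mathcal{L}h \geq 0$. Since the semigroup is positivity preserving, and since $w(0)=\overline{u}(\cdot,0)\geq u_0$ on $\Omega$, this gives
\begin{equation*}
\overline{u}(t) \geq w(t) \geq e^{\mathcal{L}t}u_0 + \int_0^t e^{\mathcal{L}(t-r)} f_\varepsilon(\overline{u}(r))\,dr = \mathcal{G}_0(\overline{u})(t)
\end{equation*}
a.e. in $\Omega$. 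So $\overline{u} \geq \mathcal{G}_0(\overline{u})$.

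Now I would run the Picard scheme from Lemma \ref{initial.data.comparison}, starting from $u_1 := \overline{u}$ and setting $u_{k+1} := \mathcal{G}_0(u_k)$. Because $f_\varepsilon$ is increasing and the semigroup preserves order, $\mathcal{G}_0$ is monotone. Induction then gives $\overline{u}\geq u_k$ for all $k$: indeed, $u_{k+1} = \mathcal{G}_0(u_k) \leq \mathcal{G}_0(\overline{u}) \leq \overline{u}$. Since $\mathcal{G}_0$ is a contraction for small time (global Lipschitz $f_\varepsilon$, working in $C([0,\tau'],L^2(\Omega))$), the iterates converge to $u_\varepsilon$, and hence $\overline{u}\geq u_\varepsilon$ on $[0,\tau']$. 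To cover all of $[0,\tau]$, I would restart at $\tau'$ using $\overline{u}(\tau')\geq u_\varepsilon(\tau')$ as the new initial ordering. The step length does not depend on the data, only on $\mathrm{Lip}(f_\varepsilon)$, so finitely many restarts suffice. The subsolution case is symmetric: there $h\leq 0$ and all inequalities reverse.

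The main obstacle is the justification of the mild inequality. Specifically, I must show that a $C^1$ function with $w(t)\in D(\mathcal{L})$ satisfies Duhamel's formula with a nonnegative forcing, and that the exterior contribution $\mathcal{L}h$ has the right sign and is in $L^2$ (at least locally in time). To get the Duhamel formula, I would differentiate $r\mapsto e^{\mathcal{L}(t-r)}w(r)$ on $[\delta,t]$ and let $\delta\to0$ using continuity at $0$. If integrability of $\mathcal{L}h$ near $\partial\Omega$ fails, I would approximate by dropping it and noting that only its sign is used.
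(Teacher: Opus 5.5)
Your proposal is the paper's proof almost step for step. The paper's $\overline{u}^\prime=\overline{u}\chi_\Omega$ is your $w$, and its observation $\mathcal{L}\overline{u}^\prime\le\mathcal{L}\overline{u}$ in $\Omega$ is your $\mathcal{L}h\ge 0$. It then obtains the same mild inequality $\overline{u}^\prime\ge\mathcal{G}(\overline{u}^\prime)$ from the positivity of $e^{\mathcal{L}t}$, runs the same monotone Picard iteration started at the supersolution, and finishes with the same continuation in time.

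The only differences are two. First, you run the iteration in $C([0,\tau'],L^2(\Omega))$ instead of $C([0,\tau_0],H_0^1(\Omega))$, which is the safer choice since $w$ need not lie in $H_0^1(\Omega)$. Second, you explicitly flag the variation-of-constants step for $w$, which the paper simply asserts. Note that for the spatially constant supersolution used in Proposition \ref{prop.semigroup.regularity} one has $w(t)\notin D(\mathcal{L})$. So in both versions this step needs more than the classical Duhamel formula, for example a weak/duality formulation, in which the boundary term coming from $\Delta$ has the favourable sign.
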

    \begin{proof}
    	First, we define
    	\begin{equation*}
    \overline{u}^\prime = \left\{
    \begin{array}{ll}
        \overline{u}, & x\in\Omega, \\
        0, & x\in\mathbb{R}^n\setminus \Omega.
    \end{array}
    \right.
    \end{equation*}
    and notice that
    \begin{equation*}
    	-(-\Delta)^s \overline{u}^\prime(x,t) + \Delta \overline{u}^\prime(x,t) \leq -(-\Delta)^s \overline{u}(x,t) + \Delta \overline{u}(x,t),
    \end{equation*}
    a.e. $x\in\Omega$ for every $t\in[0,\tau]$, which means that
    \begin{equation*}
    	\overline{u}^\prime_t(x,t) = \overline{u}_t(x,t) \geq \mathcal{L} \overline{u}(x,t) + f_\varepsilon(\overline{u}(x,t)) \geq \mathcal{L} \overline{u}^\prime(x,t) + f_\varepsilon(\overline{u}^\prime(x,t)),
    \end{equation*}
    a.e. $x\in\Omega$ for every $t\in[0,\tau]$. This means that there exists some non-negative $g:\mathbb{R} \to L^2(\Omega)$ such that
    	\begin{equation*}
    		\overline{u}^\prime_t(x,t) = \mathcal{L}\overline{u}^\prime(x,t)+f_\varepsilon(\overline{u}^\prime(x,t))+ g(t)(x)
    	\end{equation*}
    	a.e. $x\in\Omega$ for every $t\in[0,\tau]$. Since $\overline{u}^\prime$ satisfies the equation with the homogeneous Dirichlet boundary conditions, we know that it satisfies the `variation of constants' formula:
    	\begin{equation*}
    	\overline{u}^\prime(x,t) = e^{\mathcal{L}t}\overline{u}^\prime(x,0) + \int_{0}^t  e^{\mathcal{L}(t-s)}(f_\varepsilon(\overline{u}^\prime(x,s))+ g(s)(x)) ds,
    	\end{equation*}
    	and thanks to the positivity preserving semigroup:
    	\begin{equation} \label{eq.ordering}
    	\overline{u}^\prime(x,t) \geq e^{\mathcal{L}t} u_0(x) + \int_{0}^t  e^{\mathcal{L}(t-s)}(f_\varepsilon(\overline{u}^\prime(x,s))) ds = \mathcal{G}(\overline{u}^\prime)(x,t),
    	\end{equation}
    	a.e. $x\in\Omega$ for every $t\in[0,\tau]$. We also know that $u_\varepsilon$ is the unique fixed point of $\mathcal{G}$ in some interval $[0,\overline{\tau}]$. Choose $\tau_0 = \min\{\overline{\tau},\tau\}$, so everything is well defined up to time $\tau_0$.
    	
    	We again use the mechanism of Picard iterations in $C([0,\tau_0],H_0^1(\Omega))$. Define $u_{k+1}(x,t) = \mathcal{G}(u_k)(x,t)$ with $u_1(x,t) = \overline{u}^\prime(x,t)$. Then the sequence converges to $u_\varepsilon$ in $C([0,\tau_0],H_0^1(\Omega))$. Arguing by induction, we know that $\overline{u}^\prime \geq u_1$, and assume that $\overline{u}^\prime \geq u_j$ for every $j\leq k$. Let us prove that $\overline{u}^\prime \geq u_{k+1}$.
    	
    	We know thanks to the proof of Lemma \ref{initial.data.comparison} that $\mathcal{G}$ is increasing in $C([0,\tau_0],H_0^1(\Omega))$. Then using \eqref{eq.ordering} it is clear that
    	\begin{equation*}
    	\overline{u}^\prime(x,t) \geq \mathcal{G}(\overline{u}^\prime)(x,t) \geq \mathcal{G} (u_k)(x,t) = u_{k+1}(x,t),
    	\end{equation*}
    	a.e. $x\in\Omega$ for every $t\in[0,\tau_0]$. We have then proved that $\overline{u}^\prime \geq u_n$ for every $n\in\mathbb{N}$ and using the convergence of the sequence, we conclude that
    	\begin{equation*}
    		\overline{u}^\prime(x,t) \geq u_\varepsilon (x,t)
    	\end{equation*}
    	a.e. $x\in\Omega$ for every $t\in[0,\tau_0]$. Repeating this argument starting at time $\tau_0$, a typical continuation argument gives us that it is true for every $t\in[0,\tau]$. \qed
    \end{proof}
    
    \begin{corollary} \label{cor.positivity}
    	Let $u_0\in H_0^1(\Omega)$ such that $u_0(x)\geq 0$ a.e. $x\in\Omega$. Then the solution $u_\varepsilon$ of \eqref{1.1.truncated} with initial data $u_0$ satisfies $u_\varepsilon (x,t) \geq 0$ a.e. $x\in\Omega$ for every $t\in[0,\infty)$. 
    \end{corollary}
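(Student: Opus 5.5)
The corollary follows from Lemma \ref{lemma.comparison.abstract} applied with the trivial subsolution $\underline{u}\equiv 0$. Alternatively, it follows from Lemma \ref{initial.data.comparison} by comparing $u_\varepsilon$ with the solution issued from the zero initial datum.

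\textbf{Subsolution route.} First I check that $\underline{u}\equiv 0$ is a subsolution of \eqref{1.1.truncated} on every interval $[0,\tau]$.

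\emph{Regularity.} The zero function clearly lies in $C((0,\infty),D(\mathcal{L}))\cap C^1((0,\infty),L^2(\Omega))\cap C((0,\infty),L^1_{2s}(\mathbb{R}^n))$.

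\emph{The reversed inequalities of \eqref{def.supersolution}.}
\begin{itemize}
\item Interior inequality: $\underline{u}_t=0$ and $\mathcal{L}\underline{u}=0$, so we need $0\le f_\varepsilon(0)$.
\item Initial inequality: $\underline{u}(x,0)=0\le u_0(x)$ a.e., which is the hypothesis.
\item Exterior inequality: $\underline{u}=0\le 0$ in $\mathbb{R}^n\setminus\Omega$.
\end{itemize}

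The only point needing care is $f_\varepsilon(0)\ge 0$. Since $f$ is nonnegative and $f_\varepsilon=f$ on $(0,\alpha-\varepsilon)$, continuity of $f_\varepsilon$ (it is Lipschitz) gives $f_\varepsilon(0)=\lim_{x\to0^+}f(x)=f(0)\ge 0$.

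With this, Lemma \ref{lemma.comparison.abstract} in its subsolution version yields $u_\varepsilon(\cdot,t)\ge 0$ a.e. in $\Omega$ for every $t\in[0,\tau]$. Since $\tau$ is arbitrary, the conclusion holds for all $t\in[0,\infty)$.

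\textbf{Cross-check via Lemma \ref{initial.data.comparison}.} Let $w$ be the solution with initial data $u_1=0\in H_0^1(\Omega)$. Then $u_\varepsilon\ge w$. It remains to see that $w\ge 0$, which I would get by induction on the Picard iterates, starting from $w_1=0$:
\[
w_{k+1}=\int_0^t e^{\mathcal{L}(t-s)}f_\varepsilon(w_k)\,ds .
\]
If $w_k\ge 0$, then monotonicity of $f_\varepsilon$ gives $f_\varepsilon(w_k)\ge f_\varepsilon(0)\ge 0$, and the positivity-preserving semigroup then gives $w_{k+1}\ge 0$.

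\textbf{Main obstacle.} The only delicate point is the sign of $f_\varepsilon$ at $0$. The truncation was only required to agree with $f$ on $(0,\alpha-\varepsilon)$, so nonnegativity at $0$ must come from continuity as above. Nonnegativity for negative arguments is not needed, because the comparison only evaluates $f_\varepsilon$ at the subsolution value $0$.
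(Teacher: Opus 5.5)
Your proposal is correct and is the paper's argument: its entire proof is the observation that the constant $0$ is a subsolution of \eqref{1.1.truncated}, to which Lemma \ref{lemma.comparison.abstract} applies. Your explicit check that $f_\varepsilon(0)=\lim_{x\to 0^+}f(x)\ge 0$, which is the only nontrivial subsolution condition, is left implicit in the paper; the cross-check via Lemma \ref{initial.data.comparison} is valid but not needed.
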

    
    \begin{proof}
    	The constant $0$ is a subsolution of system \eqref{1.1.truncated}. \qed
    \end{proof}
    
    Now that we have the comparison results on Lemma \ref{initial.data.comparison} and Lemma \ref{lemma.comparison.abstract}, we are able to prove that the original system \eqref{1.1} also has a classical solution.
    
    \begin{proof}[Proof of Proposition \ref{prop.semigroup.regularity}]
		First, assume that $\alpha<\infty$ and fix some $0<\varepsilon<(\alpha-\|u_0\|_\infty)/2$. Then consider the truncation $f_\varepsilon$ of $f$ in $(0,\alpha-\varepsilon)$.	Let $u_\varepsilon$ be the unique classical solution obtained for the truncated system \eqref{1.1.truncated} with initial data $u_0$ in Lemma \ref{lemma.truncated.solution}. Next, introduce the auxiliary ODE problem
    	 \begin{equation} \label{aux.supersolution}
    \left\{
    \begin{array}{ll}
        w_t(t)= f_\varepsilon(w(t)), &\\
        w(0)= \|u_0\|_\infty. &
    \end{array}
    \right.
    \end{equation}
    Since $f_\varepsilon$ is globally Lipschitz, Peano's and Picard-Lindelöf's theorems tell us that there exists a unique continuous local solution to \eqref{aux.supersolution}. What is more, since $w(0) = \|u_0\|_\infty \leq \alpha-2\varepsilon$, we can ensure by continuity that there exists some time $T_\varepsilon$ (that only depends on $\varepsilon$) such that $w(t) \leq \alpha-\varepsilon$ for every $t\in[0,T_\varepsilon]$. Let us now prove that $w(x,t) = w(t)$ for every $(x,t)\in\mathbb{R}^n \times [0,T_\varepsilon]$ is a supersolution of \eqref{1.1.truncated}.
    
    First, it is clear that $w(x,t) \geq 0$ for every $(x,t) \in \mathbb{R}^n \setminus\Omega \times [0,T_\varepsilon]$. It is also immediate to notice that $w(x,0) \geq u_0(x)$ for every $x\in\Omega$. Finally, notice that, since $w$ is constant in space:
    \begin{equation*}
    	\mathcal{L} w(x,t) = -(-\Delta)^s w(x,t) + \Delta w(x,t) = 0,
    \end{equation*}
    so $w_t(x,t) = \mathcal{L} w(x,t) + f_\varepsilon(w(x,t))$ for every $(x,t) \in \Omega \times[0,T_\varepsilon]$. Then $w$ is a supersolution of \eqref{1.1.truncated} and by Lemma \ref{lemma.comparison.abstract} and Corollary \ref{cor.positivity} we know that
    \begin{equation*}
    	\alpha-\varepsilon \geq w(x,t) \geq u_\varepsilon(x,t) \geq 0,
    \end{equation*}
    a.e. $x\in\Omega$ for every $t\in[0,T_\varepsilon]$.
    
    Then, thanks to the definition of $f_\varepsilon$, we have that $f_\varepsilon(u_\varepsilon(\cdot,t)) = f(u_\varepsilon(\cdot,t))$ for every $t\in[0,T_\varepsilon]$. Therefore we conclude that $u_\varepsilon$ is actually a classical solution of \eqref{1.1} for every $t\in[0,T_\varepsilon]$. 
    
    Doing this process again starting at time $T_\varepsilon$, maybe choosing some smaller $\varepsilon^\prime$, we could extend our solution to some time interval $[0,T_\varepsilon+T_{\varepsilon^\prime}]$. We could then repeat this as long as our solution satisfies $\|u(\cdot,t)\|_\infty < \alpha$ and conclude that there exists some time $T>0$ such that \eqref{1.1} has a classical solution
    \begin{equation*}
    	u \in C((0,T), D(\mathcal{L})) \cap C([0,T),H_0^1(\Omega)) \cap C^1((0,T),L^2(\Omega)),
    \end{equation*}
    and that $\limsup_{t\to t_0} \|u(\cdot,t)\|_\infty = \alpha$ if and only if $t_0=T$.

	If $\alpha = +\infty$, then we consider the truncation $f_m$ of $f$ in $(0, m)$ where $m = \max\{ 1, 2\|u_0\|_{L^\infty(\Omega)} \}$ and following the same reasoning we reach the result. \qed
    \end{proof}
    
    \section{Hölder continuity via the Campanato-Meyers characterization} \label{sec.campanato}
    
	We will now prove further regularity of the solutions of \eqref{1.1} using a Campanato style argument. Our objective is to find, for any $(x_0,t_0)\in \Omega \times (0,T)$, some $r^\prime>0$ and $\alpha>0$ such that
	\begin{equation*}
		\fint_{t_0-r^2}^{t_0} \fint_{B_r(x_0)} |u(x,t)-(u)_{B_r(x_0) \times (t_0-r^2,t_0)}|^2  dx dt \leq C r^{2\alpha},
	\end{equation*}
	for every $r\leq r^\prime$. Via the Campanato integral characterization, see \cite{G2003}, this gives us that $u\in C^\alpha_{loc}(\Omega \times (0,T))$, which is our objective.
	
	This section is heavily based on \cite{D2024}, with some changes to accommodate the effect of the reaction term on all the estimates. First we will establish a Caccioppoli inequality for equation \eqref{1.1}, then state a difference estimate comparing against the classical heat equation and, finally, use a Campanato decay argument to prove Hölder regularity. We include all the details for completeness and to avoid omitting key steps.

\subsection{Caccioppoli estimate}

In this subsection we will establish the Caccioppoli inequality for our equation. The first step should be the regularization procedure with respect to the time variable, which is a straightforward process that has been tackled multiple times such as in \cite{DZZ2021}. We will skip this step here.

\begin{proposition} \label{prop.cacciopoli}
	Let $u$ be the unique solution of problem \eqref{1.1}. Let $x_0\in\Omega$, $r>0$ and $B_r = B_r(x_0)$ such that $\overline{B}_r \subset \Omega$. Let also $\tau_2>\tau_1>0$ such that they satisfy $[\tau_1,\tau_2] \subset (0,T)$. Then for any nonnegative functions $\psi \in C_0^\infty (B_r)$ and $\eta\in C^\infty(\mathbb{R})$ such that $\eta(t) \equiv 0$ if $t\leq \tau_1$, it holds that
	
	\begin{align} \label{eq.caccio.ineq}
		\nonumber & \sup_{t\in(\tau_1,\tau_2)} \left( \int_{B_r} v^2 \eta^2 \psi^2 (x,t) dx \right) +\int_{\tau_1}^{\tau_2} \int_{B_r} |\nabla v|^2 \psi^2 \eta^2 (x,t) dx dt\\
		\nonumber & + \int_{\tau_1}^{\tau_2} \int_{B_r} \int_{B_r} \frac{|v\psi(x,t) - v\psi(y,t)|^2 \eta^2(t)}{|x-y|^{n+2s}} dxdydt  \\
		 \leq& \; C \int_{\tau_1}^{\tau_2} \int_{B_r} v^2 \eta |\eta_t| \psi^2 (x,t) dx dt + \int_{\tau_1}^{\tau_2} \int_{B_r} |\nabla \psi|^2 v^2 \eta^2 dx dt\\
		 \nonumber &+  C \int_{\tau_1}^{\tau_2} \int_{B_r} \int_{B_r} \frac{\max\{|v(x,t)|,|v(y,t)|\}^2}{|x-y|^{n+2s}}  |\psi(x)-\psi(y)|^2 \eta^2(t) dx dy dt \\
		 \nonumber &+ C \int_{\tau_1}^{\tau_2} \int_{B_r} \left( \int_{\mathbb{R}^n \setminus B_r} \frac{|v(y,t)|}{|x-y|^{n+2s}} dy \right) |v(x,t)| \psi^2(x) \eta^2(t) dx dt \\
		 \nonumber&+ C \int_{\tau_1}^{\tau_2} \int_{B_r} |f(u(x,t))|^2 dx dt + C \int_{\tau_1}^{\tau_2} \int_{B_r} |v \psi^2 \eta^2 (x,t)|^2 dx dt,
	\end{align}
	where $C=C(n,s)$ and $v(x,t) = u(x,t)-k$ for some $k\in\mathbb{R}$.
\end{proposition}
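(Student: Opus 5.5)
We test the equation with $\varphi = v\psi^2\eta^2$, where $v=u-k$. This test function is admissible: $u(\cdot,t)\in D(\mathcal{L})\subset H_0^1(\Omega)$ and $\psi\in C_0^\infty(B_r)$, so $\varphi(\cdot,t)\in H_0^1(\Omega)$ with support in $B_r\times[\tau_1,\tau_2]$. We skip the time regularization (Steklov averages), as stated. Since $u\in C^1((0,T),L^2(\Omega))$ and $v_t=u_t$, we multiply \eqref{1.1} by $\varphi$, integrate over $\Omega\times(\tau_1,t)$ for arbitrary $t\in(\tau_1,\tau_2)$, and use the weak form given by the Dirichlet form \eqref{dirichlet.form}:
\begin{equation*}
\int_{\tau_1}^{t}\!\!\int v_t v\psi^2\eta^2 + \int_{\tau_1}^t \mathcal{E}(u,v\psi^2\eta^2)\,d\tau = \int_{\tau_1}^t\!\!\int f(u)\, v\psi^2\eta^2 .
\end{equation*}
Constants are annihilated by $\mathcal{E}$, so $\mathcal{E}(u,\cdot)=\mathcal{E}(v,\cdot)$ (formally, $v\notin L^2(\mathbb{R}^n)$, but the differences $v(x)-v(y)$ coincide with those of $u$). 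We then treat each term separately.

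\textbf{Time term.} Writing $v_tv=\frac12(v^2)_t$ and integrating by parts in time, using $\eta(\tau_1)=0$, gives $\frac12\int_{B_r}v^2\psi^2\eta^2(x,t)\,dx-\int\!\!\int v^2\psi^2\eta\eta_t$. The second piece produces the $\eta|\eta_t|$ term on the right.

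\textbf{Local term.} Expand $\nabla v\cdot\nabla(v\psi^2)=|\nabla v|^2\psi^2+2v\psi\nabla v\cdot\nabla\psi$. Young's inequality absorbs half of $|\nabla v|^2\psi^2\eta^2$ and leaves $\int|\nabla\psi|^2v^2\eta^2$. A sharper choice of Young constants gives this term with coefficient $1$, as in the statement; otherwise a harmless constant appears.

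\textbf{Nonlocal term.} Split $\mathbb{R}^n\times\mathbb{R}^n$ into $B_r\times B_r$ and the two symmetric pieces $B_r\times(\mathbb{R}^n\setminus B_r)$, since $\varphi$ vanishes off $B_r$.
\begin{itemize}
\item On $B_r\times B_r$, use the standard algebraic inequality (as in \cite{D2024}, \cite{DZZ2021}):
\begin{equation*}
(a-b)(a\psi_1^2-b\psi_2^2)\ge \tfrac12|a\psi_1-b\psi_2|^2-C\max\{|a|,|b|\}^2|\psi_1-\psi_2|^2 .
\end{equation*}
This yields the Gagliardo term of $v\psi$ on the left and the $\max\{|v(x)|,|v(y)|\}^2|\psi(x)-\psi(y)|^2$ term on the right.
\item On the off-diagonal piece, $\psi(y)=0$, so the integrand is $(v(x)-v(y))v(x)\psi^2(x)|x-y|^{-n-2s}$. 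The part containing $v(x)^2$ is nonnegative and can be dropped. The remainder is bounded by $\int_{B_r}|v(x)|\psi^2\eta^2\int_{\mathbb{R}^n\setminus B_r}|v(y)||x-y|^{-n-2s}\,dy\,dx$, which is the tail term.
\end{itemize}

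\textbf{Reaction term.} Apply Young's inequality: $|f(u)v\psi^2\eta^2|\le\frac12|f(u)|^2+\frac12|v\psi^2\eta^2|^2$. Since $0\le\psi,\eta$ may be normalized so that $\psi\le1$, $\eta\le 1$ (or simply keep the factor), this gives the last two terms on the right.

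\textbf{Conclusion.} Every right-hand side term is nonnegative, so all integrals over $(\tau_1,t)$ may be enlarged to $(\tau_1,\tau_2)$. Taking the supremum over $t$ for the first left term, and $t=\tau_2$ for the gradient and Gagliardo terms, then summing the two resulting inequalities gives \eqref{eq.caccio.ineq} with $C=C(n,s)$. The constant $C_{n,s}$ is absorbed into $C$.

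\textbf{Main obstacle.} The delicate step is the nonlocal splitting. We must verify the algebraic inequality with the max-type error term, and check the sign of the dropped off-diagonal term: $v(x)^2\psi^2(x)\ge0$, so dropping it is legitimate. We must also ensure the symmetric doubling of the off-diagonal region only affects constants. A lesser technical point is justifying the test function rigorously, which requires the omitted time regularization and $u_t\in L^2$.
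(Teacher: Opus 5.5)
Your proposal is correct and follows the paper's proof essentially step for step. It uses the same test function $v\psi^2\eta^2$ and the same handling of the time, local and reaction terms (Young's inequality giving $\tfrac12|f(u)|^2+\tfrac12|v\psi^2\eta^2|^2$), then enlarges the time intervals and takes the supremum; your explicit splitting of the nonlocal form fills in what the paper delegates to \cite{DZZ2021}. Your algebraic inequality actually follows from the identity $(a-b)(a\psi_1^2-b\psi_2^2)=|a\psi_1-b\psi_2|^2-ab(\psi_1-\psi_2)^2$.

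One small correction: no choice of Young constants gives coefficient $1$ on $\int|\nabla\psi|^2v^2\eta^2$ once the left-hand terms are normalized. You get at least $\frac{1}{\epsilon(1-\epsilon)}\ge 4$, before even accounting for the factor $\tfrac12$ on the time term. So that term must carry a constant $C$, exactly as it does in the final inequality of the paper's own proof.
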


\begin{proof}
	First, define $v(x,t) = (u-k)(x,t)$ for every $(x,t)\in\Omega\times[0,T)$ with some $k\in\mathbb{R}$. Next, choose $\varphi(x,t) = v(x,t) \psi^2(x) \eta^2(t)$ as a test function for equation \eqref{1.1}, which after integrating over $[t_1,t_2] \subset (\tau_1,\tau_2)$, yields
	\begin{equation}
	\label{eq.caccio.1}
	\begin{aligned} 
		& \int_{t_1}^{t_2} \int_{B_r}  u_t(x,t) (v \psi^2 \eta^2)(x,t) dx dt + \int_{t_1}^{t_2} \mathcal{B}(u, v\psi^2\eta^2) dt \\
		+& \; \int_{t_1}^{t_2} \int_{B_r}\hspace{-0.25cm} \nabla u \nabla (v \psi^2 \eta^2) (x,t) dx dt - \int_{t_1}^{t_2} \int_{B_r} f(u(x,t)) (v\psi^2\eta^2) (x,t) dx dt = 0,
	\end{aligned}
\end{equation}
	where
	\begin{equation*}
	\mathcal{B}(u,v) = \frac{C_{n,s}}{2} \iint_{\mathbb{R}^n \times \mathbb{R}^n} \frac{(u(x,t)-u(y,t))(v(x,t)-v(y,t))}{|x-y|^{n+2s}} dx dy.
\end{equation*}
	Let us treat the terms in \eqref{eq.caccio.1} one by one. First, by noticing that $u_t=v_t$, the term with the time derivative can be expanded into
	\begin{align*}
	\mathcal{I}_1 =& \; \int_{t_1}^{t_2} \int_{B_r} u_t(x,t) (v \psi^2 \eta^2)(x,t) dx dt \\
	&= \frac{1}{2}\int_{t_1}^{t_2} \int_{B_r} (v^2 \eta^2)_t \psi^2 (x,t) dx dt - \int_{t_1}^{t_2} \int_{B_r} v^2 \eta \eta_t \psi^2 (x,t) dx dt \\
	&= \frac{1}{2} \left(\int_{B_r} v^2 \eta^2 \psi^2 (x,t_2) dx - \int_{B_r} v^2 \eta^2 \psi^2 (x,t_1) dx  \right)  - \int_{t_1}^{t_2} \int_{B_r} v^2 \eta \eta_t \psi^2 (x,t) dx dt,
	\end{align*}
	and since we know that $\eta(t) \equiv 0$ for every $t\leq \tau_1$, then it is clear that
	\begin{equation} \label{eq.caccio.2}
	\mathcal{I}_1 =  \frac{1}{2} \int_{B_r} v^2 \eta^2 \psi^2 (x,t_2) dx - \int_{t_1}^{t_2} \int_{B_r} v^2 \eta \eta_t \psi^2 (x,t) dx dt.
	\end{equation}
	Next, to estimate the nonlocal integral we proceed as usual: the integral over $dy$ is split in two parts: one over $\Omega$ and another one over $\mathbb{R}^n \setminus \Omega$. Then both integrals are estimated separately. The process is long and tedious, but fairly standard, so we refer to \cite{DZZ2021} for more details, and state that
	\begin{align} \label{eq.caccio.3}
		\nonumber \mathcal{I}_2 =& \; \int_{t_1}^{t_2} \mathcal{B}(u, v\psi^2\eta^2) dt  \\
		\geq& \; C \int_{t_1}^{t_2} \int_{B_r} \int_{B_r} \frac{|v\psi(x,t) - v\psi(y,t)|^2 \eta^2(t)}{|x-y|^{n+2s}} dydxdt \\
		\nonumber &-  C \int_{t_1}^{t_2} \int_{B_r} \int_{B_r} \frac{\max\{|v(x,t)|,|v(y,t)|\}^2}{|x-y|^{n+2s}}  |\psi(x)-\psi(y)|^2 \eta^2(t) dx dy dt \\
		\nonumber &-  C \int_{t_1}^{t_2} \int_{B_r} \left( \int_{\mathbb{R}^n \setminus B_r} \frac{|v(y,t)|}{|x-y|^{n+2s}} dy \right) |v(x,t)| \psi^2(x) \eta^2(t) dx dt.
	\end{align}
	Then, bearing in mind that $\nabla u = \nabla v$, let us expand the local part of the operator as usual:
	\begin{align}\label{eq.caccio.4}
		\nonumber \mathcal{I}_3 =& \; \int_{t_1}^{t_2} \int_{B_r} \nabla u \nabla (v \psi^2 \eta^2) dx dt \\
		 =& \; \int_{t_1}^{t_2} \int_{B_r} |\nabla v|^2 \psi^2 \eta^2 dx dt + 2 \int_{t_1}^{t_2} \int_{B_r} v \nabla v \psi \nabla \psi \eta^2 dx dt,
	\end{align}
	and notice that, by Young's inequality,
	\begin{align} \label{eq.caccio.5}
	\nonumber - 2 \int_{t_1}^{t_2} \int_{B_r} v \nabla v \psi \nabla \psi \eta^2 dx dt &\leq \, 2 \int_{t_1}^{t_2} \int_{B_r} |v \nabla \psi| |\psi \nabla v| \eta^2 dx dt \\
	&\leq \, \frac{1}{2} \int_{t_1}^{t_2} \int_{B_r} |\nabla v|^2 \psi^2 \eta^2 dx dt + 2 \int_{t_1}^{t_2} \int_{B_r} |\nabla \psi|^2 v^2 \eta^2 dx dt.
	\end{align}
	For the last integral, we use Hölder's and Young's inequalities to find that
	\begin{align} \label{eq.caccio.6}
		\nonumber \mathcal{I}_4 =& \; \int_{t_1}^{t_2} \int_{B_r} f(u(x,t)) (v\psi^2\eta^2) (x,t) dx dt \\
		&\leq \left(\int_{t_1}^{t_2} \int_{B_r} |f(u(x,t))|^2 dx dt \right)^{\frac{1}{2}} \left(\int_{t_1}^{t_2} \int_{B_r} |v \psi^2 \eta^2 (x,t)|^2 dx dt \right)^{\frac{1}{2}} \\
		\nonumber &\leq \frac{1}{2} \int_{t_1}^{t_2} \int_{B_r} |f(u(x,t))|^2 dx dt + \frac{1}{2} \int_{t_1}^{t_2} \int_{B_r} |v \psi^2 \eta^2 (x,t)|^2 dx dt
	\end{align}
	Finally, using \eqref{eq.caccio.2}, \eqref{eq.caccio.3}, \eqref{eq.caccio.4}, \eqref{eq.caccio.5} and \eqref{eq.caccio.6} and remembering that \eqref{eq.caccio.1} gave us
	\begin{equation*}
	 	\mathcal{I}_1 + \mathcal{I}_2 + \mathcal{I}_3 - \mathcal{I}_4 = 0,
	\end{equation*}	
	we get
	\begin{align*}
		\int_{B_r} & v^2 \eta^2 \psi^2 (x,t_2) dx + \int_{t_1}^{t_2} \int_{B_r} \int_{B_r} \frac{|v\psi(x,t) - v\psi(y,t)|^2 \eta^2(t)}{|x-y|^{n+2s}} dydxdt +\int_{t_1}^{t_2} \int_{B_r} |\nabla v|^2 \psi^2 \eta^2 (x,t) dx dt \\
		 \leq& \; C \int_{\tau_1}^{\tau_2} \int_{B_r} v^2 \eta \eta_t \psi^2 (x,t) dx dt +  C \int_{\tau_1}^{\tau_2} \int_{B_r} \int_{B_r} \frac{\max\{|v(x,t)|,|v(y,t)|\}^2}{|x-y|^{n+2s}}  |\psi(x)-\psi(y)|^2 \eta^2(t) dx dy dt \\
		 &+ C \int_{\tau_1}^{\tau_2} \int_{B_r} \left( \int_{\mathbb{R}^n \setminus B_r} \frac{|v(y,t)|}{|x-y|^{n+2s}} dy \right) |v(x,t)| \psi^2(x) \eta^2(t) dx dt + C \int_{\tau_1}^{\tau_2} \int_{B_r} |\nabla \psi|^2 v^2 \eta^2 dx dt \\
		 & + C \int_{\tau_1}^{\tau_2} \int_{B_r} |f(u(x,t))|^2 dx dt + C \int_{\tau_1}^{\tau_2} \int_{B_r} |v \psi^2 \eta^2 (x,t)|^2 dx dt,
	\end{align*}
	where we have used that all the terms in the right hand side are positive to expand their limits of integration to the whole $(\tau_1,\tau_2)$. Then, since the right hand side is independent of $t_1$ and $t_2$, we can take the supremum on the left hand side over $(\tau_1,\tau_2)$ as well to get the result. \qed
\end{proof}

Next, we finish our Caccioppoli inequality via an appropriate choice of $\psi$ and $\eta$.

\begin{corollary}
	Let $u$ be the unique solution of problem \eqref{1.1}. Let $(x_0,t_0) \in \Omega \times(0,T)$ and $r>0$ such that $\overline{B}_r \subset \Omega$ and $[t_0-r^2,t_0] \subset [0,T)$. Then it holds that
	\begin{align} \label{eq.caccio.final}
		\nonumber \sup_{t\in(t_0-r^2/4,t_0)}& \left( \frac{1}{r^2}\fint_{B_r} |u(x,t)-(u)_{Q_r}|^2\psi^2(x,t) dx \right) + \fint_{t_0-r^2/4}^{t_0} \fint_{B_{r/2}} |\nabla u(x,t)|^2 \\
		\nonumber& + \fint_{t_0-r^2/4}^{t_0} \int_{B_{r/2}} \fint_{B_{r/2}} \frac{|u(x,t)-u(y,t)|^2}{|x-y|^{n+2s}} dx dy dt \\
		\leq & \; \left(\frac{C}{r^2} + \frac{C}{r^{2s}} + C \right) \fint_{t_0-r^2}^{t_0} \fint_{B_r} |u(x,t)-(u)_{Q_r}|^2(x,t) dx dt  \\
		\nonumber& + C \int_{t_0-r^2}^{t_0} \left(\int_{\mathbb{R}^n \setminus B_r} \frac{|u(y,t)-(u)_{Q_r}|}{|x_0-y|^{n+2s}}dy \right)^2 dt \\
		\nonumber& + C \fint_{t_0-r^2}^{t_0} \fint_{B_r} |f(u(x,t))|^2 dx dt,
	\end{align}
	where $C=C(n,s)$.
\end{corollary}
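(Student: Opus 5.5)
We apply Proposition \ref{prop.cacciopoli} with $k=(u)_{Q_r}$, so that $v=u-(u)_{Q_r}$, on the time interval $[\tau_1,\tau_2]=[t_0-r^2,t_0]$. The cut-offs are chosen in the standard way. Take $\psi\in C_0^\infty(B_{3r/4})$ with $0\le\psi\le1$, $\psi\equiv1$ on $B_{r/2}$ and $|\nabla\psi|\le C/r$. Take $\eta\in C^\infty(\mathbb{R})$ with $0\le\eta\le1$, $\eta\equiv0$ for $t\le t_0-r^2$, $\eta\equiv1$ for $t\ge t_0-r^2/4$ and $|\eta_t|\le C/r^2$. Strictly speaking, the proposition requires $\eta\equiv0$ for $t\le\tau_1$ with $\tau_1>0$, so we take $\tau_1=t_0-r^2$, which is allowed since $[t_0-r^2,t_0]\subset[0,T)$; if $t_0-r^2=0$ we shift by a small amount and let it tend to zero. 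We then bound each term on the right of \eqref{eq.caccio.ineq}. At the end we divide everything by $|Q_r|\sim r^{n+2}$ and use $|B_{r/2}|\sim|B_r|$ and $r^2/4\sim r^2$ to turn the integrals into averages.

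The local terms are immediate. The $\eta_t$ term gives at most $Cr^{-2}\int_{Q_r}v^2$, and the $|\nabla\psi|^2$ term gives the same bound. The last two terms give $C\int_{Q_r}|f(u)|^2$ and $C\int_{Q_r}v^2$, since $\psi,\eta\le1$; these produce the constant $C$ in the prefactor. For the nonlocal energy term we use $|\psi(x)-\psi(y)|\le Cr^{-1}|x-y|$ and $|\psi(x)-\psi(y)|\le 1$, together with $\max\{|v(x)|,|v(y)|\}^2\le v(x)^2+v(y)^2$ and symmetry. This reduces it to $\int_{B_r}v(x)^2\int_{B_r}\min\{r^{-2}|x-y|^{2},1\}|x-y|^{-n-2s}\,dy\,dx$. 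The inner integral is at most $C r^{-2s}$, because $\int_{|z|<r} r^{-2}|z|^{2-n-2s}\,dz\sim r^{-2s}$ and $\int_{|z|>r}|z|^{-n-2s}\,dz\sim r^{-2s}$. This produces the $C r^{-2s}$ factor.

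The tail term needs more care and is where I expect the main difficulty. Since $\operatorname{supp}\psi\subset B_{3r/4}$, for $x\in\operatorname{supp}\psi$ and $y\notin B_r$ we have $|x-y|\ge |y-x_0|-|x-x_0|\ge \tfrac14|y-x_0|$. Hence $|x-y|^{-n-2s}\le C|x_0-y|^{-n-2s}$, and the inner integral is bounded by $G(t):=\int_{\mathbb{R}^n\setminus B_r}|v(y,t)||x_0-y|^{-n-2s}dy$, independently of $x$. The term is then at most $C\int_{t_0-r^2}^{t_0}G(t)\int_{B_r}|v|\,dx\,dt$. By Cauchy--Schwarz in $x$ and Young's inequality this is bounded by
\begin{equation*}
C\int_{t_0-r^2}^{t_0} |B_r|\,G(t)^2\,dt + C\int_{Q_r}v^2 .
\end{equation*}
After dividing by $|Q_r|\sim r^{n+2}$, the first piece becomes $r^{-2}\int G^2\,dt$, not $\int G^2\,dt$ as in \eqref{eq.caccio.final}. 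To reconcile the powers of $r$, I would apply Young's inequality with a weight, $ab\le \tfrac{\delta}{2}a^2+\tfrac1{2\delta}b^2$, choosing $\delta$ as a suitable power of $r$; the displaced power then lands on the $\int v^2$ term and is absorbed into the prefactor $C/r^2+C/r^{2s}+C$. If the exact powers still fail to match, I would follow the normalization in \cite{D2024}, where the tail is measured in the corresponding scaled form.

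Finally, the left-hand side of \eqref{eq.caccio.ineq} dominates the left-hand side of \eqref{eq.caccio.final} because $\psi=\eta=1$ on $B_{r/2}\times(t_0-r^2/4,t_0)$ and $\nabla v=\nabla u$, $v(x)-v(y)=u(x)-u(y)$. Averaging and collecting all the estimates above gives \eqref{eq.caccio.final} with $C=C(n,s)$.
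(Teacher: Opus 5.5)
Your proposal is correct and follows essentially the same route as the paper: the same cut-offs with $k=(u)_{Q_r}$, the bound $|\psi(x)-\psi(y)|^2\le Cr^{-2}|x-y|^2$ producing the $Cr^{-2s}$ factor, the inequality $|x-y|\ge\frac14|x_0-y|$ for $x\in B_{3r/4}$, $y\notin B_r$ in the tail term, and a weighted Young inequality to balance the powers of $r$. The weight you left unspecified is $\delta=r^2$, i.e.\ $G(t)|B_r|^{1/2}\|v(\cdot,t)\|_{L^2(B_r)}\le \frac{r^2}{2}|B_r|G(t)^2+\frac{1}{2r^2}\|v(\cdot,t)\|_{L^2(B_r)}^2$; after integrating in time and dividing by $|Q_r|$ this gives exactly $\frac12\int_{t_0-r^2}^{t_0}G(t)^2\,dt+\frac{1}{2r^2}\fiint_{Q_r}v^2$, as in the paper's \eqref{eq.cutoff.3}, so no fallback to the normalization of \cite{D2024} is needed.
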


\begin{proof}
	Consider the following choices in the expression \eqref{eq.caccio.ineq}:  take $(\tau_1,\tau_2) = (t_0-r^2,t_0)$, take the spatial cutoff function $\psi$ such that $\psi \equiv 1$ in $B_{r/2}$, $\psi \equiv 0$ in $\mathbb{R}^n \setminus B_{3r/4}$, $\psi\leq 1$ in $B_r$ and $|\nabla \psi| \leq C/r$ in $B_r$, and the time cutoff function $\eta$ such that $\eta(t_0-r^2) = 0$, $\eta\equiv 1$ in $(t_0-r^2/4,t_0)$, $\eta \leq 1$ in $(t_0-r^2,t_0)$ and $|\eta_t| \leq C/r^2$ in $(t_0-r^2,t_0)$. Take as well $k = (u)_{Q_r}$. With this, and using that every single term is positive, we get
	\begin{align} \label{eq.cutoff.1}
		\nonumber& \sup_{t\in(t_0-r^2/4,t_0)} \left( \int_{B_r} v^2\psi^2(x,t) dx \right) + \int_{t_0-r^2/4}^{t_0} \int_{B_{r/2}} |\nabla v(x,t)|^2 \\
		\nonumber& + \int_{t_0-r^2/4}^{t_0} \int_{B_{r/2}} \int_{B_{r/2}} \frac{|v(x,t)-v(y,t)|^2}{|x-y|^{n+2s}} dx dy dt \\
		\leq & \; \frac{C}{r^2} \int_{t_0-r^2}^{t_0} \int_{B_r} v^2(x,t) dx dt + C \int_{t_0-r^2}^{t_0} \int_{B_r} v^2 (x,t) dx dt \\
		\nonumber & + C \int_{t_0-r^2}^{t_0} \int_{B_r} \int_{B_r} \frac{\max\{|v(x,t)|,|v(y,t)|\}^2}{|x-y|^{n+2s}} |\psi(x)-\psi(y)|^2 dx dy dt \\
		\nonumber& + C \int_{t_0-r^2}^{t_0} \int_{B_r} \left(\int_{\mathbb{R}^n \setminus B_r} \frac{|v(y,t)|}{|x-y|^{n+2s}}dy \right) |v(x,t)| dx dt + C \int_{t_0-r^2}^{t_0} \int_{B_r} |f(u(x,t))|^2 dx dt.
	\end{align}
	We then need to estimate the nonlocal terms and try to absorb what we can in our more practical terms. First, notice that, since $|\nabla \psi| \leq C/r$ in $B_r$, then
	\begin{equation*}
		\frac{|\psi(x)-\psi(y)|^2}{|x-y|^{n+2s}} \leq \frac{C}{r^2} \frac{1}{|x-y|^{n+2s-2}}.
	\end{equation*}
	With this, it is easy to see that the first nonlocal integral satisfies
	\begin{align} \label{eq.cutoff.2}
		\nonumber \int_{t_0-r^2}^{t_0} & \int_{B_r} \int_{B_r} \frac{\max\{|v(x,t)|,|v(y,t)|\}^2}{|x-y|^{n+2s}} |\psi(x)-\psi(y)|^2 dx dy dt \\
		\leq& \;\frac{C}{r^2} \int_{t_0-r^2}^{t_0} \int_{B_r} \int_{B_r} \frac{v^2(x,t)}{|x-y|^{n+2s-2}} dxdydt \\
		\nonumber \leq& \;\frac{C}{r^{2s}} \int_{t_0-r^2}^{t_0} \int_{B_r} v^2(x,t) dx dt  = \frac{C}{r^{2s}} \int_{t_0-r^2}^{t_0} \int_{B_r} |u(x,t)-(u)_{Q_r}|^2 dx dt.
	\end{align}	
	For the second integral, we recall that $\psi\equiv 0$ in $\mathbb{R}^n \setminus B_{3r/4}$. Then, if $y\in\mathbb{R}^n \setminus B_r$ and $x \in B_{3r/4}$:
	\begin{equation*}
		|x-y| \geq \frac{1}{4} |x_0-y|.
	\end{equation*}
	We can use this to estimate the second nonlocal integral. First, we take the average integral over $Q_r$ and then, with the help of Hölder's and Young's inequalities.
	\begin{align} \label{eq.cutoff.3}
		\nonumber \fint_{t_0-r^2}^{t_0} \fint_{B_r} & \left(\int_{\mathbb{R}^n \setminus B_r} \frac{|v(y,t)|}{|x-y|^{n+2s}}dy \right) |v(x,t)| dx dt \\
		\leq& \; C \left( \frac{1}{r^2} \int_{t_0-r^2}^{t_0} \left( \int_{\mathbb{R}^n \setminus B_r} \frac{|u(y,t)-(u)_{Q_r}|}{|x_0-y|^{n+2s}} dy\right)^2 dt \right)^{1/2} \left(\fint_{t_0-r^2}^{t_0} \fint_{B_r} |u(x,t)-(u)_{Q_r}|^2 dx dt \right)^{1/2} \\
		\nonumber \leq& \; C \int_{t_0-r^2}^{t_0} \left( \int_{\mathbb{R}^n \setminus B_r} \frac{|u(y,t)-(u)_{Q_r}|}{|x_0-y|^{n+2s}} dy\right)^2 dt + \frac{C}{r^2} \fint_{t_0-r^2}^{t_0} \fint_{B_r} |u(x,t)-(u)_{Q_r}|^2 dx dt.
	\end{align}
	Now, plugging \eqref{eq.cutoff.2} and \eqref{eq.cutoff.3} into \eqref{eq.cutoff.1}, we get
	\begin{align*}
		\nonumber \sup_{t\in(t_0-r^2/4,t_0)} & \left( \int_{B_r} |u(x,t)-(u)_{Q_r}|^2\psi^2(x,t) dx \right) + \int_{t_0-r^2/4}^{t_0} \int_{B_{r/2}} |\nabla u(x,t)|^2 \\
		\nonumber& + \int_{t_0-r^2/4}^{t_0} \int_{B_{r/2}} \int_{B_{r/2}} \frac{|u(x,t)-u(y,t)|^2}{|x-y|^{n+2s}} dx dy dt \\
		\leq & \; \left(\frac{C}{r^2} + \frac{C}{r^{2s}} + C \right) \int_{t_0-r^2}^{t_0} \int_{B_r} |u(x,t)-(u)_{Q_r}|^2(x,t) dx dt  \\
		\nonumber& + C r^{n+2} \int_{t_0-r^2}^{t_0} \left(\int_{\mathbb{R}^n \setminus B_r} \frac{|u(y,t)-(u)_{Q_r}|}{|x_0-y|^{n+2s}}dy \right)^2 dt + C \int_{t_0-r^2}^{t_0} \int_{B_r} |f(u(x,t))|^2 dx dt.
	\end{align*}
	Finally, dividing both sides by $r^{n+2}$ to get the average integrals on $x$ and $t$, we arrive at the desired result. \qed
\end{proof}

 \subsection{Difference estimate}
 Next, we want to establish an energy difference inequality with the solution of a problem for which we already know our desired regularity. We will then be able to use said inequality to transfer the regularity in some way. Define the auxiliary problem
    \begin{equation} \label{1.Das}
    \begin{cases}
        \displaystyle w_t(x,t)= \Delta w(x,t), & (x,t)\in B_{r/4} \times (t_0-r^2/16,t_0), \\
        w(x,t_0-r^2/16)=u(x,t_0-r^2/16), & x\in B_{r/4},\\
        w=u, & (x,t)\in\mathbb{R}^n \setminus B_{r/4} \times (t_0-r^2/16,t_0).
    \end{cases}
    \end{equation}
    We already know that $u\in C((0,T), H_0^1(\Omega))$, which implies that $u(\cdot,t_0-r^2/16) \in H^1(B_{r/4})$ and $u \in L^2 (\partial(B_{r/4}) \times (t_0-r^2/16,t_0)$. Then the initial data and boundary conditions are well prescribed, and it is clear that there exists a unique solution $w\in C((0,T),u+H_0^1(B_{r/4}))$ of problem \eqref{1.Das}, see \cite{E2010}.
    
    We state the Poincaré inequality and some regularity estimates that are well known for the solution to the heat equation. These stem from classical results found in any evolution PDE book such as \cite{L1996}.
    \begin{lemma} \label{lemma.heatproperties}
    	Let $w$ be the unique solution to system \eqref{1.Das}. Then, the following inequalities hold for every $d\in(0,r/8]$:
    	\begin{equation} \label{eq.poincare.w}
    		\fiint_{Q_d}|w-(w)_{Q_d}|^2 dx dt \leq C d^2 \fiint_{Q_d} |\nabla w|^2 dx dt,
    	\end{equation}
    	\begin{equation}
    		\fiint_{Q_d} w^2(x,t) dx dt \leq C \fiint_{Q_{r/8}} w^2(x,t) dx dt,
    	\end{equation}
    	\begin{equation}
    		\fiint_{Q_d} |w(x,t)-(w)_{Q_d}|^2 dx dt \leq C \left(\frac{d}{r}\right)^2 \fiint_{Q_{r/8}} |w(x,t)-(w)_{Q_{r/8}}|^2 dx dt,
    	\end{equation}
    	where $C=C(n)$.
    \end{lemma}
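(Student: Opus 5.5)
The plan is to derive all three inequalities from standard interior estimates for the caloric function $w$ on the cylinder $Q_{r/4} = B_{r/4}\times(t_0-r^2/16,t_0)$, which contains $Q_{r/8}$ and every $Q_d$ with $d\le r/8$. Interior parabolic regularity makes $w$ smooth in $Q_{r/8}$. Moreover, $w-(w)_{Q_{r/8}}$ is again caloric, so each estimate can be applied after subtracting constants.

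\textbf{Poincaré inequality \eqref{eq.poincare.w}.} I would first write
\[
|w(x,t)-(w)_{Q_d}|\le |w(x,t)-(w)_{B_d}(t)|+|(w)_{B_d}(t)-(w)_{Q_d}|.
\]
The first term is handled by the spatial Poincaré inequality on $B_d$ at each fixed time, which gives $Cd^2\fint_{B_d}|\nabla w|^2$.

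For the second term, the spatial average $(w)_{B_d}(t)$ oscillates in time. Testing the heat equation against a cutoff $\zeta\in C_0^\infty(B_d)$ shows
\[
\Bigl|\tfrac{d}{dt}\!\int w\zeta\Bigr| = \Bigl|\int\nabla w\cdot\nabla\zeta\Bigr|\le Cd^{-1}\!\int_{B_d}|\nabla w|.
\]
Integrating this over the time interval of length $d^2$ and using Cauchy--Schwarz, the weighted-average oscillation is bounded by $Cd^2\fiint_{Q_d}|\nabla w|^2$. Replacing the plain average by the weighted one costs another spatial Poincaré term. This is the standard parabolic Poincaré argument, with $C=C(n)$.

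\textbf{Second inequality (sup bound).} I would use the local $L^\infty$ estimate for caloric functions, obtained from Moser iteration or the mean value property:
\[
\sup_{Q_{r/16}}w^2\le C\fiint_{Q_{r/8}}w^2 .
\]
For $d\le r/16$ the average over $Q_d$ is bounded by this supremum. For $r/16<d\le r/8$ the estimate is immediate, since $|Q_{r/8}|/|Q_d|\le 2^{n+2}$.

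\textbf{Third inequality (Campanato decay).} Set $\tilde w=w-(w)_{Q_{r/8}}$, which is caloric. Combine the Poincaré inequality \eqref{eq.poincare.w} with the interior gradient bound
\[
\sup_{Q_{r/16}}|\nabla \tilde w|^2\le \frac{C}{r^2}\fiint_{Q_{r/8}}\tilde w^2 ,
\]
which comes from Caccioppoli plus the sup bound applied to derivatives, since $\partial_i w$ is caloric. For $d\le r/16$ this gives
\[
\fiint_{Q_d}|w-(w)_{Q_d}|^2\le Cd^2\sup_{Q_{r/16}}|\nabla w|^2\le C\left(\tfrac dr\right)^2\fiint_{Q_{r/8}}|w-(w)_{Q_{r/8}}|^2 .
\]
For $r/16<d\le r/8$, the average minimizes $\fiint_{Q_d}|w-\lambda|^2$ over constants $\lambda$. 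Taking $\lambda=(w)_{Q_{r/8}}$ and comparing measures then closes the estimate, because $(d/r)^2$ is bounded below in that range.

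\textbf{Main obstacle.} The step needing the most care is the interior gradient (derivative) estimate with the correct scaling in $r$. The planned route is to differentiate the equation, apply Caccioppoli on nested cylinders, and then invoke the $L^\infty$ bound for caloric functions. This is classical, as in \cite{L1996}, and I would cite it rather than rederive it.
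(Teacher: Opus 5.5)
Your proposal is correct and matches the paper's approach. The paper gives no proof here and simply attributes all three inequalities to classical interior estimates for the heat equation from standard evolution-PDE references. Your sketch supplies exactly that standard derivation: the parabolic Poincaré inequality obtained by testing the equation against a spatial cutoff, the local $L^\infty$ bound for caloric functions, and the interior gradient bound applied to $w-(w)_{Q_{r/8}}$. You also correctly handle the range $r/16<d\le r/8$ by comparing measures.
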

    
    Let us now find the energy estimate on $u-w$.
    
    \begin{proposition} \label{prop.difference}
    	Let $u$ be the unique solution of problem \eqref{1.1} and $w$ the unique solution of \eqref{1.Das}. Let $(x_0,t_0) \in \Omega \times (0,T)$ and $r>0$ such that $\overline{B}_r \subset \Omega$ and $[t_0-r^2,t_0] \subset [0,T)$. Then it holds that
    	\begin{align} \label{eq.difference.estimate}
    	\nonumber\frac{1}{2} & \int_{B_{r/4}} (u-w)^2(x,t_0) dx + \iint_{Q_{r/4}} |\nabla(u-w)(x,t)|^2 dx dt \\
    	\leq& \; C r^{2(1-s)} \int_{t_0-r^2/4}^{t_0} \int_{B_{r/2}} \int_{B_{r/2}} \frac{|u(x,t)-u(y,t)|^2}{|x-y|^{n+2s}} dx dy dt + C r^{n+2} \int_{t_0-r^2}^{t_0} \left( \int_{\mathbb{R}^n \setminus B_r} \frac{|u(y,t)-(u)_{Q_r}|}{|x_0-y|^{n+2s}} dy\right)^2 dt \\
    	\nonumber &+ C r^{2-4s}  \int_{t_0-r^2}^{t_0} \int_{B_{r}} |u(x,t) - (u)_{Q_r}|^2 dx dt +  Cr^2 \iint_{Q_{r/4}} |f(u(x,t))|^2 dx dt,
    	\end{align}
    	where $C=C(n,s)$.
    \end{proposition}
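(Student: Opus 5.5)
We will test the difference of the two equations with $\varphi = (u-w)\chi$, where $\chi$ is the indicator of $B_{r/4}$ in space. On $Q_{r/4}$, $u$ solves $u_t - \Delta u = -(-\Delta)^s u + f(u)$, while $w_t-\Delta w=0$. Hence $z:=u-w$ solves
\begin{equation*}
z_t - \Delta z = -(-\Delta)^s u + f(u) \quad \text{in } B_{r/4}\times(t_0-r^2/16,t_0),
\end{equation*}
with $z=0$ on the lateral boundary and at the initial time $t_0-r^2/16$. Since $z(\cdot,t)\in H_0^1(B_{r/4})$, it is an admissible test function. Integrating over $(t_0-r^2/16,t_0)$, the time term gives $\frac12\int_{B_{r/4}} z^2(x,t_0)\,dx$ because $z$ vanishes initially, and the Laplacian gives $\iint_{Q_{r/4}}|\nabla z|^2$. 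This produces exactly the left-hand side of \eqref{eq.difference.estimate}, so it remains to bound the two source terms.

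\textbf{Reaction term.} We bound $\iint_{Q_{r/4}} f(u)\,z$ by Hölder and then Poincaré in $B_{r/4}$ for each fixed time, using $\|z(\cdot,t)\|_{L^2}\le Cr\|\nabla z(\cdot,t)\|_{L^2}$. Young's inequality then gives
\begin{equation*}
\tfrac14\iint_{Q_{r/4}}|\nabla z|^2 + Cr^2\iint_{Q_{r/4}}|f(u)|^2 .
\end{equation*}
The gradient piece is absorbed into the left-hand side.

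\textbf{Nonlocal term.} We must estimate $\int \mathcal{B}(u,z)\,dt$ with $z$ supported in $B_{r/4}$. We split $\mathbb{R}^n\times\mathbb{R}^n$ into $B_{r/2}\times B_{r/2}$ and the cross regions $x\in B_{r/4}$, $y\notin B_{r/2}$ (twice, by symmetry).

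\emph{Near part.} By Cauchy--Schwarz it is bounded by $[u]_{H^s(B_{r/2})}[z]_{H^s(B_{r/2})}$. Since $z\in H^1_0(B_{r/4})$, the scaled embedding $H^1_0\hookrightarrow H^s$ combined with Poincaré gives $[z]^2_{H^s(\mathbb{R}^n)}\le C r^{2-2s}\|\nabla z\|^2_{L^2}$. Young's inequality then yields
\begin{equation*}
\epsilon\|\nabla z\|^2+ C r^{2(1-s)}\int_{t_0-r^2/4}^{t_0}[u]^2_{H^s(B_{r/2})}\,dt,
\end{equation*}
which is the first term on the right of \eqref{eq.difference.estimate}.

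\emph{Cross part.} Since $z(y)=0$ there, the integrand is $(u(x)-u(y))z(x)/|x-y|^{n+2s}$. We write $u(x)-u(y)=(u(x)-k)-(u(y)-k)$ with $k=(u)_{Q_r}$ and use $|x-y|\ge \frac12|x_0-y|$.

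The $u(y)-k$ piece, for $y\in B_r\setminus B_{r/2}$, is controlled by $r^{-n-2s}\int_{B_r}|u-k|$. For $y\notin B_r$ it is controlled by the tail integral. In both cases it multiplies $\int_{B_{r/4}}|z|\le Cr^{n/2}\|z\|_{L^2}\le Cr^{n/2+1}\|\nabla z\|_{L^2}$. Young's inequality gives the tail term with factor $r^{n+2}$ and an $L^2$ term of the form $r^{2-4s}\int_{B_r}|u-k|^2$, using Cauchy--Schwarz on $\int_{B_r}|u-k|$.

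The $u(x)-k$ piece has $\int_{\mathbb{R}^n\setminus B_{r/2}}|x-y|^{-n-2s}dy\le Cr^{-2s}$. It is therefore bounded by $Cr^{-2s}\|u-k\|_{L^2(B_{r/4})}\|z\|_{L^2}\le Cr^{1-2s}\|u-k\|\,\|\nabla z\|$, which again yields $r^{2-4s}\int|u-k|^2$ after Young's inequality.

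We integrate all of these in time, enlarging the time interval to $(t_0-r^2,t_0)$ where the terms are positive, and absorb every $\epsilon\|\nabla z\|^2$ into the left-hand side.

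The main obstacle is the scaling bookkeeping in the cross terms, namely matching the powers $r^{n+2}$ and $r^{2-4s}$ exactly. It also requires justifying the testing, since $z$ is only $H^1$ in space and $z_t$ needs a Steklov-average regularization, as in the Caccioppoli step.
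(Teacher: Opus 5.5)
Your proposal is correct and follows the paper's route: you test both equations with $z=u-w$, obtain $\frac12\int_{B_{r/4}}z^2(x,t_0)\,dx+\iint_{Q_{r/4}}|\nabla z|^2$ on the left, and handle the reaction term by H\"older, Poincar\'e and Young exactly as the paper does. The only difference is that the paper cites \cite{D2024} for the bound on $\int\mathcal{B}(u,z)\,dt$, whereas you carry out the near/cross splitting yourself; your scalings ($[z]_{H^s}^2\le Cr^{2-2s}\|\nabla z\|^2$, $|x-y|\ge\frac12|x_0-y|$, $\int_{B_{r/4}}|z|\le Cr^{n/2+1}\|\nabla z\|$) correctly produce the factors $r^{2(1-s)}$, $r^{n+2}$ and $r^{2-4s}$.
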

    
    \begin{proof}
    	The first step of this proof should again be the regularization procedure with respect to the time variable we mentioned earlier. We skip the step and refer to \cite{DZZ2021} again.
    	
    	The function $z = u-w$ is clearly in $C((t_0-r^2/16,t_0), H_0^1(B_{r/4}))$, and satisfies that $z\equiv 0$ on $\mathbb{R}^n \setminus B_{r/4}$. Then it is clear that we can use it as a test function on both \eqref{1.1} and \eqref{1.Das}. We do so, integrate in time over $(t_0-r^2/16,t_0)$ and subtract both expressions to get
    	\begin{align} \label{eq.difference.1}
    		\nonumber \frac{1}{2} & \iint_{Q_{r/4}} ((u-w)^2)_t(x,t) dx dt + \iint_{Q_{r/4}} |\nabla(u-w)(x,t)|^2 dx dt \\
    		 &+ \int_{t_0-r^2/16}^{t_0} \mathcal{B}(u, u-w) dt - \iint_{Q_{r/4}} f(u(x,t)) (u-w)(x,t) dx dt = 0.
    	\end{align}
    	Let us again study the terms one by one. First, the term with the time derivative turns directly into
    	\begin{equation} \label{eq.difference.2}
    	\frac{1}{2} \iint_{Q_{r/4}} ((u-w)^2)_t(x,t) dx dt = \frac{1}{2} \int_{B_{r/4}} (u-w)^2(x,t_0) dx.
    	\end{equation}
    	The nonlocal term is again a difficult beast to deal with, so we refer to the calculations done in \cite{D2024} and state that
    	\begin{align} \label{eq.difference.3}
    		\nonumber \Bigg| & \int_{t_0-r^2/16}^{t_0}  \mathcal{B}(u, u-w) dt \Bigg| \\
    		\leq& \; C r^{2(1-s)} \int_{t_0-r^2/4}^{t_0} \int_{B_{r/2}} \int_{B_{r/2}} \frac{|u(x,t)-u(y,t)|^2}{|x-y|^{n+2s}} dx dy dt + \frac{1}{4}\int_{t_0-r^2/16}^{t_0} \int_{B_{r/2}} |\nabla (u-w)(x,t)|^2 dx dt \\
    		\nonumber &+ C r^{2-4s}  \int_{t_0-r^2}^{t_0} \int_{B_{r}} |u(x,t) - (u)_{Q_r}|^2 dx dt + C r^{n+2} \int_{t_0-r^2}^{t_0} \left( \int_{\mathbb{R}^n \setminus B_r} \frac{|u(y,t)-(u)_{Q_r}|}{|x_0-y|^{n+2s}} dy\right)^2 dt.
    	\end{align}
    	Notice that in \cite{D2024} the calculations are done with the unnormalized integral fractional laplacian, while we are using the normalized one. This is not a problem, since the constant $C_{n,s}$ can be absorbed in the generic constants, and the factor $1/4$ can be obtained by choosing the parameter in Young's inequality appropriately. Finally, the reaction term can be estimated like follows by use of the Poincaré, Hölder's and Young's inequalities.
    	\begin{align} \label{eq.difference.4}
    	\nonumber \iint_{Q_{r/4}} & |f(u(x,t))||(u-w)(x,t)| dx dt \\
    	 \nonumber&\leq  \int_{t_0-r^2/16}^{t_0} \|f(u(\cdot,t))\|_{L^2(B_{r/4})} \|(u-w)(\cdot,t)\|_{L^2(B_{r/4})} dt \\
    	&\leq Cr \int_{t_0-r^2/16}^{t_0} \|f(u(\cdot,t))\|_{L^2(B_{r/4})} \|\nabla (u-w)(\cdot,t)\|_{L^2(B_{r/4})} dt \\
    	\nonumber &\leq \frac{1}{4} \iint_{Q_{r/4}} |\nabla(u-w)|^2 dx dt + Cr^2 \iint_{Q_{r/4}} |f(u(x,t))|^2 dx dt.
    \end{align}
    	Then, plugging \eqref{eq.difference.2}, \eqref{eq.difference.3} and \eqref{eq.difference.4} into \eqref{eq.difference.1}, we get the desired result.  \qed   	
    \end{proof}
    
    Finally, we merge both crucial estimates we have found to get rid of the purely nonlocal term on \eqref{eq.difference.estimate}.
    
    \begin{corollary}
    	Let $u$ be the unique solution of problem \eqref{1.1} and $w$ the unique solution of \eqref{1.Das}. Let $(x_0,t_0) \in \Omega \times (0,T)$ and $r\in(0,1]$ such that $\overline{B}_r \subset \Omega$ and $[t_0-r^2,t_0] \subset [0,T)$. Then, for every $\varepsilon<1-s$, it holds that
    	\begin{align} \label{eq.merged.estimate}
   \nonumber  &\fiint_{Q_{r/4}} |(u-w)(x,t)|^2 dx dt \leq \; Cr^2  \fiint_{Q_{r/4}} |\nabla(u-w)(x,t)|^2 dx dt \\
    	\leq&\; C r^{\varepsilon} \left[ r^{2-\varepsilon} \int_{t_0-r^2}^{t_0}\left(\int_{\mathbb{R}^n \backslash B_r} \frac{\left|u(y,t)-(u)_{Q_r}\right|}{|y-x_0|^{n+2s}} d y\right)^2d t \right. \\ \nonumber
    	&\left. + \fiint_{Q_r}\left|u(x,t)-(u)_{Q_r}\right|^2 dxdt + r^{2-2s-\varepsilon} \fiint_{Q_r} |f(u(x,t))|^2 dx dt \right]
    \end{align}
    \end{corollary}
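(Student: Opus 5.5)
The estimate combines Proposition \ref{prop.difference} with the Caccioppoli inequality \eqref{eq.caccio.final}. It then converts every integral to an average and collects the powers of $r$.

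\textbf{First inequality.} Since $z=u-w$ vanishes outside $B_{r/4}$ for a.e. $t$, the Poincaré inequality in $H_0^1(B_{r/4})$ gives
\begin{equation*}
\int_{B_{r/4}}|z(x,t)|^2dx\le Cr^2\int_{B_{r/4}}|\nabla z(x,t)|^2dx
\end{equation*}
for each $t$. Integrating over $(t_0-r^2/16,t_0)$ and dividing by $|Q_{r/4}|\sim r^{n+2}$ yields the first inequality.

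\textbf{Second inequality.} Start from \eqref{eq.difference.estimate}, drop the nonnegative term $\frac12\int_{B_{r/4}}(u-w)^2(x,t_0)\,dx$, and divide by $r^{n+2}$; this also turns the $f$-term into $Cr^2\fiint_{Q_{r/4}}|f(u)|^2$. The only term not already of the desired form is the purely nonlocal one,
\begin{equation*}
Cr^{2(1-s)}\int_{t_0-r^2/4}^{t_0}\int_{B_{r/2}}\int_{B_{r/2}}\frac{|u(x,t)-u(y,t)|^2}{|x-y|^{n+2s}}\,dx\,dy\,dt .
\end{equation*}
After dividing by $r^{n+2}$ this equals, up to constants, $r^{2-2s}$ times the averaged Gagliardo term on the left of \eqref{eq.caccio.final}. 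Applying \eqref{eq.caccio.final} bounds it by
\begin{equation*}
r^{2-2s}\Big[\Big(\tfrac{C}{r^2}+\tfrac{C}{r^{2s}}+C\Big)\fiint_{Q_r}|u-(u)_{Q_r}|^2+C\int_{t_0-r^2}^{t_0}\mathrm{Tail}^2\,dt+C\fiint_{Q_r}|f(u)|^2\Big].
\end{equation*}

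\textbf{Collecting powers of $r$.} Multiply each term by $r^{2(1-s)}\cdot r^{-2}$, then compare with $r^{\varepsilon}$, using $r\le1$ and $\varepsilon<1-s$.

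\emph{Averaged oscillation terms.} Their coefficients are $r^{-2s}$, $r^{2-4s}$ and $r^{2-2s}$, and the direct term from \eqref{eq.difference.estimate} contributes $r^{-4s}$. These are not all $\lesssim r^{\varepsilon}$.

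\emph{Tail term.} The Caccioppoli route gives $r^{2-2s}\int\mathrm{Tail}^2$, and the direct term gives $r^{2}\cdot r^{-2}\cdot r^{n+2}/r^{n+2}$-type factors. Both must be bounded by $r^{\varepsilon}\cdot r^{2-\varepsilon}=r^2$.

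\emph{$f$-terms.} Compare $r^{2-2s}$ and $r^2$ against $r^{\varepsilon}\cdot r^{2-2s-\varepsilon}=r^{2-2s}$; this works, using $\fiint_{Q_{r/4}}\le C\fiint_{Q_r}$.

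\textbf{Main obstacle.} The delicate step is the oscillation term. Its naive exponent $r^{-2s}$ (from $r^{2-2s}\cdot r^{-2}$) is worse than $r^{\varepsilon}$. To fix this, I would not divide the first-inequality side by $r^2$ blindly. Instead, I would keep the $Cr^2$ factor from Poincaré: the target's middle expression is $Cr^2\fiint|\nabla z|^2$, so every bound on $\fiint|\nabla z|^2$ gets multiplied by an extra $r^2$. The oscillation coefficient then becomes $r^{2-2s}\cdot r^{-2}\cdot r^2=r^{2-2s}\le r^{\varepsilon}$. Similarly $r^{2-4s}\cdot r^{-2}\cdot r^2\cdot r^{n+2}/r^{n+2}$ should be rechecked carefully against the normalisation of \eqref{eq.difference.3}; if a bad power such as $r^{-4s}$ survives there, I would go back to the estimate in \cite{D2024} and re-derive that piece with a bound $r^{2-2s}$, relying on the gain from $\varepsilon<1-s$. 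The tail term then carries $r^{2-2s}\cdot r^2$-type factors, which are $\le r^{\varepsilon}r^{2-\varepsilon}$ by the same comparison. The $f$-term yields $r^{4-2s}\le r^{\varepsilon}r^{2-2s-\varepsilon}$. All remaining bookkeeping uses only $r\le1$.
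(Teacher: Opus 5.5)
Your approach is the paper's: substitute \eqref{eq.caccio.final} for the Gagliardo term in \eqref{eq.difference.estimate}, divide by $r^{n+2}$ to pass to averages, and keep the Poincaré factor $r^2$ that the middle expression of the statement already contains. The paper's one-line proof cites \eqref{eq.caccio.ineq}, but in effect it uses \eqref{eq.caccio.final}, as you do.

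The step you leave open is not a real obstacle. It comes from a bookkeeping slip. The factor $r^{-2}$ comes only from the coefficient $C/r^2$ in \eqref{eq.caccio.final}, so it multiplies only the Caccioppoli contributions. It does not multiply the terms taken directly from \eqref{eq.difference.estimate}. Averaging $Cr^{2-4s}\int_{t_0-r^2}^{t_0}\int_{B_r}|u-(u)_{Q_r}|^2$ just gives $Cr^{2-4s}\fiint_{Q_r}|u-(u)_{Q_r}|^2$, and the Poincaré factor turns this into $r^{4-4s}$. No power $r^{-4s}$ ever appears, so nothing needs to be re-derived from \cite{D2024}. The same stray $r^{-2}$ also enters your count for the direct tail term; that term actually carries $r^2$.

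Collecting everything, and using $\fiint_{Q_{r/4}}\le 4^{n+2}\fiint_{Q_r}$ for the $f$-term, one gets
\begin{align*}
Cr^2\fiint_{Q_{r/4}}|\nabla(u-w)|^2\,dx\,dt \le\;& C\big(r^{2-2s}+r^{4-4s}+r^{4-2s}\big)\fiint_{Q_r}|u-(u)_{Q_r}|^2\,dx\,dt\\
&+C\big(r^{2}+r^{4-2s}\big)\int_{t_0-r^2}^{t_0}\Big(\int_{\mathbb{R}^n\setminus B_r}\frac{|u(y,t)-(u)_{Q_r}|}{|y-x_0|^{n+2s}}\,dy\Big)^2dt\\
&+C\big(r^{4-2s}+r^{4}\big)\fiint_{Q_r}|f(u)|^2\,dx\,dt .
\end{align*}
Since $r\le 1$ and $\varepsilon<1-s<2-2s$, the three coefficients are bounded as follows:
\begin{align*}
r^{2-2s}+r^{4-4s}+r^{4-2s}&\le 3r^{2-2s}\le 3r^{\varepsilon},\\
r^{2}+r^{4-2s}&\le 2r^2=2r^{\varepsilon}r^{2-\varepsilon},\\
r^{4-2s}+r^{4}&\le 2r^{2-2s}=2r^{\varepsilon}r^{2-2s-\varepsilon}.
\end{align*}
This is the claim, with room to spare in the oscillation term.
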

    
    \begin{proof}
    	Combine \eqref{eq.caccio.ineq} with \eqref{eq.difference.estimate} and divide by $r^{n+2}$ to get average integrals. \qed
    \end{proof}
    
    \subsection{Hölder continuity of the solutions}
    
    We will now prove the Hölder continuity of solutions to \eqref{1.1}. We will do so via a Campanato argument, using the estimates we have developed to show the appropriate decays on the desired quantities. First of all, we will need the following lemma about the decay of the nonlocal integral in \eqref{eq.difference.estimate}, proved in \cite{D2024}.
    
    \begin{lemma} \label{lemma.nonlocaldecay}
    Let $0<d\leq r<1$, $g\in L^2_{loc}((0,T),H^1_{loc}(\Omega)) \cap L^\infty_{loc}((0,T), L^1_{2s}(\mathbb{R}^n))$ and $k\in\mathbb{R}$. Then if $\theta\in(0,1)$ is such that $\theta r\leq d \leq r$ and $0<\varepsilon<1-s$, then
    \begin{equation} \label{eq.obvious}
    	\fiint_{Q_r} |g(x,t)-(g)_{Q_r}|^2 dx dt \leq 4 \fiint_{Q_r} |g(x,t)-k|^2 dx dt,
    \end{equation}
    \begin{equation} \label{eq.normalinequality}
    	\fiint_{Q_d} |g(x,t)-(g)_{Q_d}|^2 dx dt \leq 4\theta^{-n-2} \fiint _{Q_r} |g(x,t)-(g)_{Q_r}|^2 dx dt,
    \end{equation}
    and
    \begin{align} \label{eq.nonlocaldecay}
    	\nonumber & \left( d^{2-\varepsilon} \int_{t_0-d^2}^{t_0} \left( \int_{\mathbb{R}^n\setminus B_d} \frac{|g(y,t)-(g)_{Q_d}|}{|x_0-y|^{n+2s}}dy\right)^2 dt \right)^{1/2} \\
    	\leq& \; c \left(\frac{d}{r}\right)^{1-\varepsilon/2} \left( r^{2-\varepsilon} \int_{t_0-r^2}^{t_0} \left( \int_{\mathbb{R}^n\setminus B_r} \frac{|g(y,t)-(g)_{Q_r}|}{|x_0-y|^{n+2s}}dy\right)^2 dt \right)^{1/2} \\
    	\nonumber&+ c d^{\frac{2-\varepsilon}{2-s}} \left( \int_d^r \left(\frac{d}{\nu}\right)^s \left( \fiint_{Q_\nu} |g(x,t)-(g)_{Q_\nu}|^2 dx dt \right)^{1/2} \frac{d\nu}{\nu} \right. \\
    	\nonumber &+ \left. \left(\frac{d}{r}\right)^s \left( \fiint_{Q_r} |g(x,t)-(g)_{Q_r}|^2 dx dt \right)^{1/2} \right),
    \end{align}
    where $c=c(n,s)$.
    \end{lemma}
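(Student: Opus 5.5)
The three inequalities are of increasing difficulty, and I would prove them in order, the first two being elementary.

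For \eqref{eq.obvious} I would write $g-(g)_{Q_r}=(g-k)+(k-(g)_{Q_r})$ and use $(a+b)^2\le 2a^2+2b^2$. By Jensen, $|k-(g)_{Q_r}|^2=|(g-k)_{Q_r}|^2\le \fiint_{Q_r}|g-k|^2$, which gives the constant $4$. (In fact $k=(g)_{Q_r}$ is the $L^2$ minimiser, so even the constant $1$ holds.)

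For \eqref{eq.normalinequality} I would apply \eqref{eq.obvious} on $Q_d$ with $k=(g)_{Q_r}$. Then I would enlarge the domain of integration from $Q_d$ to $Q_r$, using $Q_d\subset Q_r$ (same top time $t_0$, $d\le r$) and $|Q_r|/|Q_d|=(r/d)^{n+2}\le\theta^{-n-2}$.

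For \eqref{eq.nonlocaldecay} I would follow the dyadic tail argument of \cite{D2024}, in the spirit of Kuusi--Mingione--Sire, in three steps.

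\textbf{Splitting.} I would split $\mathbb{R}^n\setminus B_d=(\mathbb{R}^n\setminus B_r)\cup(B_r\setminus B_d)$. On the outer region I write $|g-(g)_{Q_d}|\le|g-(g)_{Q_r}|+|(g)_{Q_r}-(g)_{Q_d}|$.
\begin{itemize}
\item The first piece, integrated only over $t\in(t_0-d^2,t_0)\subset(t_0-r^2,t_0)$, is exactly the tail at scale $r$. The prefactor $d^{2-\varepsilon}=(d/r)^{2-\varepsilon}r^{2-\varepsilon}$ produces, after taking square roots, the factor $(d/r)^{1-\varepsilon/2}$ in the first term.
\item The constant piece contributes $C\,d^{2-\varepsilon}\,d^2\,r^{-4s}\,|(g)_{Q_r}-(g)_{Q_d}|^2$, since $\int_{\mathbb{R}^n\setminus B_r}|x_0-y|^{-n-2s}\,dy=Cr^{-2s}$.
\end{itemize}

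\textbf{Telescoping the mean difference.} I would choose $\rho_j=2^{j}d$ up to the largest $j$ with $\rho_j\le r$, and telescope. Each consecutive difference satisfies $|(g)_{Q_{\rho_j}}-(g)_{Q_{\rho_{j+1}}}|\le C\big(\fiint_{Q_{\rho_{j+1}}}|g-(g)_{Q_{\rho_{j+1}}}|^2\big)^{1/2}$, by Jensen and $|Q_{\rho_{j+1}}|/|Q_{\rho_j}|=2^{n+2}$. The last step from $\rho_J$ to $r$ is handled the same way and produces the term with $(d/r)^s$. Replacing the sum by an integral over $\nu\in(\rho_j,\rho_{j+1})$, again using comparability of the averages via \eqref{eq.normalinequality} with $\theta=1/2$, gives $\int_d^r(\cdots)^{1/2}\,d\nu/\nu$.

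\textbf{Inner annuli.} On $B_r\setminus B_d$ I would decompose into the annuli $B_{\rho_{j+1}}\setminus B_{\rho_j}$, where $|x_0-y|^{-n-2s}\le\rho_j^{-n-2s}$. I bound $|g-(g)_{Q_d}|\le|g-(g)_{Q_{\rho_{j+1}}}|+|(g)_{Q_{\rho_{j+1}}}-(g)_{Q_d}|$, the second term being treated by the same telescoping. To control the $L^2$-in-time norm of the spatial $L^1$ integral, I would use Cauchy--Schwarz in space:
\begin{equation*}
\int_{t_0-d^2}^{t_0}\Big(\int_{B_\rho}|g-c|\,dy\Big)^2dt\le|B_\rho|\iint_{Q_\rho}|g-c|^2 ,
\end{equation*}
which is valid since $(t_0-d^2,t_0)\subset(t_0-\rho^2,t_0)$. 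Then Minkowski's inequality in $L^2(dt)$ over the sum in $j$ yields
\begin{equation*}
\sum_j \rho_j^{-2s}\,d\,\rho_j\Big(\fiint_{Q_{\rho_j}}|g-(g)_{Q_{\rho_j}}|^2\Big)^{1/2}.
\end{equation*}
Multiplying by $d^{1-\varepsilon/2}$ gives weights of the form $d^{2-\varepsilon/2}\rho^{1-2s}\le d^{\beta}(d/\rho)^s$ for some admissible positive power $\beta$. For this I would use $\rho\le r<1$, $d\le 1$ and $\varepsilon<1-s$ to trade powers of $\rho$ and $d$, reaching the stated power $d^{(2-\varepsilon)/(2-s)}$ or any larger one, which is a weaker estimate since $d<1$. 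Converting the double sums (tail sums inside telescopes) into the single Dini-type integral uses $\sum_{j\ge i}2^{-js}\le C(s)$.

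The main obstacle is this bookkeeping of exponents. Each dyadic piece must carry the factor $(d/\nu)^s$ and a common power of $d$ that is at least the claimed $d^{(2-\varepsilon)/(2-s)}$, using only $r<1$ and $\varepsilon<1-s$, and the double sum from the telescoping must be collapsed without losing a logarithm. I would check the exponent arithmetic first on the constant piece $d^{2-\varepsilon/2}r^{-2s}|(g)_{Q_r}-(g)_{Q_d}|$, since it is the worst term, and then reuse the same trade-off for the annuli.
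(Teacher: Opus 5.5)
Your proofs of \eqref{eq.obvious} and \eqref{eq.normalinequality} are correct. For \eqref{eq.nonlocaldecay}, your decomposition (split at $B_r$, dyadic annuli, telescoping of means, Cauchy--Schwarz in space, Minkowski in $t$) is the natural one; the paper gives no proof of its own and only quotes the lemma.

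The genuine gap is the factor $d$ in your annular sum $\sum_j\rho_j^{-2s}\,d\,\rho_j(\cdots)^{1/2}$. Write $\omega(\nu)=\big(\fiint_{Q_\nu}|g-(g)_{Q_\nu}|^2\big)^{1/2}$. The Cauchy--Schwarz step you wrote gives
\[
\Big(\int_{t_0-d^2}^{t_0}\Big(\int_{B_{\rho_{j+1}}\setminus B_{\rho_j}}\frac{|g-(g)_{Q_{\rho_{j+1}}}|}{|x_0-y|^{n+2s}}\,dy\Big)^2dt\Big)^{1/2}\le \rho_j^{-n-2s}\,|B_{\rho_{j+1}}|^{1/2}|Q_{\rho_{j+1}}|^{1/2}\,\omega(\rho_{j+1})\le C\rho_j^{1-2s}\,\omega(\rho_{j+1}),
\]
and no factor $d$ can be extracted. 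The short window $(t_0-d^2,t_0)$ gains nothing against an average over a time interval of length $\rho^2$, because $g$ may concentrate in time. A true factor $d$ appears only in the mean-shift pieces, whose integrands are constant in $t$; so the piece you call the worst is in fact harmless. The correct weights are $d^{1-\varepsilon/2}\rho^{1-2s}\le d^{\frac{2-\varepsilon}{2}-s}(d/\rho)^s$. Since $\frac{2-\varepsilon}{2}-s<\frac{2-\varepsilon}{2-s}$, no trading of powers using $\rho,r<1$ reaches the stated exponent. (Minor: to turn $\omega(\rho_{j+1})$ into the integral, integrate over $\nu\in(\rho_{j+1},2\rho_{j+1})$, since \eqref{eq.normalinequality} only controls smaller cylinders by larger ones.)

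In fact no argument can reach it: with $d^{\frac{2-\varepsilon}{2-s}}$ the inequality is false. The constant $c=c(n,s)$ is claimed independent of $\theta$, so $d/r\to0$ is allowed. Take:
\begin{itemize}
\item $r=1/2$;
\item $k\in C_c^\infty(B_{0.9r}\setminus\overline{B_{0.6r}})$ with $\int k=0$ and $k\not\equiv0$;
\item $g(y,t)=\phi(t)k(y)$, where $0\le\phi\le1$, $\phi=1$ on $[t_0-d^2,t_0]$ and $\phi=0$ on $(-\infty,t_0-2d^2]$.
\end{itemize}
For $d<0.6r$ one has $(g)_{Q_d}=(g)_{Q_r}=0$, and the tail term at scale $r$ vanishes. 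Moreover $\omega(\nu)=0$ for $\nu\le0.6r$ and $\omega(\nu)=O(d)$ for $\nu\in(0.6r,r]$. Hence the right-hand side is $O\big(d^{\frac{2-\varepsilon}{2-s}+1+s}\big)$, whereas the left-hand side equals $d^{2-\varepsilon/2}\int|k(y)|\,|x_0-y|^{-n-2s}dy$. Since $\frac{2-\varepsilon}{2-s}+1+s>2-\frac{\varepsilon}{2}$, the inequality fails for small $d$. The same example shows that the exponent $\frac{2-\varepsilon}{2}-s$ is sharp.

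So the exponent in the statement is presumably a misprint for $d^{\frac{2-\varepsilon}{2}-s}$. That version is exactly what your argument proves once the bookkeeping is corrected. It is also all that \eqref{eq.holder.2} needs: $(\tau r)^{2-\varepsilon-2s}\le\tau^{2-\varepsilon-2s}r^{\varepsilon}$ for $r<1$ precisely when $\varepsilon\le1-s$, which is where that hypothesis is used.
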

    
    \begin{proof}[Proof of Theorem \ref{teo.holder}]
    	First, choose $\tilde{T} \in (0,T)$. We will prove Hölder continuity in $\Omega \times (0,\tilde{T})$, so fix $(x_0,t_0) \in \Omega \times (0,\tilde{T})$. Let $r<1$ and $\tau \leq 1/8$ such that $\overline{B}_r(x_0) \subset \Omega$ and $[t_0-r^2,t_0] \subset (0,\tilde{T})$. Then, using \eqref{eq.normalinequality}, \eqref{eq.obvious} and Lemma \ref{lemma.heatproperties} we know that
    	\begin{align*}
    		&\fiint_{Q_{\tau r}} |u(x,t)-(u)_{Q_{\tau r}}|^2 dx dt \\
    		\leq& \; c \tau^2 \fiint_{Q_{r/8}} |w-(w)_{Q_{r/8}}|^2 dx dt + c \tau^{-n-2} \fiint_{Q_{r/4}} |u-w|^2 dx dt \\
    		\leq& \; c (\tau r)^2 \fiint_{Q_{r/4}} |\nabla w|^2 dx dt + c \tau^{-n-2} \fiint_{Q_{r/4}} |u-w|^2 dx dt \\
    		\leq& \; c (\tau r)^2 \left(\fiint_{Q_{r/4}} |\nabla (u-w)|^2 dx dt + \fiint_{Q_{r/4}} |\nabla u|^2 dx dt \right) + c \tau^{-n-2} \fiint_{Q_{r/4}} |u-w|^2 dx dt.
    	\end{align*}
    	Then, using \eqref{eq.caccio.ineq} and \eqref{eq.merged.estimate}, we get
    	\begin{equation}
    		\label{eq.holder.1}
    		\begin{aligned} 
    			 &\fiint_{Q_{\tau r}} |u(x,t)-(u)_{Q_{\tau r}}|^2 dx dt \\
    			\leq& \; c(\tau^2 + r^\varepsilon \tau^{-n-2}) \left( r^{2-\varepsilon} \int_{t_0-r^2}^{t_0} \left( \int_{\mathbb{R}^n\setminus B_r} \frac{|u(y,t)-(u)_{Q_r}|}{|x_0-y|^{n+2s}} dy\right)^2 dt \right. \\
    			 &+ r^2 \fiint_{Q_r} |f(u(x,t))|^2 dx dt + \left. \fiint_{Q_r} |u(x,t)-(u)_{Q_r}|^2 dx dt   \right),
    		\end{aligned}
    	\end{equation}
    	where $c=c(n,s)$. Now, we notice that thanks to \eqref{eq.nonlocaldecay}, the nonlocal integral term decays like
    	\begin{align} \label{eq.holder.2}
    	\nonumber &(\tau r)^{2-\varepsilon} \int_{t_0-(\tau r)^2}^{t_0} \left( \int_{\mathbb{R}^n\setminus B_{\tau r}} \frac{|u(y,t)-(u)_{Q_{\tau r}}|}{|x_0-y|^{n+2s}} dy\right)^2 dt \\
    	\leq& \; c(\tau^{2-\varepsilon} + r^\varepsilon \tau^{-n-2-2s}) \left(r^{2-\varepsilon} \int_{t_0-r^2}^{t_0} \left( \int_{\mathbb{R}^n\setminus B_r} \frac{|u(y,t)-(u)_{Q_r}|}{|x_0-y|^{n+2s}} dy\right)^2 dt + \fiint_{Q_r} |u(x,t)-(u)_{Q_r}|^2 dx dt \right).
    	\end{align}
    	Similarly, the following decay is also clear
    	\begin{equation} \label{eq.holder.3}
    		(\tau r)^2 \leq \tau^{2-\varepsilon} r^2 \leq (\tau^{2-\varepsilon} + r^\varepsilon \tau^{-n-2-2s}) r^2. 
    	\end{equation}
    	Finally, we recall that we are working in interior cylinders of our parabolic domain. In particular, we are working in times $t\in(t_0-r^2,t_0)$. Using Proposition \ref{prop.semigroup.regularity}, we know that $u$ is bounded away from $\alpha$ in such cylinders and, therefore
    	\begin{equation} \label{eq.holder.4}
    		\fiint_{Q_r} |f(u(x,t))|^2 dx dt \leq \|f(u)\|^2_{L^\infty(Q_r)} \leq \|f(u)\|^2_{L^\infty(\Omega \times [0,\tilde{T}])}.
    	\end{equation}
    	Notice that this bound is finite because $u \in L^\infty(\Omega \times [0,\tilde{T}]) < \alpha$ for any $\tilde{T} \in (0,T)$ thanks to Proposition \ref{prop.semigroup.regularity} and the fact that $f \in C^{1}_{loc}([0,\alpha))$. Furthermore, this bound does not depend on $\tau$, $r$ or the center of the cylinder $(x_0,t_0)$.
    	
    	Then, adding up the inequalities \eqref{eq.holder.1}, \eqref{eq.holder.2} and \eqref{eq.holder.3} and using \eqref{eq.holder.4}, we get
    	\begin{equation}
    		 \label{eq.holder.5}
    		 \begin{aligned}
    		 	&\left( (\tau r)^{2-\varepsilon} \int_{t_0-(\tau r)^2}^{t_0} \left( \int_{\mathbb{R}^n\setminus B_{\tau r}} \frac{|u(y,t)-(u)_{Q_{\tau r}}|}{|x_0-y|^{n+2s}} dy\right)^2 dt\right)^{1/2} + \left( \fiint_{Q_{\tau r}} |u(x,t)-(u)_{Q_{\tau r}}|^2 dx dt \right)^{1/2} + \tau r \\
    		 	 \leq& \; C_0(\tau^{1-\varepsilon/2} + r^{\varepsilon/2} \tau^{-n/2-1-s}) \left[ \left( r^{2-\varepsilon} \int_{t_0-r^2}^{t_0} \left( \int_{\mathbb{R}^n\setminus B_{r}} \frac{|u(y,t)-(u)_{Q_{r}}|}{|x_0-y|^{n+2s}} dy\right)^2 dt\right)^{1/2} \right.  \\
    		 	&\left.+ \left( \fiint_{Q_{r}} |u(x,t)-(u)_{Q_{r}}|^2 dx dt \right)^{1/2} + r\right],
    		 \end{aligned}
    	\end{equation}
    	where $C_0=C_0(n,s,\|f(u)\|_{L^\infty(\Omega \times [0,\tilde{T}])})$. Notice we properly name this constant to use it on the next step. We assume without loss of generality that $C_0\geq 1$.
    	
    	Fix now some $\alpha_0 \in (0,1)$, and choose some $\alpha$ that satisfies $\alpha_0 < \alpha < 1$. Define $\alpha_1 = (1+\alpha)/2$ and take $\tau_0 = \tau_0(n,s,\alpha,\|f(u)\|_{L^\infty(\Omega \times [0,\tilde{T}])}) \leq 1/8$ such that
    	\begin{equation}\label{eq.tau.conditions}
    		C_0 \tau_0^{(1-\varepsilon/2)(1-\alpha_1)} \leq \frac{1}{2},
    	\end{equation}
    	and, for $\tau_0$ fixed, take some $r_\ast = r_\ast(n,s,\alpha,\|f(u)\|_{L^\infty(\Omega \times [0,\tilde{T}])}) \in (0,1)$ such that
    	\begin{equation} \label{eq.r.conditions}
    		C_0 r^{\varepsilon/2} \tau_0^{-n/2-1-s-(1-\varepsilon/2)\alpha_1} \leq \frac{1}{2},
    	\end{equation}
    	for every $r\leq r_\ast$. Under these conditions, \eqref{eq.holder.5} turns into
    	\begin{align} \label{eq.holder.6}
    		&\nonumber\left( (\tau_0 r)^{2-\varepsilon} \int_{t_0-(\tau_0 r)^2}^{t_0} \left( \int_{\mathbb{R}^n\setminus B_{\tau_0 r}} \frac{|u(y,t)-(u)_{Q_{\tau_0 r}}|}{|x_0-y|^{n+2s}} dy\right)^2 dt\right)^{1/2} + \left( \fiint_{Q_{\tau_0 r}} |u(x,t)-(u)_{Q_{\tau_0 r}}|^2 dx dt \right)^{1/2} + \tau_0 r \\
    		\leq& \; \tau_0^{(1-\varepsilon/2) \alpha_1} \left[ \left( r^{2-\varepsilon} \int_{t_0-r^2}^{t_0} \left( \int_{\mathbb{R}^n\setminus B_{r}} \frac{|u(y,t)-(u)_{Q_{r}}|}{|x_0-y|^{n+2s}} dy\right)^2 dt\right)^{1/2} + \left( \fiint_{Q_{r}} |u(x,t)-(u)_{Q_{r}}|^2 dx dt \right)^{1/2} + r\right],
    	\end{align}
    	for every $r\leq r_\ast$.
    	Now this is a decay estimate for our desired quantity. However, this is only valid for a fixed $\tau_0$. We now want to extend this decay to every $r\leq r_\ast$. First, let us define for convenience the following quantities for every $r\leq r_\ast$:
    \begin{align*}
    	I_1(r) =& \;\left( \int_{t_0-r^2}^{t_0} \left( \int_{\mathbb{R}^n\setminus B_{r}} \frac{|u(y,t)-(u)_{Q_{r}}|}{|x_0-y|^{n+2s}} dy\right)^2 dt\right)^{1/2},\\
    	I_2(r) =& \; \left( \fiint_{Q_{r}} |u(x,t)-(u)_{Q_{r}}|^2 dx dt \right)^{1/2}.
    \end{align*}
    Then \eqref{eq.holder.6} turns into
    \begin{equation} \label{eq.holder.7}
    	(\tau_0 r)^{1-\varepsilon/2} I_1(\tau_0 r) + I_2(\tau_0 r) + \tau_0 r \leq \tau_0^{(1-\varepsilon/2)\alpha_1} ( r^{1-\varepsilon/2} I_1(r) + I_2(r) + r),
    \end{equation}
    for every $r\leq r_\ast$. Let us now define the sharp fractional maximal operator and its truncated version for $0<\delta<1/2$:
    \begin{equation} \label{sharp.maximal.operator}
    	M(x_0,r) := \sup_{\nu \leq r}  \nu^{-(1-\varepsilon/2)\alpha} (\nu^{1-\varepsilon/2} I_1(\nu) + I_2(\nu) + \nu),
    \end{equation}
    \begin{equation} \label{truncated.maximal.operator}
    	M_\delta (x_0,r) := \sup_{\delta r \leq \nu \leq r}  \nu^{-(1-\varepsilon/2)\alpha} (\nu^{1-\varepsilon/2} I_1(\nu) + I_2(\nu) + \nu).
    \end{equation}
    
    We now consider $r\leq \rho \leq r_\ast$, multiply both sides of \eqref{eq.holder.7} by $(\tau_0 r)^{-(1-\varepsilon/2)\alpha}$ and take the supremum with respect to $r\in(\delta \rho,\rho)$ to get:
    \begin{align} \label{eq.holder.8}
    	\nonumber M_\delta (x_0, \tau_0 \rho) &\leq \tau_0^{(1-\varepsilon/2)(\alpha_1-\alpha)} \sup_{\delta \rho \leq \nu \leq \rho}  \nu^{-(1-\varepsilon/2)\alpha} (\nu^{1-\varepsilon/2} I_1(\nu) + I_2(\nu) + \nu) \\
    	&\leq \tau_0^{(1-\varepsilon/2)(1-\alpha)/2} \sup_{\delta \tau_0 \rho \leq \nu \leq \rho}  \nu^{-(1-\varepsilon/2)\alpha} (\nu^{1-\varepsilon/2} I_1(\nu) + I_2(\nu) + \nu) \\
    	\nonumber &\leq \frac{1}{2} \left(M_\delta(x_0,\tau_0 \rho) + \sup_{\tau_0 \rho \leq \nu \leq \rho}  \nu^{-(1-\varepsilon/2)\alpha} (\nu^{1-\varepsilon/2} I_1(\nu) + I_2(\nu) + \nu)\right),
    \end{align}
    where we have used condition \eqref{eq.tau.conditions}. This means that $M_\delta(x_0,\rho)$ is always finite even if $\delta\to 0$, since the second term on the right hand side is independent of $\delta$ and finite. We then assume that $\delta \leq \tau_0$, take into account that $M_\delta(x_0,r)$ is decreasing with respect to $\delta$ and use \eqref{eq.holder.8} to finally deduce that
    
    \begin{align*}
    	M_\delta(x_0,\rho) =& \; \sup_{\delta \rho \leq \nu \leq \rho}  \nu^{-(1-\varepsilon/2)\alpha} (\nu^{1-\varepsilon/2} I_1(\nu) + I_2(\nu) + \nu) \\
    	=& \; \max\left\{\sup_{\delta \rho \leq \nu \leq \tau_0\rho}  (\nu^{-(1-\varepsilon/2)\alpha} (\nu^{1-\varepsilon/2} I_1(\nu) + I_2(\nu) + \nu)),  \sup_{\tau_0 \rho \leq \nu \leq \rho} (\nu^{-(1-\varepsilon/2)\alpha} (\nu^{1-\varepsilon/2} I_1(\nu) + I_2(\nu) + \nu)) \right\} \\
    	=&\; \max\left\{M_{\delta/\tau_0}(x_0,\tau_0\rho), \sup_{\tau_0 \rho \leq \nu \leq \rho} \nu^{-(1-\varepsilon/2)\alpha} (\nu^{1-\varepsilon/2} I_1(\nu) + I_2(\nu) + \nu) \right\} \\
    	\leq&\; \max\left\{M_{\delta}(x_0,\tau_0\rho), \sup_{\tau_0 \rho \leq \nu \leq \rho} \nu^{-(1-\varepsilon/2)\alpha} (\nu^{1-\varepsilon/2} I_1(\nu) + I_2(\nu) + \nu) \right\} \\
    	\leq& \; c \sup_{\tau_0 \rho \leq \nu \leq \rho} \nu^{-(1-\varepsilon/2)\alpha} (\nu^{1-\varepsilon/2} I_1(\nu) + I_2(\nu) + \nu) \\
    	\leq& \; c (\tau_0\rho)^{-(1-\varepsilon/2)\alpha} \sup_{\tau_0 \rho \leq \nu \leq \rho} (\nu^{1-\varepsilon/2} I_1(\nu) + I_2(\nu) + \nu) \\
    	\leq& \; \frac{c}{\rho^{(1-\varepsilon/2)\alpha}} \sup_{\tau_0 \rho \leq \nu \leq \; \rho} (\nu^{1-\varepsilon/2} I_1(\nu) + I_2(\nu) + \nu),
    \end{align*}
    with $c=c(n,s,\alpha, \|f(u)\|_{L^\infty(\Omega\times[0,\tilde{T}])})$. Since nothing on the left hand side depends on $\delta$, we can take $\delta \to 0$, which yields
    \begin{equation*}
    	M(x_0,\rho) \leq \frac{c}{\rho^{(1-\varepsilon/2)\alpha}} \sup_{\tau_0 \rho \leq \nu \leq \rho} (\nu^{1-\varepsilon/2} I_1(\nu) + I_2(\nu) + \nu).
    \end{equation*}
    Now using the fact that $\tau\rho \leq \nu\leq \rho$, and $\tau_0$ is fixed, we can use Lemma \ref{lemma.nonlocaldecay} to get that
    \begin{equation*}
    	M(x_0,\rho) \leq \frac{c}{\rho^{(1-\varepsilon/2)\alpha}} (\rho^{1-\varepsilon/2} I_1(\rho) + I_2(\rho) + \rho).
    \end{equation*}
    Finally, we just use the definition of $M(x_0,r)$, \eqref{sharp.maximal.operator}, to get that, for every $0<r\leq \rho \leq r_\ast$:
    \begin{equation} \label{need.to.prove}
    	r^{1-\varepsilon/2} I_1(r) + I_2(r) + r \leq c \left(\frac{r}{\rho}\right)^{(1-\varepsilon/2)\alpha} (\rho^{1-\varepsilon/2} I_1(\rho) + I_2(\rho) + \rho),
    \end{equation}
     which is exactly the decay we need to close the argument. In order to recover $\alpha_0$, we just choose $0<\varepsilon<\min\{2-2\alpha_0,1-s\}$ and $\alpha=\frac{2\alpha_0}{2-\varepsilon}$. Taking $\rho=r_\ast$ as well, we end up with
    \begin{equation*}
    	r^{1-\varepsilon/2} I_1(r) + I_2(r) + r \leq c \left(\frac{r}{r_\ast}\right)^{\alpha_0} (r_\ast^{1-\varepsilon/2} I_1(r_\ast) + I_2(r_\ast) + r_\ast),
    \end{equation*}
    which implies
    \begin{equation*}
    	\fiint_{Q_r} |u-(u)_{Q_r}|^2 dx dt \leq c\left(\frac{r}{r_\ast}\right)^{2\alpha_0} (r_\ast^{2-\varepsilon} I_1^2(r_\ast) + I_2^2(r_\ast) + r_\ast^2).
    \end{equation*}
    We can then use \eqref{eq.obvious} and the fact that $\varepsilon < 1-s$ to see that
    \begin{equation*}
    	r_\ast^{2-\varepsilon} I_1^2(r_\ast) + I_2^2(r_\ast) \leq \frac{c}{r_\ast^{n+2}}\left( \text{Tail}_\infty(u,x_0,r_\ast,t_0-r_\ast^2,t_0)^2 + \|u\|^2_{L^2([t_0-r_\ast^2,t_0],L^2(B_{r_\ast}))}\right).
    \end{equation*}
    Now notice that, for any solution $u$ of our problem, using the fact that $u\in C^1((0,T),L^2(\Omega))$ and $u\equiv 0$ outside of $\Omega$, we get using Hölder's inequality:
    \begin{align} \label{eq.tailestimate1}
    	\nonumber \text{Tail}_\infty & (u,x_0,r_\ast,t_0-r_\ast^{2},t_0) \\
    	\nonumber \leq& \; r_\ast^{2s} \sup_{t\in(t_0-r^{2},t_0)} \left(  \int_{\mathbb{R}^n\setminus B_{r_\ast}(x_0)}
    	\hspace{-.5cm} |x_0-y|^{-2n-4s}  dy \right)^{1/2} \left(  \int_{\Omega\setminus B_{r_\ast}(x_0)}
    	\hspace{-.5cm} |u(y,t)|^2 dy \right)^{1/2} \\
    	\leq& \; r_\ast^{2s} \sup_{t\in(t_0-r^{2},t_0)} C r_\ast^{-\frac{n}{2}-2s} \|u(\cdot,t)\|_{L^2(\Omega)} \\
    	\nonumber \leq& \; C r_\ast^{-\frac{n}{2}} \|u\|_{L^\infty([0,\tilde{T}], L^2(\Omega))},
    \end{align}
    where $C=C(n,s)$. Therefore
	\begin{equation*}
    	r_\ast^{2-\varepsilon} I_1^2(r_\ast) + I_2^2(r_\ast) \leq \frac{c}{r_\ast^{n+2}}\left( r_\ast^{-n} \|u\|^2_{L^\infty([0,\tilde{T}], L^2(\Omega))} + \|u\|^2_{L^2(\Omega \times[0,\tilde{T})}\right),
    \end{equation*}
    which is a finite quantity that does ot depend on $\tau$, $r$, or the center of the cylinder $(x_0,t_0)$. Then, we conclude
    \begin{equation} \label{eq.holder.final}
    	\fiint_{Q_r} |u-(u)_{Q_r}|^2 dx dt \leq  c\left(\frac{r}{r_\ast}\right)^{2\alpha_0},
    \end{equation}
    where $c=c(n,s,\alpha_0,\|f(u)\|_{L^\infty(\Omega \times [0,\tilde{T}])}, \|u\|_{L^2(\Omega \times[0,\tilde{T})}, \|u\|_{L^\infty([0,\tilde{T}], L^2(\Omega))})$. We then conclude by the integral characterization of Campanato spaces that $u \in C^{\alpha_0}_{loc} (\Omega \times (0,\tilde{T}))$. Since this is valid for every $\tilde{T} \in (0,T)$, then $u \in C^{\alpha_0}_{loc} (\Omega \times (0,T))$ \qed
    \end{proof}

    \begin{remark}
    	Notice that, since the classical solution of \eqref{1.1} is the same as that of \eqref{1.1.truncated}, then Lemma \ref{initial.data.comparison}, Lemma \ref{lemma.comparison.abstract} and Corollary \ref{cor.positivity} are also valid for the solutions of \eqref{1.1}.
    \end{remark}
    
    \section{$L^2-L^\infty$ estimate for the equation with a zeroth order term} \label{sec.l2linfty}
	
	We will now study system \eqref{eq.zeroth.order}. Although system \eqref{eq.zeroth.order} is not exactly of the form \eqref{1.1}, it is easy to adapt the arguments of Lemma \ref{lemma.truncated.solution} by noticing that the Nemytskii operator generated by $a(x,t)u(x,t)$ is globally Lipschitz in $L^2(\Omega)$ and the function $t \rightarrow a(\cdot,t)u(\cdot,t)$ is continuous in $L^2(\Omega)$. Therefore, we know that we have a unique classical solution
    \begin{equation*}
    	v \in C((0,T^\prime), D(\mathcal{L})) \cap C([0,T^\prime),H_0^1(\Omega)) \cap C^1((0,T^\prime),L^2(\Omega)),
    \end{equation*}
    of system \eqref{eq.zeroth.order}.
    
    Our objective is to find an $L^2-L^\infty$ estimate for equation \eqref{eq.zeroth.order}. To do so, we will follow similar arguments to those on the paper \cite{FSZ2022}, where they proved such estimates for the purely linear case. We will pay special attention to the constants that appear in the estimates, and their dependence on $r$ and $\tau$. The main result of this section is the following.

    \subsection{Another Caccioppoli estimate}
    
    The first step is to find a Caccioppoli inequality for system \eqref{eq.zeroth.order}. This turns out as follows.
    
    \begin{lemma} \label{lemma.cacciopoli.zeroth}
    	Let $v$ be the classical solution of equation \eqref{eq.zeroth.order}. Let $B_r\equiv B_r(x_0)$ be such that $\overline{B}_r \subset \Omega$ and $0<t_1<t_2$, $l>0$ satisfy $[t_1-l,t_2] \subset (0,T^\prime)$. Then for any nonnegative functions $\psi \in C_0^\infty(B_r)$ and $\eta\in C^\infty(\mathbb{R})$ such that $\eta \equiv 0$ if $t\leq t_1-l$ and $\eta\equiv 1$ if $t\geq t_1$, it holds that
    	\begin{align} \label{eq.cacciopoli.zeroth}
    		\nonumber\int_{t_1-l}^{t_2} & \int_{B_r} |\nabla w_+(x,t)|^2\psi^2(x)\eta^2(t) dx dt + \sup_{t\in(t_1,t_2)} \int_{B_r} w_+^2(x,t)\psi^2(x) dx \\ 
    		\nonumber &+ \int_{t_1-l}^{t_2} \int_{B_r} \int_{B_r} \frac{|w_+(x,t)\psi(x)-w_+(y,t)\psi(y)|^2 \eta^2(t)}{|x-y|^{n+2s}} dxdydt \\
    		\leq& \,\, C \int_{t_1-l}^{t_2} \int_{B_r} |\nabla \psi(x)|^2 w_+^2(x,t)\eta^2(t) dx dt \\
    		\nonumber &+ C \int_{t_1-l}^{t_2} \int_{B_r} \int_{B_r} \frac{\max\{w_+(x,t),w_+(y,t)\}^2 }{|x-y|^{n+2s}}|\psi(x)-\psi(y)|^2 \eta^2(t) dxdydt \\
    		\nonumber &+ C \sup_{\substack{t\in(t_1-l,t_2)\\ x\in\operatorname{supp}(\psi)}} \int_{\mathbb{R}^n\setminus B_r} \frac{w_+(y,t)}{|x-y|^{n+2s}} dy \int_{t_1-l}^{t_2} \int_{B_r} w_+(x,t)\psi^2(x)\eta^2(t) dxdt \\
    		\nonumber &+C \int_{t_1-l}^{t_2} \int_{B_r} w_+^2(x,t) \psi^2(x) \eta(t) |\partial_t \eta(t)| dx dt \\
    		\nonumber &+  C \int_{t_1-l}^{t_2} \int_{B_r} w_+^2(x,t) \psi^2(x) \eta^2(t) dx dt + C k \int_{t_1-l}^{t_2} \int_{B_r} w_+(x,t) \psi^2(x) \eta^2(t) dx dt
    	\end{align}
    	where $w_+(x,t) = (v(x,t)-k)_+$ for some $k>0$ and $C=C(n,s, \|a\|_{L^\infty(\Omega\times[t_1-l,t_2]})$.
    \end{lemma}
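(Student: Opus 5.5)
We will follow the scheme of Proposition \ref{prop.cacciopoli}, now with the truncated test function $\varphi = w_+\psi^2\eta^2$, where $w_+ = (v-k)_+$. As there, we first perform the time regularization of \cite{DZZ2021} and skip its details. Since $k>0$ and $v\equiv 0$ outside $\Omega$, we have $w_+\equiv 0$ in $\mathbb{R}^n\setminus\Omega$, and $\psi$ is supported in $B_r$. Hence $\varphi$ is admissible, by the regularity $v\in C((0,T'),H_0^1(\Omega))\cap C^1((0,T'),L^2(\Omega))$. Testing \eqref{eq.zeroth.order} and integrating over $(t_1-l,\tau)$ for an arbitrary $\tau\in(t_1,t_2)$ gives four terms: $\mathcal{I}_1$ (time derivative), $\mathcal{I}_2$ (the form $\mathcal{B}(v,\varphi)$), $\mathcal{I}_3$ (the local gradient term) and $\mathcal{I}_4 = \iint a v\, w_+\psi^2\eta^2$.

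\textbf{Time term.} Since $v_t\,w_+ = \tfrac12 (w_+^2)_t$ and $\eta(t_1-l)=0$, we get
\[
\mathcal{I}_1 = \tfrac12\int_{B_r} w_+^2\psi^2\eta^2(x,\tau)\,dx - \int_{t_1-l}^{\tau}\!\!\int_{B_r} w_+^2\psi^2\eta\,\eta_t .
\]
Because $\eta\equiv1$ for $t\ge t_1$, taking the supremum over $\tau\in(t_1,t_2)$ at the end produces the $\sup$ term on the left.

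\textbf{Local term.} We use $\nabla v\cdot\nabla w_+ = |\nabla w_+|^2$, so
\[
\mathcal{I}_3 = \iint |\nabla w_+|^2\psi^2\eta^2 + 2\iint w_+\psi\,\nabla w_+\cdot\nabla\psi\,\eta^2 .
\]
The cross term is absorbed by Young's inequality exactly as in \eqref{eq.caccio.5}.

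\textbf{Zeroth-order term.} On $\{w_+>0\}$ we have $v = w_+ + k$, hence
\[
|\mathcal{I}_4| \le \|a\|_{L^\infty}\iint (w_+^2 + k\,w_+)\psi^2\eta^2 .
\]
This gives the last two terms on the right-hand side, with a constant depending on $\|a\|_{L^\infty(\Omega\times[t_1-l,t_2])}$. This is why $k>0$ enters linearly.

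\textbf{Nonlocal term.} This is the main obstacle. We split $\mathcal{B}(v,w_+\psi^2\eta^2)$ into the part over $B_r\times B_r$ and twice the part over $B_r\times(\mathbb{R}^n\setminus B_r)$; the latter appears because $\varphi$ vanishes when both variables lie outside $B_r$.

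For the diagonal part we use the standard pointwise inequality for truncations,
\[
(v(x)-v(y))\bigl(w_+(x)\psi^2(x)-w_+(y)\psi^2(y)\bigr) \ge c\,|w_+\psi(x)-w_+\psi(y)|^2 - C\max\{w_+(x),w_+(y)\}^2|\psi(x)-\psi(y)|^2 .
\]
To prove it we first reduce to $w_+$, using that $(v(x)-v(y))(w_+(x)-w_+(y))\ge (w_+(x)-w_+(y))^2$. We then check the cases in which $w_+$ vanishes at one point, where the left side has a favorable sign. This is the argument of \cite{DZZ2021,FSZ2022}, and I would cite it, or verify it in a short case analysis.

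For the off-diagonal part, take $x\in\operatorname{supp}\psi$ and $y\notin B_r$. Then
\[
(v(x)-v(y))\,w_+(x) \ge -(v(y)-k)_+\,w_+(x) = -w_+(y)\,w_+(x),
\]
since on $\{w_+(x)>0\}$ we have $v(x)-v(y) = w_+(x) - (v(y)-k) \ge -(v(y)-k)_+$. Bounding $\int_{\mathbb{R}^n\setminus B_r} w_+(y,t)|x-y|^{-n-2s}\,dy$ by its supremum over $t$ and $x\in\operatorname{supp}\psi$ yields exactly the tail term in \eqref{eq.cacciopoli.zeroth}.

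Collecting all the estimates, moving the absorbed gradient term to the left and taking the supremum in $\tau$ concludes the proof. All constants depend only on $n$, $s$ (through $C_{n,s}$) and $\|a\|_{L^\infty}$.
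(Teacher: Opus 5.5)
Your proposal is correct and follows the same route as the paper. You test \eqref{eq.zeroth.order} with $w_+\psi^2\eta^2$, treat the time, local and nonlocal terms as in the homogeneous case, and bound the zeroth-order term using $v\,w_+ = w_+^2 + k\,w_+$ on $\{w_+>0\}$; the paper just cites \cite[Lemma 3.1]{FSZ2022} for the nonlocal part, whose pointwise truncation inequality and tail bound you reproduce correctly (a minor point: to obtain both the full integrals over $(t_1-l,t_2)$ and the supremum term, add the inequality at $\tau=t_2$ to the one obtained by taking the supremum in $\tau$, which only doubles the constant).
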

    \begin{proof}
    	This proof is very similar to that of Proposition \ref{prop.cacciopoli}. Again, the first step of this argument should be the regularization procedure with respect to the time variable, which we will omit again. Take $\varphi(x,t) = w_+(x,t) \psi^2(x) \eta^2(t)$ as a test function in equation \eqref{eq.zeroth.order} to get:
    	\begin{align*}
    		&\int_{t_1-l}^{t_2} \int_{B_r} \partial_t v(x,t) w_+ \psi^2 \eta^2(x,t) dx dt + \int_{t_1-l}^{t_2} \int_{B_r} \nabla v(x,t) \nabla (w_+\psi^2\eta^2(x,t)) dxdt \\
    		&+ \frac{C_{n,s}}{2} \int_{t_1-l}^{t_2} \int_{B_r} \int_{B_r} \frac{(v(x,t)-v(y,t))(w_+ \psi^2\eta^2(x,t) - w_+ \psi^2\eta^2(y,t))}{|x-y|^{n+2s}} dxdydt \\
    		&+ \frac{C_{n,s}}{2} \int_{t_1-l}^{t_2} \int_{\mathbb{R}^n \setminus B_r} \int_{B_r} \frac{(v(x,t)-v(y,t))w_+ \psi^2\eta^2(x,t)}{|x-y|^{n+2s}} dxdydt \\
    		\leq& \; \int_{t_1-l}^{t_2} \int_{B_r} a(x,t)v(x,t) w_+ \psi^2 \eta^2(x,t) dx dt.
    	\end{align*}
    	The left hand side term can be treated in a very similar way as in the homogeneous problem. The only difference is that the nonlocal integral over $\mathbb{R}^n\setminus B_r$ will be estimated by a different quantity that will be more practical for our purposes, see \cite[Lemma 3.1]{FSZ2022} for details. Apart from that, we only need to estimate the new integral that spawns from the zeroth order term. Noticing that $v w_+ = (v-k)(v-k)_+ + k(v-k)_+ = w_+^2 + k w_+$, we end up with
    	\begin{align*}
    		\int_{t_1-l}^{t_2}& \int_{B_r} a(x,t)v(x,t) w_+(x,t) \psi^2(x) \eta^2(t) dx dt \\
    		\leq& \; \|a\|_{L^\infty(\Omega\times[t_1-l,t_2])} \int_{t_1-l}^{t_2} \int_{B_r} v(x,t) w_+(x,t) \psi^2(x) \eta^2(t) dx dt \\
    		=& \; \|a\|_{L^\infty(\Omega\times[t_1-l,t_2])} \int_{t_1-l}^{t_2} \int_{B_r} w_+^2(x,t) \psi^2(x) \eta^2(t) dx dt  \\
    		&+ \|a\|_{L^\infty(\Omega\times[t_1-l,t_2])} k \int_{t_1-l}^{t_2} \int_{B_r} w_+(x,t) \psi^2(x) \eta^2(t) dx dt,
    	\end{align*}
    	which leads to the desired result. \qed
    \end{proof}

    \subsection{Moser iteration for local boundedness}
    
    We are going to introduce some notation that will be useful to enable the Moser iteration we will need to prove the $L^2-L^\infty$ estimate. First, for some $\tau \in (0,1)$ and $r\in(0,1)$, set
    \begin{equation*}
    	r_0 = r, \;\; r_j = \tau r + 2^{-j}(1-\tau) r, \;\; \tilde{r}_j = \frac{r_j+r_{j+1}}{2}, \;\; j=0,1,2\dots,
    \end{equation*}    
    and
    \begin{align*}
    	Q_j &= B_j \times I_j = B_{r_j}(x_0) \times (t_0-r_j^2,t_0), \;\; j=0,1,2\dots, \\
    	\tilde{Q}_j &= \tilde{B}_j \times \tilde{I}_j = B_{\tilde{r}_j}(x_0) \times (t_0-\tilde{r}_j^2,t_0), \;\; j=0,1,2\dots.
    \end{align*}
    Notice that the following relations are satisfied:
    \begin{equation*}
    	r_j > \tilde{r}_j > r_{j+1}, \;\; B_{j+1} \subset \tilde{B}_j \subset B_j, \;\; I_{j+1} \subset \tilde{I}_j \subset I_j, \;\; j=0,1,2\dots
    \end{equation*}
    Now choose
    \begin{equation*}
    	\tilde{k} \geq \frac{1}{2} \text{Tail}_\infty (v_+, x_0, \tau r, t_0-r^{2}, t_0),
    \end{equation*}
    and denote
    \begin{equation*}
		k_j = (1-2^{-j})\tilde{k}, \;\; \tilde{k}_j = \frac{k_{j+1}+k_j}{2}, \;\; j=0,1,2\dots,   	
    \end{equation*}
    which in turn satisfy the relations
    \begin{equation*}
    	\tilde{k} \geq k_{j+1} \geq \tilde{k}_j \geq k_j \geq 0, \;\; j=0,1,2\dots
    \end{equation*}
    Finally, let
    \begin{equation*}
    	w_j = (v-k_j)_+, \;\; \tilde{w}_j = (v-\tilde{k}_j)_+, \;\; j=0,1,2\dots
    \end{equation*}
    Notice that these functions satisfy the following useful inequality for every $0\leq \alpha < \beta$ and every $(x,t) \in \Omega \times [0,T)$:
    \begin{equation} \label{trivial.but.useful}
    	\tilde{w}^\alpha_j(x,t) \leq w^\alpha_j(x,t) \leq \frac{C 2^{(\beta-\alpha)j}}{\tilde{k}^{\beta-\alpha}} w_j^\beta(x,t).
    \end{equation}
    
    Let us also state the following parabolic Sobolev inequality that will be very useful to us later on. For the proof, see \cite{AF2003}.
    \begin{lemma} \label{lemma.Linfty.estimate}
    	For every $u\in L^2((t_1,t_2), H^1(B_r)) \cap L^\infty((t_1,t_2),L^2(B_r))$ with $0<t_1<t_2$, it holds that
    	\begin{align}
    		\nonumber \int_{t_1}^{t_2} & \fint_{B_r} |u(x,t)|^{2(1+2/n)} dx dt \\
    		\leq& \; C \left(r^2 \int_{t_1}^{t_2} \fint_{B_r} |\nabla u(x,t)|^2 dx dt + \int_{t_1}^{t_2} \fint_{B_r} |u(x,t)|^2 dx dt \right) \left( \sup_{t\in(t_1,t_2)} \fint_{B_r} |u(x,t)|^2 dx \right)^{2/n}.
    	\end{align}
    \end{lemma}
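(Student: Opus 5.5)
This is the classical parabolic Sobolev (Gagliardo--Nirenberg) embedding, and I would prove it by combining the spatial Sobolev inequality on $B_r$ with Hölder's inequality in $x$, applied slice by slice in time. By scaling $x\mapsto x_0+rx$ it suffices to treat $r=1$. The normalized averages $\fint_{B_r}$ make both sides homogeneous of the same degree in $r$: the factor $r^2$ in front of the gradient term is exactly what the change of variables produces. The time integral is not rescaled, so the interval $(t_1,t_2)$ plays no role and one works with a fixed $t$ throughout.

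For a fixed $t$, write $p=2(1+2/n)$ and split $|u|^p=|u|^2\cdot|u|^{4/n}$. Take $n\ge3$ first, with $2^*=2n/(n-2)$. Hölder's inequality with exponents $n/(n-2)$ and $n/2$ gives
\begin{equation*}
\fint_{B_1}|u|^{p}\,dx\le\Big(\fint_{B_1}|u|^{2^*}dx\Big)^{2/2^*}\Big(\fint_{B_1}|u|^2dx\Big)^{2/n}.
\end{equation*}
The Sobolev inequality on the ball, which holds for $H^1(B_1)$ functions without a vanishing trace because $B_1$ is an extension domain, then gives
\begin{equation*}
\Big(\fint_{B_1}|u|^{2^*}\Big)^{2/2^*}\le C\Big(\fint_{B_1}|\nabla u|^2+\fint_{B_1}|u|^2\Big).
\end{equation*}
Bound the second factor by $\big(\sup_{t}\fint_{B_1}|u(\cdot,t)|^2\big)^{2/n}$ and integrate in $t$ over $(t_1,t_2)$. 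This is exactly the claimed estimate at $r=1$, and rescaling restores the $r^2$.

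The only real obstacle is the low-dimensional case, since $2^*$ is not available when $n=1,2$. There I would use the Gagliardo--Nirenberg interpolation inequality on $B_1$ instead:
\begin{equation*}
\|u\|_{L^{p}(B_1)}^{p}\le C\,\|u\|_{H^1(B_1)}^{2}\,\|u\|_{L^2(B_1)}^{4/n}.
\end{equation*}
This is valid for every $n\ge1$, because the interpolation exponent $\theta$ defined by $1/p=1/2-\theta/n$ satisfies $\theta p=2$; equivalently $\theta=n/(n+2)$, which always lies in $(0,1)$. For $n=1$ this reduces to the bound $\|u\|_\infty^2\le C\|u\|_{L^2}\|u\|_{H^1}$ combined with $\int|u|^6\le\|u\|_\infty^4\int|u|^2$. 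For $n=2$ it is Ladyzhenskaya's inequality raised to the appropriate power. Again $B_1$ being a Lipschitz extension domain justifies applying the whole-space versions. Measurability in $t$ and the finiteness of both sides follow from $u\in L^2H^1\cap L^\infty L^2$, so the slice-wise argument is legitimate.
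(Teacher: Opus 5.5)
Your argument is correct. The paper gives no proof of its own and simply cites \cite{AF2003}; your argument (reduce to $r=1$ by spatial scaling, then apply H\"older slice by slice together with the Sobolev embedding on $B_1$, or Gagliardo--Nirenberg with $\theta=n/(n+2)$ when $n=1,2$) is the standard derivation that citation stands for. One detail is worth stating explicitly: to get the interpolation form on $B_1$ from the whole-space inequality, the extension operator must be bounded on $L^2$ and on $H^1$ simultaneously. Stein's operator does this, as does reflection across $\partial B_1$, so the $L^2(B_1)$ factor is not degraded to the full $H^1(B_1)$ norm.
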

    
    We will now particularize our Caccioppoli inequality from Lemma \ref{lemma.cacciopoli.zeroth} to some appropriate cutoff functions that will help us get some inequality to perform the recursive iteration.
    
    \begin{lemma} \label{lemma.Cacciopoli.improved.zeroth}
    	Let $v$ be the classical solution of equation \eqref{eq.zeroth.order} and $\tilde{T} \in (0,T^\prime)$. Let $(x_0,t_0) \in \Omega \times (0,\tilde{T})$, $0<r<1$, and $Q_r = B_r(x_0)\times (t_0-r^2,t_0)$ such that $\overline{B}_r(x_0) \subset \Omega$ and $[t_0-r^2,t_0] \subset (0,\tilde{T})$. Then it holds that
    	\begin{align*}
    		\int_{I_{j+1}}& \fint_{B_{j+1}} |\nabla \tilde{w}_j(x,t)|^2 dx dt + \sup_{t\in I_{j+1}} \fint_{B_{j+1}} \tilde{w}_j^2(x,t) dx + \int_{I_{j+1}} \int_{B_{j+1}} \fint_{B_{j+1}} \frac{|\tilde{w}_j(x,t)-\tilde{w}_j(y,t)|^2}{|x-y|^{n+2s}} dxdydt \\
    		\leq& \, \frac{C}{r^2(1-\tau)^2 } \left( 2^{2j+2} + \frac{2^{(n+2s+1)j}}{\tau^2(1-\tau)^{n+2s-2}} \right) \int_{I_j} \fint_{B_j} w_j^2(x,t) dx dt,
    	\end{align*}
    	for every $j\in\mathbb{N}$, where $C=C(n,s,\|a\|_{L^\infty(\Omega\times[0,\tilde{T}])})$.
    \end{lemma}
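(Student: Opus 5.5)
We apply Lemma \ref{lemma.cacciopoli.zeroth} with level $k=\tilde{k}_j$ (so $w_+=\tilde w_j$), on the ball $B_r$ replaced by $B_j$, with $t_1=t_0-r_{j+1}^2$, $l=r_j^2-r_{j+1}^2$, $t_2=t_0$. We take cutoffs $\psi_j\in C_0^\infty(\tilde B_j)$ with $\psi_j\equiv1$ on $B_{j+1}$, $0\le\psi_j\le1$, $|\nabla\psi_j|\le C2^{j}/((1-\tau)r)$ (since $\tilde r_j-r_{j+1}=2^{-j-2}(1-\tau)r$), and $\eta_j$ with $\eta_j\equiv0$ before $t_0-r_j^2$, $\eta_j\equiv1$ on $I_{j+1}$, $|\eta_j'|\le C/(r_j^2-r_{j+1}^2)\le C2^{2j}/((1-\tau)r^2)$. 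The left-hand side of \eqref{eq.cacciopoli.zeroth} then dominates the three quantities in the statement, after restricting the integrals to $B_{j+1}\times I_{j+1}$. Dividing by $|B_{j+1}|\simeq |B_j|$ (comparable up to $2^{n}$ since $r_{j+1}\ge r_j/2$) converts everything to averages over $B_j$.

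Next we bound the right-hand side terms one at a time, always using $\tilde w_j\le w_j$ and \eqref{trivial.but.useful}.

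The gradient-of-cutoff term and the $\eta|\eta_t|$ term give $C2^{2j}(1-\tau)^{-2}r^{-2}\int_{I_j}\fint_{B_j}w_j^2$, using $(1-\tau)^{-1}\le(1-\tau)^{-2}$.

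For the first nonlocal term, $|\psi(x)-\psi(y)|^2\le |\nabla\psi|_\infty^2|x-y|^2$ and $\int_{B_j}|x-y|^{2-n-2s}dy\le Cr^{2-2s}$. This yields $C2^{2j}(1-\tau)^{-2}r^{-2s}\int\fint w_j^2$, which is absorbed since $r<1$ implies $r^{-2s}\le r^{-2}$.

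The zeroth-order terms are $\int\fint\tilde w_j^2$ (bounded by $r^{-2}$ times the same, using $r<1$) and $\tilde k_j\int\fint\tilde w_j$. For the latter we use $\tilde k_j\le\tilde k$ and \eqref{trivial.but.useful} with $\alpha=1$, $\beta=2$: on $\{\tilde w_j>0\}$ we have $w_j\ge\tilde k_j-k_j=2^{-j-2}\tilde k$, so $\tilde k\,\tilde w_j\le C2^{j}w_j^2$.

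The main obstacle is the tail term
\[
\sup_{t\in I_j,\ x\in\operatorname{supp}\psi_j}\int_{\mathbb{R}^n\setminus B_j}\frac{\tilde w_j(y,t)}{|x-y|^{n+2s}}dy .
\]
For $x\in\tilde B_j$ and $y\notin B_j$ we have $|x-y|\ge \frac{r_j-\tilde r_j}{r_j}|y-x_0|\ge c2^{-j}(1-\tau)|y-x_0|$, giving a factor $C2^{(n+2s)j}(1-\tau)^{-n-2s}$. Then $\tilde w_j\le v_+$, and since $B_j\supset B_{\tau r}$, the integral is at most $(\tau r)^{-2s}\operatorname{Tail}_\infty(v_+,x_0,\tau r,t_0-r^2,t_0)\le 2(\tau r)^{-2s}\tilde k$ by the choice of $\tilde k$. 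Multiplying by $\int\fint\tilde w_j$ and using $\tilde k\tilde w_j\le C2^{j}w_j^2$ again produces
\[
\frac{C2^{(n+2s+1)j}}{\tau^{2s}r^{2s}(1-\tau)^{n+2s}}\int_{I_j}\fint_{B_j}w_j^2 .
\]
Since $\tau,r<1$, this is bounded by $\frac{C}{r^2(1-\tau)^2}\cdot\frac{2^{(n+2s+1)j}}{\tau^2(1-\tau)^{n+2s-2}}\int\fint w_j^2$, which is exactly the second summand of the statement. Collecting all the estimates gives the claim; the only delicate bookkeeping is tracking the powers of $2^j$, $\tau$ and $1-\tau$ in this tail step.
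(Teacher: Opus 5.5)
Your proposal is correct and follows the paper's proof. You apply Lemma~\ref{lemma.cacciopoli.zeroth} at level $\tilde k_j$ on $B_j$, with cutoffs adapted to $\tilde B_j\times I_{j+1}$, and your term-by-term bounds coincide with the paper's estimates for $\mathcal{I}_1$--$\mathcal{I}_6$. This includes the tail term via $|x-y|\ge c\,2^{-j}(1-\tau)|y-x_0|$ and $\tilde k\ge\frac12\mathrm{Tail}_\infty(v_+,x_0,\tau r,t_0-r^2,t_0)$, which the paper simply imports from \cite{FSZ2022}. Your two small deviations are harmless and slightly cleaner: taking $l=r_j^2-r_{j+1}^2$ even gives a better $(1-\tau)^{-1}$ bound on $\eta_j'$, and normalizing by $|B_{j+1}|$ with $|B_j|\le 2^n|B_{j+1}|$ avoids the paper's ``$|B_j|\sim r^n$'', which is not uniform in $\tau$.
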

    
    \begin{proof}
    	We need to choose appropriate cutoff functions to make \eqref{eq.cacciopoli.zeroth} fit our purposes. Take $\psi_j \in C_0^\infty (\tilde{B}_j)$ and $\eta_j \in C_0^\infty(\tilde{I}_j)$ such that
    	\begin{equation*}
    		 0 \leq \psi_j \leq 1, \;\; |\nabla \psi_j| \leq \frac{C 2^j}{(1-\tau)r}, \; \text{ in } \tilde{B}_j, \;\;\; \psi \equiv 1 \text{ in } B_{j+1},
    	\end{equation*}
    	and
    	\begin{equation*}
    		 0 \leq \eta_j \leq 1, \;\; |\partial_t \eta_j| \leq \frac{C 2^{2j}}{(1-\tau)^2 r^2}, \; \text{ in } \tilde{I}_j, \;\;\; \eta \equiv 1 \text{ in } I_{j+1}.
    	\end{equation*}
    	Now take $B_r = B_j$, $t_2=t_0$, $t_1=t_0-r_{j+1}^2$, $l=\tilde{r}_j^2 -r_{j+1}^2$ and $k=\tilde{k}_j$ in Lemma \ref{lemma.cacciopoli.zeroth}. Then we get
    	\begin{align*} 
    		\int_{\tilde{I}_j} & \int_{B_j} |\nabla \tilde{w}_j(x,t)|^2\psi_j^2(x)\eta_j^2(t) dx dt + \sup_{t\in I_{j+1}} \int_{B_j} \tilde{w}_j^2(x,t)\psi_j^2(x) dx \\ 
    		&+ \int_{\tilde{I}_j} \int_{B_j} \int_{B_j} \frac{|\tilde{w}_j(x,t)\psi_j(x)-\tilde{w}_j(y,t)\psi_j(y)|^2 \eta_j^2(t)}{|x-y|^{n+2s}} dxdydt \\
    		\leq& \,\, C \int_{\tilde{I}_j} \int_{B_j} |\nabla \psi_j(x)|^2 \tilde{w}_j^2(x,t)\eta_j^2(t) dx dt \\
    		&+ C \int_{\tilde{I}_j} \int_{B_j} \int_{B_j} \frac{\max\{\tilde{w}_j(x,t),\tilde{w}_j(y,t)\}^2 |\psi_j(x)-\psi_j(y)|^2 \eta_j^2(t)}{|x-y|^{n+2s}} dxdydt \\
    		&+ C \sup_{t\in\tilde{I}_j; x\in supp(\psi_j)} \int_{\mathbb{R}^n\setminus B_j} \frac{\tilde{w}_j(y,t)}{|x-y|^{n+2s}} dy \int_{\tilde{I}_j} \int_{B_j} \tilde{w}_j(x,t)\psi_j^2(x)\eta_j^2(t) dxdt \\
    		&+C \int_{\tilde{I}_j} \int_{B_j} \tilde{w}_j^2(x,t) \psi_j^2(x) \eta_j(t) |\partial_t \eta_j(t)| dx dt \\
    		&+  C \int_{\tilde{I}_j} \int_{B_j} \tilde{w}_j^2(x,t) \psi_j^2(x) \eta_j^2(t) dx dt + C \tilde{k}_j \int_{\tilde{I}_j} \int_{B_j} \tilde{w}_j(x,t) \psi_j^2(x) \eta_j^2(t) dx dt \\
    		=& \; \mathcal{I}_1 + \mathcal{I}_2 + \mathcal{I}_3 + \mathcal{I}_4 + \mathcal{I}_5 + \mathcal{I}_6.
    	\end{align*}
    	Now let us estimate each integral using the properties of $\psi_j$, $\eta_j$ and \eqref{trivial.but.useful}. The estimates of $\mathcal{I}_1$ to $\mathcal{I}_4$ are the same as the ones in \cite[Lemma 4.1]{FSZ2022}:
    	\begin{equation} \label{eq.I1}
    		\mathcal{I}_1 \leq \frac{C 2^{2j}}{(1-\tau)^2 r^2} \int_{I_j} \int_{B_j} \tilde{w}_j^2(x,t) dx dt \leq \frac{C 2^{2j}}{(1-\tau)^2 r^2} \int_{I_j} \int_{B_j} w_j^2(x,t) dx dt,
    	\end{equation}
    	\begin{equation} \label{eq.I2}
    		\mathcal{I}_2 \leq \frac{C 2^{2j}}{(1-\tau)^2 r^{2s}} \int_{I_j} \int_{B_j} w_j^2(x,t) dx dt,
    	\end{equation}
    	\begin{equation} \label{eq.I3}
    		\mathcal{I}_3 \leq \frac{C 2^{(n+2s+1)j}}{(1-\tau)^{n+2s}(\tau r)^{2s}} \int_{I_j} \int_{B_j} w_j^2(x,t) dx dt,
    	\end{equation}
    	and
    	\begin{equation} \label{eq.I4}
    		\mathcal{I}_4 \leq \frac{C 2^{2j}}{(1-\tau)^2 r^2} \int_{I_j} \int_{B_j} w_j^2(x,t) dx dt.
    	\end{equation}
    	The two remaining integrals, which are the new ones particular to our problem, are also easy to deal with
    	\begin{equation} \label{eq.I5}
    		\mathcal{I}_5 \leq C \int_{I_j} \int_{B_j} \tilde{w}_j^2(x,t) dx dt \leq C\int_{I_j} \int_{B_j} w_j^2(x,t) dx dt,
    	\end{equation}
    	and
    	\begin{equation} \label{eq.I6}
    		\mathcal{I}_6 \leq C \tilde{k}_j \int_{I_j} \int_{B_j} \tilde{w}_j(x,t) dx dt \leq C 2^j \int_{I_j} \int_{B_j} w_j^2(x,t) dx dt.
    	\end{equation}
    	
    	On the other hand, the left hand side can easily be dealt with by restricting the integral domains to $I_{j+1}$ and $B_{j+1}$, where both cutoffs are equal to $1$.
    	\begin{align} \label{eq.righthandside.zeroth}
    		\nonumber \int_{\tilde{I}_j} & \int_{B_j} |\nabla \tilde{w}_j(x,t)|^2\psi_j^2(x)\eta_j^2(t) dx dt + \sup_{t\in I_{j+1}} \int_{B_j} \tilde{w}_j^2(x,t)\psi_j^2(x) dx \\ 
    		&+ \int_{\tilde{I}_j} \int_{B_j} \int_{B_j} \frac{|\tilde{w}_j(x,t)\psi_j(x)-\tilde{w}_j(y,t)\psi_j(y)|^2 \eta_j^2(t)}{|x-y|^{n+2s}} dxdydt \\
    		\nonumber \geq& \; \int_{I_{j+1}} \int_{B_{j+1}} |\nabla \tilde{w}_j(x,t)|^2 dx dt + \sup_{t\in I_{j+1}} \int_{B_{j+1}} \tilde{w}_j^2(x,t) dx + \int_{I_{j+1}} \int_{B_{j+1}} \int_{B_{j+1}} \frac{|\tilde{w}_j(x,t)-\tilde{w}_j(y,t)|^2}{|x-y|^{n+2s}} dxdydt.
    	\end{align}
    	
    	Now, merging \eqref{eq.righthandside.zeroth} with \eqref{eq.I1}-\eqref{eq.I6} and dividing by $|B_j| \sim r^{n}$, we get
    	\begin{align*}
    		\int_{I_{j+1}}& \fint_{B_{j+1}} |\nabla \tilde{w}_j(x,t)|^2 dx dt + \sup_{t\in I_{j+1}} \fint_{B_{j+1}} \tilde{w}_j^2(x,t) dx + \int_{I_{j+1}} \int_{B_{j+1}} \fint_{B_{j+1}} \frac{|\tilde{w}_j(x,t)-\tilde{w}_j(y,t)|^2}{|x-y|^{n+2s}} dxdydt \\
    		\leq& \, \frac{C}{r^2(1-\tau)^2} \left( 2^{2j} + 2^{2j} r^{2-2s} + \frac{2^{(n+2s+1)j}}{\tau^{2s} r^{2s-2} (1-\tau)^{n+2s-2}} + r^2 (1-\tau)^2 + 2^{j} r^2(1-\tau)^2  \right) \int_{I_j} \fint_{B_j} w_j^2(x,t) dx dt.
    	\end{align*}
    	Finally, manipulating the term inside the parenthesis, bearing in mind that $0<\tau,r<1$:
    	\begin{align*}
    		2^{2j} &+ 2^{2j} r^{2-2s} + \frac{2^{(n+2s+1)j}}{\tau^{2s} r^{2s-2}(1-\tau)^{n+2s-2}} + r^2 (1-\tau)^2 + 2^{j} r^2(1-\tau)^2 \\
    		\leq& \; 2^{2j} \cdot 4 + \frac{2^{(n+2s+1)j}}{\tau^{2s} r^{2s-2}(1-\tau)^{n+2s-2}} \\
    		\leq& \;  2^{2j+2} + \frac{2^{(n+2s+1)j}}{\tau^2(1-\tau)^{n+2s-2}},
    	\end{align*}
    	which leads to the desired result. \qed
    \end{proof}
    
    \begin{lemma} \label{lemma.final.Cacciopoli.recursion}
    	Let $v$ be the classical solution of equation \eqref{eq.zeroth.order}, let $\tilde{T} \in (0,T^\prime)$ and $\kappa=1+2/n$. Let $(x_0,t_0) \in \Omega \times (0,\tilde{T})$, $0<r<1$, and $Q_r = B_r(x_0)\times (t_0-r^2,t_0)$ such that $\overline{B}_r(x_0) \subset \Omega$ and $[t_0-r^2,t_0] \subset (0,\tilde{T})$. Then it holds that
    	\begin{align*}
    		\int_{I_{j+1}} & \fint_{B_{j+1}} w_{j+1}^2(x,t) dx dt \\
    		\leq& \, \frac{C 2^{(n+4)\kappa j}}{r^{\frac{4}{n\kappa}}(1-\tau)^2 \tilde{k}^{\frac{4}{n\kappa}}} \left( 1 + \frac{1}{\tau^2 (1-\tau)^{(n+2s-2)}} \right) \left( \int_{I_j} \fint_{B_j} w_j^2(x,t) dx dt \right)^{1+\frac{2}{\kappa n}},
    	\end{align*}
    	for every $j\in\mathbb{N}$, where $C=C(n,s,\|a\|_{L^\infty(\Omega\times[0,\tilde{T}])})$.
    \end{lemma}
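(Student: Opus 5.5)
We follow the standard De Giorgi step: combine the parabolic Sobolev inequality of Lemma \ref{lemma.Linfty.estimate} with the Caccioppoli bound of Lemma \ref{lemma.Cacciopoli.improved.zeroth}, then use \eqref{trivial.but.useful} to trade powers of $w_j$ for powers of $\tilde{k}$ and $2^j$.

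First, since $k_{j+1}\geq \tilde{k}_j$, we have $w_{j+1}\leq \tilde{w}_j$ pointwise. Hence
\begin{equation*}
\int_{I_{j+1}}\fint_{B_{j+1}} w_{j+1}^2 \leq \int_{I_{j+1}}\fint_{B_{j+1}} \tilde{w}_j^2 \mathbf{1}_{\{\tilde{w}_j> k_{j+1}-\tilde{k}_j\}}.
\end{equation*}
We would instead apply H\"older's inequality with exponents $\kappa$ and $\kappa/(\kappa-1)$ to $w_{j+1}^2=w_{j+1}^2\cdot\mathbf{1}_{\{w_{j+1}>0\}}$. This gives
\begin{equation*}
\int_{I_{j+1}}\fint_{B_{j+1}} w_{j+1}^2 \leq \Big(\int_{I_{j+1}}\fint_{B_{j+1}} \tilde{w}_j^{2\kappa}\Big)^{1/\kappa}\Big(\int_{I_{j+1}}\fint_{B_{j+1}}\mathbf{1}_{\{w_{j+1}>0\}}\Big)^{1-1/\kappa}.
\end{equation*}
The measure factor is controlled by Chebyshev together with \eqref{trivial.but.useful}. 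On $\{w_{j+1}>0\}$ we have $w_j\geq k_{j+1}-k_j=2^{-j-1}\tilde{k}$, so
\begin{equation*}
\int_{I_{j+1}}\fint_{B_{j+1}}\mathbf{1}_{\{w_{j+1}>0\}}\leq C\,2^{2j}\tilde{k}^{-2}\int_{I_j}\fint_{B_j}w_j^2,
\end{equation*}
using $|B_j|\sim|B_{j+1}|$.

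Next we apply Lemma \ref{lemma.Linfty.estimate} to $\tilde{w}_j$ on $B_{j+1}\times I_{j+1}$. This bounds $\int\fint\tilde{w}_j^{2\kappa}$ by
\begin{equation*}
\Big(r_{j+1}^2\int\fint|\nabla\tilde{w}_j|^2+\int\fint\tilde{w}_j^2\Big)\Big(\sup_t\fint\tilde{w}_j^2\Big)^{2/n}.
\end{equation*}
Each factor is bounded through Lemma \ref{lemma.Cacciopoli.improved.zeroth} by
\begin{equation*}
A_j:=\frac{C}{r^2(1-\tau)^2}\Big(2^{2j+2}+\frac{2^{(n+2s+1)j}}{\tau^2(1-\tau)^{n+2s-2}}\Big)\int_{I_j}\fint_{B_j}w_j^2 .
\end{equation*}
In the gradient term the factor $r_{j+1}^2\leq r^2$ cancels the $r^{-2}$. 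The term $\int\fint\tilde{w}_j^2$ is trivially at most $\int\fint w_j^2$, which is dominated by $A_j$ up to the same kind of factor. So the product is at most $C\,A_j^{\kappa}$ times powers of $r$: the first factor contributes no negative power of $r$, and the second contributes $r^{-4/n}$.

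Taking the $1/\kappa$ power and multiplying by the measure factor raised to $1-1/\kappa=2/(n\kappa)$, the exponent of $\int\fint w_j^2$ becomes $1+2/(n\kappa)$. The $\tilde{k}$ power is $\tilde{k}^{-4/(n\kappa)}$, and the $r$ power is $r^{-4/(n\kappa)}$, which matches the statement.

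The main obstacle is the bookkeeping of constants so that they collapse into the stated form. The powers of $2^j$ come to at most $2^{(n+2s+1)j}\cdot 2^{4j/(n\kappa)}$ raised appropriately; since $s<1$, everything is dominated by $2^{(n+4)\kappa j}$, and we will simply bound crudely. The $\tau$ and $(1-\tau)$ factors appear raised to the power $1$ overall, because the Caccioppoli factor enters as $A_j^{\kappa\cdot 1/\kappa}$ after distributing. This yields $(1-\tau)^{-2}\big(1+\tau^{-2}(1-\tau)^{-(n+2s-2)}\big)$ once we use $2^{2j+2}+2^{(n+2s+1)j}X\leq C2^{(n+3)j}(1+X)$.

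The step needing the most care is checking that the powers of $r$ from $r_{j+1}^2$ versus $r^2$, and from the $\sup$ factor, indeed yield exactly $r^{-4/(n\kappa)}$ rather than an extra $r^{-2}$. If a residual $r^{-2}$ survives, we will absorb it using $r<1$ only in the favorable direction, or re-normalize the time integrals (they are plain $\int_{I_j}$, of length $\sim r^2$) to compensate.
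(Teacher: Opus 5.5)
Your proposal is correct and follows essentially the same route as the paper's proof: H\"older with exponents $\kappa$ and $\kappa/(\kappa-1)$, a Chebyshev bound on the level-set measure via \eqref{trivial.but.useful}, and Lemma \ref{lemma.Linfty.estimate} with both the gradient and the supremum factor controlled by Lemma \ref{lemma.Cacciopoli.improved.zeroth}. Using the set $\{w_{j+1}>0\}$ where the paper uses $\{v\geq \tilde{k}_j\}$ changes nothing. Your own bookkeeping already shows that $r^2 A_j$ carries no power of $r$, so no residual $r^{-2}$ arises, and the closing contingency is unnecessary (such a factor could not be absorbed using $r<1$ anyway).
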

    
    \begin{proof}
    	First, we use Hölder's inequality to get that
    	\begin{align} \label{eq.holder.inequality}
    		\nonumber \int_{I_{j+1}} & \fint_{B_{j+1}} w_{j+1}^2(x,t) dx dt \leq \int_{I_{j+1}} \fint_{B_{j+1}} \tilde{w}_j^2(x,t) dx dt \\
    		\leq& \; \left(\int_{I_{j+1}} \fint_{B_{j+1}} \tilde{w}_j^{2\kappa}(x,t) dx dt \right)^{\frac{1}{\kappa}} \left(\int_{I_{j+1}} \fint_{B_{j+1}} \chi_{\{v \geq \tilde{k}_j\}}(x,t) dx dt \right)^{1-\frac{1}{\kappa}}.
    	\end{align}
    	Noticing that $\chi_{\{v \geq \tilde{k}_j\}}(x,t) = (\tilde{w}_j(x,t))^0$, we can use \eqref{trivial.but.useful} to obtain
    	\begin{equation} \label{eq.characteristic.inequality}
    		\int_{I_{j+1}} \fint_{B_{j+1}} \chi_{\{v \geq \tilde{k}_j\}}(x,t) dx dt \leq \frac{C 2^{2j}}{\tilde{k}^2} \int_{I_{j}} \fint_{B_{j}} w_j^2 (x,t) dx dt.
    	\end{equation}
    	
    	Now, using Lemma \ref{lemma.Linfty.estimate} and Lemma \ref{lemma.Cacciopoli.improved.zeroth} and carefully performing all calculations, we can estimate
    	\begin{align*}
    		\int_{I_{j+1}} & \fint_{B_{j+1}} \tilde{w}_j^{2\kappa} (x,t) dx dt \\
    		\leq& \; C \left(r^2 \int_{I_{j+1}} \fint_{B_{j+1}} |\nabla \tilde{w}_j(x,t)|^2 dx dt + \int_{I_{j+1}} \fint_{B_{j+1}} |\tilde{w}_j(x,t)|^2 dx dt \right)\left( \sup_{t\in I_{j+1}} \fint_{B_{j+1}} |\tilde{w}_j (x,t)|^2 dx \right)^{2/n} \\
    		\leq& \; \frac{C 2^{(n+4)\kappa j}}{r^{\frac{4}{n}} (1-\tau)^{2\kappa}} \left(1+\frac{1}{\tau^{2\kappa} (1-\tau)^{(n+2s-2)\kappa}} \right) \left(\int_{I_j} \fint_{B_j} w_j^2(x,t) dx dt \right)^{\kappa}.
    	\end{align*}
    	Combining this last inequality with \eqref{eq.holder.inequality} and \eqref{eq.characteristic.inequality}, we obtain the desired result. \qed
    \end{proof}
    
    Finally, we will make use of the following lemma from \cite{D1993} regarding sequences of positive numbers
    
    \begin{lemma} \label{lemma.sequence}
    	Let $\{Y_j\}_{j=0}^\infty$ be a sequence of positive numbers such that
    	\begin{equation*}
    		Y_0 \leq K^{-\frac{1}{\delta}} b^{-\frac{1}{\delta^2}}, \; \text{ and } \; Y_{j+1} \leq K b^j Y_j^{1+\delta}, \;\; j=0,1,2\dots,
    	\end{equation*}
    	for some $K,b>1$ and $\delta>0$. Then we have that $\lim_{j\to\infty} Y_j = 0$.
    \end{lemma}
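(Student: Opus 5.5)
We argue by induction on $j$, with the contradiction step in the middle, showing that $Y_j \le Y_0 b^{-j/\delta}$ for every $j\ge 0$. Once this bound is available the conclusion is immediate: $b>1$ and $\delta>0$ give $b^{-j/\delta}\to 0$, and the $Y_j$ are positive, so $Y_j\to 0$.

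The base case $j=0$ holds trivially. For the inductive step, assume $Y_j \le Y_0 b^{-j/\delta}$. The recursion then gives
\begin{equation*}
Y_{j+1} \le K b^j Y_j^{1+\delta} \le K b^j Y_0^{1+\delta} b^{-j(1+\delta)/\delta} = K Y_0^{\delta}\, b^{-j/\delta}\, Y_0 .
\end{equation*}
To close the induction we need the right-hand side to be at most $Y_0 b^{-(j+1)/\delta}$. Since $b^{-j/\delta}=b^{-(j+1)/\delta}b^{1/\delta}$, this is equivalent to $K Y_0^\delta b^{1/\delta}\le 1$, that is,
\begin{equation*}
Y_0 \le K^{-1/\delta} b^{-1/\delta^2},
\end{equation*}
which is exactly the hypothesis on $Y_0$. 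The induction therefore closes and the claimed bound holds for all $j$.

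The only delicate point is choosing the right decay rate $b^{-j/\delta}$, which is dictated by the exponent bookkeeping: the factor $b^j$ coming from the recursion must be cancelled by the gain $b^{-j\delta/\delta}=b^{-j}$ produced when the inductive bound is raised to the power $1+\delta$. Once this ansatz is fixed, everything reduces to the one-line verification above. As a consequence, in the application to Theorem \ref{teo.l2.linfty} one takes $Y_j=\int_{I_j}\fint_{B_j} w_j^2\,dx\,dt$ from Lemma \ref{lemma.final.Cacciopoli.recursion}. The recursion there has $\delta = 2/(\kappa n)$, $b = 2^{(n+4)\kappa}$, and $K$ the prefactor, which is proportional to $\tilde k^{-4/(n\kappa)}$. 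One then chooses $\tilde k$ large enough that the initial condition on $Y_0$ holds, and concludes that $v\le \tilde k$ on $Q_{\tau r}$.
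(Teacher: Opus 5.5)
Your proof is correct: the induction $Y_j \le Y_0 b^{-j/\delta}$ closes exactly because the required condition $K Y_0^{\delta} b^{1/\delta} \le 1$ is the hypothesis on $Y_0$, and this is the standard argument of \cite{D1993}, which the paper cites instead of proving the lemma. The only blemish is the phrase ``with the contradiction step in the middle'', which is spurious since no contradiction is used; this does not affect the argument.
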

    
    We can now prove the main theorem of this section.
    
    \begin{proof}[Proof of Theorem \ref{teo.l2.linfty}]
		Let us denote
		\begin{equation*}
			Y_j = \fiint_{Q_j} (v-k_j)^2_+ dx dt, \;\; j=0,1,2\dots
		\end{equation*}
		This transforms the result of Lemma \ref{lemma.final.Cacciopoli.recursion} into
		\begin{equation*}
			Y_{j+1} \leq C \frac{2^{(n+4)\kappa j}}{\tilde{k}^{\frac{4}{n\kappa}}} \left( \frac{1}{(1-\tau)^2} + \frac{1}{\tau^2(1-\tau)^{n+2s}} \right) Y_{j}^{1+\frac{2}{n\kappa}},
		\end{equation*}
		for any $j\in\mathbb{N}$. We can now apply Lemma \ref{lemma.sequence} if we choose
		\begin{align*}
			\tilde{k} \geq \max\Bigg\{ & \text{Tail}_{\infty}(v,x_0,\tau r, t_0-r^{2},t_0), C r^{-n/2-1} \left(\frac{1}{(1-\tau)^{\frac{n}{2}+1}} + \frac{1}{\tau^{\frac{n}{2}+1} (1-\tau)^{(\frac{n}{2}+s)(\frac{n}{2}+1)}} \right) \|v\|_{L^2(Q_r)} \Bigg\},
		\end{align*}
		so that we can conclude $\lim_{j\to\infty} Y_j = 0$. This means that $\|v\|_{L^\infty (Q_{\tau r})} \leq \tilde{k}$ for any applicable $\tilde{k}$. Therefore, choosing the smallest one possible, one gets:
		\begin{align} \label{eq.infinity.1}
			\|v\|_{L^\infty (Q_{\tau r})} \leq& \; \text{Tail}_{\infty}(v,x_0,\tau r, t_0-r^{2},t_0) + C r^{-n/2-1} \left(\frac{1}{(1-\tau)^{\frac{n}{2}+1}} + \frac{1}{\tau^{\frac{n}{2}+1} (1-\tau)^{(\frac{n}{2}+s)(\frac{n}{2}+1)}} \right) \|v\|_{L^2(Q_r)}
		\end{align}

    Now, following the same argument of \eqref{eq.tailestimate1}, we get:
    \begin{align} \label{eq.tailestimate}
    	\nonumber \text{Tail}_\infty & (v,x_0,\tau r,t_0-r^{2},t_0) \\
    	\nonumber \leq& \; (\tau r)^{2s} \sup_{t\in(t_0-r^{2},t_0)} \left(  \int_{\mathbb{R}^n\setminus B_{\tau r}(x_0)}
    	\hspace{-.5cm} |x_0-y|^{-2n-4s}  dy \right)^{1/2} \left(  \int_{\Omega\setminus B_{\tau r}(x_0)}
    	\hspace{-.5cm} |v(y,t)|^2 dy \right)^{1/2} \\
    	\leq& \; (\tau r)^{2s} \sup_{t\in(t_0-r^{2},t_0)} C (\tau r)^{-\frac{n}{2}-2s} \|v(\cdot,t)\|_{L^2(\Omega)} \\
    	\nonumber \leq& \; C (\tau r)^{-\frac{n}{2}} \|v\|_{L^\infty([t_0-r^2,t_0], L^2(\Omega))},
    \end{align}
    where $C=C(n,s)$. Then, merging \eqref{eq.infinity.1} and \eqref{eq.tailestimate}, we get		
		\begin{align*}
		\|v\|_{L^\infty(Q_{\tau r})} \leq& \; \; C((\tau r)^{-\frac{n}{2}}\|v\|_{L^\infty([t_0-r^2,t_0], L^2(\Omega))} +r^{-\frac{n}{2}-1} \tau^{-\frac{n}{2}-1} (1-\tau)^{-(\frac{n}{2}+s)(\frac{n}{2}+1)} \|v\|_{L^2(Q_r)}) \\
			\leq& \; C((\tau r)^{-\frac{n}{2}} + r^{-\frac{n}{2}} \tau^{-\frac{n}{2}-1} (1-\tau)^{-(\frac{n}{2}+s)(\frac{n}{2}+1)}) \|v\|_{L^\infty([t_0-r^2,t_0],L^2(\Omega))} \\
			\leq& \; C r^{-\frac{n}{2}} \tau^{-\frac{n}{2}-1} (1-\tau)^{-(\frac{n}{2}+s)(\frac{n}{2}+1)}\|v\|_{L^\infty([t_0-r^2,t_0], L^2(\Omega))}
		\end{align*}
		where $C=C(n,s,t_0,\|a\|_{L^\infty(\Omega\times[0,\tilde{T}])})$, and the result is proved. \qed
	\end{proof}
	
	\section{Higher interior regularity of solutions: proof of Theorem \ref{teo.higherregularity}} \label{sec.interiorregularity}
    
    Let us prove that $u_t$ is also a Hölder continuous function. First, define the quantity
    \begin{equation*}
    	v_h(x,t) = \frac{u(x,t+h)-u(x,t)}{h}.
    \end{equation*}
    This has, for any fixed $h>0$, the same regularity properties as $u$ and it converges strongly to $u_t$ in $C([t_1,t_2], L^2(\Omega))$ for every $[t_1,t_2] \subset (0,T)$. Furthermore, it is also true that
    \begin{equation} \label{eq.vhuniformbound}
    \|v_h(\cdot,t)\|_{L^2(\Omega)}^2 \leq \frac{1}{h}\int_\Omega \int_{t}^{t+h} u_t(x,s)^2 ds dx = \frac{1}{h} \int_t^{t+h} \|u_t(\cdot,s)\|^2_{L^2(\Omega)} ds \leq \|u_t\|^2_{L^\infty([t,t+h],L^2(\Omega))},
    \end{equation}
    which means that that the $L^2(\Omega)$ norm of $v_h$ is bounded independently of $h$ for every compact subinterval $[t_1,t_2] \subset (0,T-h)$.
    
    Then, notice that $v_h$ is the solution to the following equation:
	\begin{equation} \label{1.2}
    \begin{cases}
        \displaystyle (v_h)_t(x,t)= \mathcal{L} v_h(x,t)+f^\prime(\xi(x,t))v_h(x,t), & (x,t)\in\Omega\times(0,T-h), \\
        v_h(x,0)=\frac{u(x,h)-u_0(x)}{h}, & x\in\Omega,\\
        v_h(x,t) = 0, & (x,t)\in\mathbb{R}^n \setminus \Omega \times [0,T-h),
    \end{cases}
    \end{equation}
    where $\xi(x,t)$ is a function that, for every fixed $(x,t)$, is between $u(x,t+h)$ and $u(x,t)$. The most important fact about this is that $\xi$ is bounded away from $\alpha$ independently of $h$ if we work in an interior cylinder, which means that $f^\prime(\xi(x,t))$ is an $L^\infty_{loc}([0,T-h), L^\infty(\Omega))$ function uniformly in $h$.
    
    Choose any $\tilde{T} \in (0,T-h)$ and fix some $(x_0,t_0) \in \Omega \times (0,\tilde{T})$. We notice that \eqref{1.2} is of the form \eqref{eq.zeroth.order}, taking $a(x,t) = f^\prime(\xi(x,t))$. Then we can take some $\sigma\in(0,1)$ and $R\in(0,1)$ such that $\overline{B}_R \subset \Omega$ and $[t_0-R^2,t_0] \subset (0,\tilde{T})$, and apply Theorem \ref{teo.l2.linfty} and \eqref{eq.vhuniformbound} to get
    \begin{equation} \label{eq.appliedl2linfty}
    	\|v_h\|_{L^\infty(Q_{\sigma R})} \leq C R^{-\frac{n}{2}} \sigma^{-\frac{n}{2}-1}(1-\sigma)^{-(\frac{n}{2}+s)(\frac{n}{2}+1)} \|u_t\|_{L^\infty([t_0-R^2,t_0], L^2(\Omega))},
    \end{equation}
    which is a uniform bound with respect to $h$. At the same time, \eqref{1.2} is of the form we studied in Section \ref{sec.campanato}, taking $f(v_h(x,t)) = a(x,t)v_h(x,t)$. We may then repeat the whole reasoning up to equation \eqref{eq.holder.4}. In this step, we now need to find a bound that is uniform with respect to $h$. Fix some $R \in (0,1)$ such that $\overline{B}_R \subset \Omega$ and $[t_0-R^2,t_0] \subset (0,\tilde{T})$, and take $\sigma= 1/2$. Then, using  \eqref{eq.vhuniformbound} and \eqref{eq.appliedl2linfty}, we know that for every $r< R/2$,
    \begin{align*}
    	 \Bigg( \fiint_{Q_r} & |a(x,t)|^2|v_h(x,t)|^2 dx dt \Bigg)^{1/2} \\
    	 \leq& \; \|a\|_{L^\infty(Q_r)} \|v_h\|_{L^\infty(Q_r)} \leq \|a\|_{L^\infty(\Omega \times [0,t_0])} \|v_h\|_{L^\infty(Q_{R/2})} \\
    	 \leq& \; C \|a\|_{L^\infty(\Omega \times[0,\tilde{T}])} R^{-\frac{n}{2}} \|u_t\|_{L^\infty((0,\tilde{T}),L^2(\Omega))}.
    \end{align*}
    This way we have found a bound for the reaction term average that is independent of $\tau$, $r$, the center of the cylinder $(x_0,t_0)$ and $h$. We can then proceed with the rest of the proof without issue and find that for every $\alpha_0 \in (0,1)$ and $(x_0,t_0) \in \Omega \times (0,T-h)$, there exists some $r_\ast = r_\ast(n,s,\alpha_0,\tilde{T}) \in (0,1)$ such that for every $r\leq r_\ast$,
    \begin{equation}
    	\fiint_{Q_r} |v_h(x,t)-(v_h)_{Q_r}|^2 dx dt \leq C^\prime r^{2\alpha_0},
    \end{equation}
    with $C^\prime = C^\prime (n,s,\alpha_0, r_\ast,\|a\|_{L^\infty(\Omega\times[0,\tilde{T}])},\|u_t\|_{L^2(\Omega \times (0,\tilde{T}))},\|u_t\|_{L^\infty((0,\tilde{T}),L^2(\Omega))})$. 
    
    Finally, and thanks to the fact that the constant $C^\prime$ does not depend on $h$, we conclude, taking $h\to0$ and using the strong convergence of $v_h$ to $u_t$:
    \begin{equation} \label{final.campanato.velocity}
    	\fiint_{Q_r} |u_t(x,t) -(u_t)_{Q_r}|^2 dx dt \leq C^\prime r^{2\alpha_0}.
    \end{equation}
    Then $u_t$ is locally Hölder continuous for any exponent $\alpha_0 \in (0,1)$ in $\Omega \times (0,\tilde{T})$. At the same time, thanks to Theorem \ref{teo.holder} we know that $u$ is $C^{\alpha_0}_{loc}$ and, thanks to the regularity of $f$, it is clear that $f(u)$ is $C^{\alpha_0}_{loc}$ as well. This is true for every $\tilde{T} \in (0,T)$, so $u_t \in C^{\alpha_0}_{loc}(\Omega \times (0,T))$.
    
    Finally, from \eqref{1.1} we know that 
    \begin{equation*}
    	-\Delta u(x,t) + (-\Delta)^s u(x,t) = f(u(x,t)) - u_t(x,t),
    \end{equation*}
    a.e. $x\in\Omega$ for every $t\in(0,T)$. The right hand side of this expression is in particular $C^{\alpha_0}_{loc}(\Omega)$ for every fixed $t\in(0,T)$. Then,  using \cite[Theorem 1.5]{SVWZ2025}, we conclude that $u(\cdot,t)\in C^{2,\alpha_0}_{loc}(\Omega)$ for every $t\in(0,T)$. \qed 
    
    \section*{Acknowledgements}

	Both authors were partially supported by 
	ANII (Agencia Nacional de Investigaci\'on 
	e Innovaci\'on, Uruguay) 
	grant FCE\_3\_2024\_1\_181302.
	
	S. Junquera was also partially supported by grants PID2022-137074NBI00 and PID2025-168009NB-I00 funded by
MCIU/AEI/10.13039/501100011033, Spain, and by grants FPU21/05580 and EST26/00044 from the Ministry
of Science, Innovation and Universities.
	
	L.\,M.\ Del Pezzo 
	was also partially supported 
	by grant PID2023-148028NB-I00, 
	funded by \break {MICIU/
	AEI/10.13039/501100011033 and by ERDF/EU}.

\printcredits

\bibliographystyle{cas-model2-names}

\bibliography{interior_regularity_bibliography}

\end{document}